\definecolor{aleacolor}{rgb}{0.16,0.59,0.78}
\renewcommand{\cite}{\citet}
\theoremstyle{plain}
\newtheorem{theorem}{Theorem}[section]                                          
\newtheorem{proposition}[theorem]{Proposition}                          
\newtheorem{lemma}[theorem]{Lemma}
\newtheorem{corollary}[theorem]{Corollary}
\theoremstyle{definition}
\newtheorem{definition}[theorem]{Definition}
\theoremstyle{remark}
\newtheorem{remark}[theorem]{Remark}
\makeatletter \@addtoreset{equation}{section} \makeatother
\newcommand{\dsp}{\displaystyle}
\newcommand{\bd}{\begin{displaymath}}
\newcommand{\be}{\begin{equation}}
\newcommand{\beq}{\begin{eqnarray}}
\newcommand{\ba}{\begin{array}}
\newcommand{\ed}{\end{displaymath}}
\newcommand{\ee}{\end{equation}}
\newcommand{\eeq}{\end{eqnarray}}
\newcommand{\ea}{\end{array}}
\newcommand{\espace}{\mbox{ }}
\newcommand{\eps}{\varepsilon}
\newcommand{\N}{{\mathbb N}}
\newcommand{\Z}{{\mathbb Z}}
\newcommand{\Q}{{\mathbb Q}}
\newcommand{\R}{{\mathbb R}}
\newtheorem{notation}[theorem]{Notation}
\newenvironment{proof}[2]{\espace\\{\em Proof of #1 \ref{#2}.}}{\hfill\mbox{$\square$}}
\begin{document}

\title[Shape theorem for an epidemic model 
in $d\ge 3$]{A shape theorem for an epidemic model in dimension $d\ge 3$}

\author{E. D. Andjel}
\author{N. Chabot}
\author{E. Saada}
\address{Aix-Marseille Universit\'e,
 CNRS, Centrale Marseille, I2M, UMR 7373,\newline
 Technop\^ole Ch\^ateau-Gombert,
39 rue Fr\'ed\'eric Joliot-Curie,\newline
13453 Marseille Cedex 13, France.\newline
Visiting IMPA, Rio de Janeiro, Brasil.}
\email{enrique.andjel@univ-amu.fr}
%\urladdr{\url{http://}}
\thanks{E. D. Andjel was partially supported by
PICS no. 5470, by CNRS and by CAPES.}
\address{Lyc\'ee Lalande,\newline
16 rue du Lyc\'ee,\newline
01000 Bourg en Bresse, France.}
\email{nicolas.chabot@cegetel.net}
%\urladdr{\url{http://}}
\address{CNRS, UMR 8145, MAP5,
Universit\'e Paris Descartes, Sorbonne Paris Cit\'e,\newline
45 rue des Saints-P\`eres,\newline
75270 Paris cedex 06, France.}
\email{Ellen.Saada@mi.parisdescartes.fr}
\urladdr{\url{http://www.math-info.univ-paris5.fr/\~esaada/}}
\thanks{E. Saada was partially supported by
PICS no. 5470}
\subjclass[2000]{60K35, 82C22.} 
\keywords{Shape theorem, epidemic model,
first passage locally dependent percolation,
 dynamic renormalization.}
\begin{abstract}
 We prove a shape theorem  for the set of infected individuals
in a spatial epidemic model with 3 states (susceptible-infected-recovered) 
on $\Z^d,d\ge 3$, when there is no extinction of the infection. 
For this, we derive percolation
estimates (using dynamic renormalization techniques) for  
a locally dependent random graph in correspondence with the epidemic model.
\end{abstract}
\maketitle
\section{Introduction}\label{sec:intro}
\cite{MR0496851,MR0480208} has introduced  a stochastic spatial 
 epidemic model on $\Z^d$ called   ``general
 epidemic model'', 
describing the evolution of individuals submitted to infection by contact
contamination of infected neighbors. More precisely, on each site of $\Z^d$ 
there is an individual who can be healthy, infected, or immune. 
At time 0, there is an infected 
individual at the origin, and all other sites are occupied by healthy 
individuals. 
Each  infected individual emits germs according to a Poisson process, 
it stays infected for a random time, then it recovers and becomes 
immune to further infection.
A germ emitted from $x\in\Z^d$ goes to one of the neighbors $y\in\Z^d$ of 
$x$ chosen at random. If the individual at $y$ is healthy
then it becomes
infected and begins to emit germs; if this individual is infected or 
immune, nothing happens. The germ emission processes and the durations of infections of 
different individuals are mutually independent.\\ \\
After Mollison's papers, this epidemic model has given rise to many 
studies, and  other models that 
are variations
of this ``SIR'' (Susceptible-Infected-Recovered) structure
have been introduced. 
A first direction to study such models 
is whether  the 
different states asymptotically survive or not, according 
to the values of the involved parameters (e.g. the infection and 
recovery rates). A second direction 
 is the obtention of a shape 
theorem for the asymptotic behavior of infected individuals, 
when there is no extinction  of the infection (throughout this paper, 
``extinction'' is understood as ``extinction of the infection'').  \\ \\
 \cite{K} 
proved that for $d=1$, extinction is almost sure for
the general epidemic model. 
 \cite{MR0675138}
has studied the threshold behavior of this model in dimension $d\ge 2$.
He proved that the process has a critical infection rate below which extinction is
almost certain, and above which there is survival,  thus  closing
this question.  
His work (as well as the following ones on this model)
is based on the analysis of an oriented percolation model, 
that he calls a ``locally dependent random graph'', in correspondence 
with the epidemic model.
See also the related paper \cite{MR0766826}.\\ \\
 In the general epidemic model on $\Z^2$, when there is no extinction,
\cite{MR0978353} have derived a shape theorem 
for the set of infected  or immune  individuals 
 when the contamination rule is nearest neighbor, 
 and the durations of infection are positive with a positive probability.
A second moment is required for those durations only to localize the infected but not
immune individuals within the shape obtained.  
This result was extended to a finite range 
contamination rule by  \cite{MR1245289}. The proofs in \cite{MR0978353,MR1245289} 
are based on the correspondence with the locally dependent random graph;
 they refer to
\cite{MR0624685}, which deals with first passage percolation (see also \citealp{MR0876084}),
 including the possibility of infinite passage times. 
They rely on  circuits to delimit and control
open paths. This technique  cannot
be used for dimension greater than $2$.\\ \\ 
 There was no investigation of the shape theorem
for the general epidemic model in higher dimensions, 
until \cite[unpublished]{C}  proved it 
for a nearest neighbor contamination rule in dimension $d\ge 3$,
 with the restriction to deterministic 
durations of infection: in that case the oriented percolation model is comparable
to a non-oriented Bernoulli percolation model (as noticed in \citealp{MR0675138}, the
case with constant durations of infection  in the
epidemic model  is the only one where the edges are independent
 in the percolation model).
 Analyzing the  epidemic   model for  
$d\ge 3$  required  heavier techniques than before: \cite{C} used 
results from \cite{MR1404543} and \cite{MR1068308} 
for non-oriented Bernoulli percolation to derive,  for the percolation model,
 exponential estimates
in the subcritical case on the one hand, and estimates using percolation on slabs
on the other hand. To apply those results to the epidemic model required to find an 
alternative,  in the percolation model,
 to the neighborhoods (for points in 
$\Z^2$) delimited by circuits of \cite{MR0978353}. \cite{C} introduced new types
of random neighborhoods characterized by the properties of the  
percolation model  in dimension $d\ge 3$.  \\ \\ 
In the present work, we complete 
the derivation of the shape theorem 
for  the set of  infected  or immune  individuals 
in the  general epidemic model with a nearest neighbor 
contamination rule in dimension $d\ge 3$, by proving it for random
durations of infection,  which are positive with a positive probability. 
There, the comparison with non oriented 
percolation done in \cite{C} is no longer valid,  and we have to deal
with an oriented dependent percolation model, with possibly infinite passage times.  
  Our approach consists in adapting the dynamic renormalization techniques 
 of \cite{MR1068308} without calling on
\cite{MR1404543}. This simplifies the paper, but  we obtain sub-exponential
estimates (which suffice for our purposes), instead of exponential estimates as in the paper \cite{C}.
 With this in hand, it is then possible to catch hold of the skeleton of the latter:
 We  take advantage of the random neighborhoods  introduced there
 (they turn out to be still valid in our setting) 
to derive the shape theorem.  Similarly to \cite{MR0978353}, we require
a moment of order $d$
of the durations of infection only to localize the infected but not
immune individuals within the shape obtained.  \\ \\
Let us mention two recent works, \cite{CT} and \cite{MR2923190},
on shape theorems for (or related to)
first passage (non dependent) percolation on $\Z^d$ with various assumptions on the
passage times, for which the approach in \cite{MR0876084} is extended.  \\ \\
Our paper is organized as follows. 
In Section \ref{sec:model} we define the  general epidemic model, 
the locally dependent random graph, we explicit their link, 
and we state the shape theorem 
(Theorem \ref{th:shape}).  Section \ref{sec:appliquer_GM} is devoted to
the necessary percolation estimates on the locally dependent 
random graph needed for Theorem \ref{th:shape}. We prove the latter
in Section \ref{sec:Nicolas}, thanks to an analysis of the  travel   
times for the epidemic. In Appendix \ref{sec:appendix}, we prove all the results 
of Section \ref{sec:appliquer_GM} requiring  dynamical renormalisation techniques. 
\section{The set-up: 
definitions and results}\label{sec:model}
Let $d\ge 3$. The epidemic model on $\Z^d$ is represented 
by a Markov process $(\eta_t)_{t\ge 0}$
of state space $\Omega=\{0,i,1\}^{\Z^d}$.  
The value $\eta_t(x)\in\{0,i,1\}$ is the state 
of  the individual located at site  $x$ at time $t$: 
state 1 if the individual is healthy (but not immune), 
state $i$ if it is infected, or state 0
if it is immune.  We will shorten this in ``site $x$ is healthy, infected or immune''. 
 We assume that at time 0, the origin $o=(0,\ldots,0)$ 
is the only infected site while all other sites are healthy. 
That is, the initial configuration $\eta_0$ is given by
\begin{equation}\label{eq:IC}
\eta_0(o)=i,\qquad\forall\, z\not= o, \,\eta_0(z)=1.
\end{equation}
We now describe how the epidemic propagates, 
then  we introduce  
a related  locally dependent oriented bond percolation model on $\Z^d$, 
and finally we link the two models. 
We assume that all the processes and random variables we deal with 
are defined on a common probability space,
whose probability is denoted by $P$, and the corresponding expectation by $E$. \par
For $x=(x_1,\ldots,x_d)\in\Z^d, y=(y_1,\ldots,y_d)\in\Z^d$,
 $\| x-y\|_1=\sum_{i=1}^d |x_i-y_i|$ denotes the $l^1$ norm of 
$x-y$,
 and we write $x\sim y$ if $x,y$ are neighbors, that is $\| x-y\|_1=1$. 
 Let $(T_x, e(x,y): x,y \in \Z^d,  x\sim y)$ be independent random variables 
such that \par
\noindent
1) the $T_x$'s are  nonnegative  with a common distribution  
satisfying  $P(T_x=0)<1$;\par
\noindent 
2) the $e(x,y)$'s are exponentially distributed with a parameter $\lambda>0$. \\
 We stress that the only assumption on the $T_x$'s is that their
distribution is not a Dirac mass on 0. They could be infinite,
or without any finite moment.
 We define 
\begin{equation}\label{def:open-closed-bonds}
X(x,y)=
\begin{cases}
1 &\text{if $e(x,y)<T_x$;}\\
0   &\text{otherwise.}
\end{cases}
\end{equation}\\ 
 In the epidemic model, 
for a given infected individual $x$,  $T_x$ denotes the amount of time $x$ stays infected; 
during this time of infection, $x$ emits germs according to a Poisson process 
of parameter $2d\lambda$; when $T_x$ is over, $x$ recovers and its state becomes 0 forever. 
An emitted germ from $x$  at some time $t$ reaches  $y$ (say), one of the $2d$ 
neighbors of $x$,  uniformly. If this neighbor $y$ 
 is in state 1 at time $t^-$, it immediately changes to state $i$ at time $t$, 
 from $t$ begins the duration of infection $T_y$, and $y$ begins 
to emit germs according to the same rule as $x$ did;  if this neighbor $y$ 
is in state 0 
or $i$ at time $t^-$,  nothing happens. \\ \\
 In the percolation model, for $x,y\in\Z^d,  x\sim y$, 
the oriented bond $(x,y)$ is said to be \textit{open with passage time} 
$e(x,y)$  (abbreviated $\lambda$\textit{-open}, or \textit{open} when the parameter is fixed) 
if $X(x,y)=1$ and \textit{closed} 
(with infinite passage time) if $X(x,y)=0$.  As in \cite{MR0675138}, we call this oriented percolation model
 a {\sl locally dependent random graph}. Indeed the fact that {\sl any} of the bonds exiting from site $x$
is open depends on the r.v. $T_x$.\par
For $x,y\in\Z^d$ (not necessarily neighbors),
``$x\to y$'' means that there exists  (at least) 
{\sl an open path} from $x$ to $y$, that is a path of open oriented bonds,
 $\Gamma_{x,y}=(z_0=x,z_1,\ldots,z_n=y)$. \par
If  $x\to y,\,x\not=y$, 
we define the \textit{passage time on} $\Gamma_{x,y}$ to be 
(see \eqref{def:open-closed-bonds})
\begin{equation}\label{eq:passage-time-on-Gamma}
\overline{\tau}(\Gamma_{x,y})=\sum_{j=0}^{n-1}e(z_j,z_{j+1})
\end{equation}
and, if $x=y$, we put $\overline{\tau}(\Gamma_{x,x})=0$.\par
\noindent 
 We then define the \textit{travel time from $x$ to} $y$  to be
\begin{equation}\label{eq:passage-time-from-x-to-y}
\tau(x,y)=
\begin{cases}
\dsp{\inf_{\{\Gamma_{x,y}\}}}\,\overline{\tau}(\Gamma_{x,y})
 &\text{if  $x\not=y,x\to y$,}\\
0 &\text{if  $x=y$,}\\
+\infty &\text{otherwise.}
\end{cases}
\end{equation}
where the infimum is over all possible open paths from $x$
to $y$. \\ \\
 Coming back to the  epidemic model, note that when the initial 
configuration is $\eta_0$ defined in \eqref{eq:IC}, for a given site $z$,
$\tau(o,z)$ is the duration for the infection to propagate from $o$ to $z$, changing
successively the values on all the sites of the involved path $\Gamma_{o,z}$
from 1 to $i$.\\ \\
 To link the two models, we define, for $t\ge 0$,
\begin{eqnarray}\label{eq:immune_at_t}
\Xi_t &= \{x\in\Z^d: x \mbox{ is immune at time } t\} &= \{x\in\Z^d: \eta_t(x)=0\};\\
\label{eq:infected_at_t}
\Upsilon_t &= \{x\in\Z^d: x \mbox{ is infected at time } t\} &= \{x\in\Z^d: \eta_t(x)=i\}.
\end{eqnarray} 
We have, for $z\in\Z^d,\,t\ge 0$,
\begin{equation}\label{eq:xi-zeta-tau}
 z\in\Upsilon_t\cup \Xi_t \quad\hbox{ if and only if }\quad {\tau}(o,z)\le t.
\end{equation}
Indeed, ${\tau}(o,z)\le t$ means that the infection has reached site $z$
before time $t$, so that site $z$ is either still infected or already
immune at time $t$, that is $z\in\Upsilon_t\cup \Xi_t$. Conversely, if
$z$ is infected or immune at time $t$, it means that it has already been infected.\\ \\
In the epidemic model,  we denote by  $C_o^{\rm out}$  the set of sites that will 
ever become infected, that is
\be\label{eq:C_o}
C_o^{\rm out}=\{x\in\Z^d:\exists\, t\ge 0, \eta_t(x)=i|\eta_0(o)=i,\forall\, z\not= o, \eta_0(z)=1\}.
\ee
 Then, by \eqref{eq:xi-zeta-tau}, $C_o^{\rm out}$ 
is the set of sites that can be reached from the origin following 
an open path in the percolation model. See also \cite[(1.2)]{MR0978353}, 
\cite[p. 322]{MR0496851} and \cite[Lemma 3.1]{MR0675138}.  \\ \\ 
 More generally,  in the percolation model,  for each $x\in \Z^d$ we define the 
\textit{ingoing and outgoing clusters to and from} $x$ to be 
\be\label{def:clusters_of_x}
C_x^{\rm in} =\{y\in \Z^d:y\rightarrow x\},\qquad C_x^{\rm out} =\{y\in \Z^d:x\rightarrow y\},
\ee
and the corresponding critical values to be
\be\label{def:critical_of_x}
\lambda_c^{\rm in}= \inf \{\lambda: P(\vert C_x^{\rm in}\vert =+\infty)>0\},\quad \lambda_c^{\rm out}
= \inf \{\lambda: P(\vert C_x^{\rm out}\vert= +\infty)>0\}, 
\ee  
where 
$|A|$ denotes the cardinality of a set $A$. \par
In Section \ref{sec:appliquer_GM}, we will first prove the following proposition
 about these critical values. 
\begin{proposition}\label{cor:lambda-i=lambda-o}
 We have $\lambda_c^{\rm in}=\lambda_c^{\rm out}.$ 
This common value will be denoted by $\lambda_c=\lambda_c(\Z^d)$.
\end{proposition}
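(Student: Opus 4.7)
The plan is to combine translation invariance, the reflection symmetry $x\mapsto -x$ of the underlying law, a mass-transport identity, and the Grimmett-Marstrand style block renormalization that is the main object of Section \ref{sec:appliquer_GM}. Translation invariance of $P_\lambda$ ensures that $P_\lambda(|C_x^i|=\infty)$ and $P_\lambda(|C_x^o|=\infty)$ do not depend on $x$, so $\lambda_c^i$ and $\lambda_c^o$ are well defined as infima. The law of $(T_x,e(x,y))_{x,y}$ is moreover invariant under $x\mapsto-x$, and combined with translation this yields $P_\lambda(o\to y)=P_\lambda(y\to o)$ for every $y\in\Z^d$; summing over $y$ gives
\[ \Exp_\lambda|C_o^o|=\sum_y P_\lambda(o\to y)=\sum_y P_\lambda(y\to o)=\Exp_\lambda|C_o^i|. \]

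A Markov-type bound $P_\lambda(|C_o^\cdot|=\infty)\le \Exp_\lambda|C_o^\cdot|/n$ passed to the limit $n\to\infty$ then shows that whenever this common expectation is finite, both percolation probabilities vanish. Hence both $\lambda_c^i$ and $\lambda_c^o$ dominate the common threshold $\widehat\lambda:=\inf\{\lambda:\Exp_\lambda|C_o^o|=\infty\}$, which already gives half of what one could hope for.

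The heart of the proof is the reverse direction: showing that $\lambda>\lambda_c^o$ also implies a positive probability of an infinite ingoing cluster at $o$, and symmetrically for $\lambda>\lambda_c^i$. For this I would invoke the block renormalization of Section \ref{sec:appliquer_GM}: one constructs block events witnessing a supercritical oriented percolation on a coarse-grained lattice, while carefully arranging that they only depend on disjoint finite families of the underlying variables $(T_x,e(x,y))$. Such block events can then be read in either time direction, so an infinite outgoing cluster forces an infinite ingoing one, giving $\lambda_c^i\le\lambda_c^o$; the reverse inequality follows by the symmetric scheme. The main obstacle is precisely this supercritical step: unlike in Bernoulli percolation, the outgoing edges from a site $x$ are positively correlated through the common survival time $T_x$ while the ingoing edges at $x$ are mutually independent, so the two reading directions of the block events are not microscopically symmetric; designing blocks that use only disjoint variables and still carry enough monotonicity to apply the Grimmett-Marstrand construction is the delicate point on which the whole argument hinges.
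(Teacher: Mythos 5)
Your first step is correct and is essentially the symmetry the paper also exploits: translation invariance (composed with the point reflection $x\mapsto -x$, which preserves the law of the field $(T_x,e(x,y))$ and the source/target structure of the oriented bonds) gives $P_\lambda(o\to y)=P_\lambda(y\to o)$, hence $E_\lambda|C_o^o|=E_\lambda|C_o^i|$. But, as you yourself note, this only yields a common \emph{lower} bound $\widehat\lambda\le\min(\lambda_c^i,\lambda_c^o)$; it does not prove equality. The part of your argument that would actually establish equality --- the ``reverse direction'' via a supercritical Grimmett--Marstrand block construction whose block events ``can be read in either direction'' --- is a programme, not a proof: you explicitly flag the obstruction (outgoing edges from $x$ are coupled through $T_x$ while ingoing edges at $x$ are independent, so the two reading directions are not microscopically symmetric) and do not resolve it. Nothing in the sketch explains how an infinite outgoing cluster would force an infinite ingoing one through such blocks. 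So there is a genuine gap exactly where the content of the proposition lies.

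The paper closes the argument in the \emph{subcritical} rather than the supercritical regime, using Theorems \ref{th:outgoing-decay} and \ref{th:ingoing-decay} (the adaptation of \cite[Theorem (3.1)]{BGS}): if $\lambda<\lambda_c^i$, then $P_\lambda(\partial B_n\to o)\le \exp(-\beta_i n)$, so by translation invariance $P_\lambda(o\to x)=P_\lambda(-x\to o)\le \exp(-\beta_i n)$ for every $x\in\partial B_n$; summing over the polynomially many points of $\partial B_n$ gives $P_\lambda(o\to\partial B_n)\to 0$, hence $\lambda\le\lambda_c^o$, i.e.\ $\lambda_c^i\le\lambda_c^o$, and the reverse inequality is symmetric. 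Your own first half could be completed by the same input: exponential decay below $\lambda_c^o$ (resp.\ $\lambda_c^i$) gives $E_\lambda|C_o^o|<\infty$ (resp.\ $E_\lambda|C_o^i|<\infty$) there, whence $\lambda_c^o\le\widehat\lambda$ and, via your identity $E_\lambda|C_o^i|=E_\lambda|C_o^o|$, also $\lambda_c^i\le\widehat\lambda$, so all three thresholds coincide. The missing ingredient is thus the sharp subcritical decay of \cite{BGS}, not a supercritical renormalization; the Grimmett--Marstrand machinery of Section \ref{sec:appliquer_GM} is needed for the slab and chemical-distance estimates later in the paper, not for this proposition.
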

 Assuming that $\lambda>\lambda_c$, the most important
part of our work in Section \ref{sec:appliquer_GM} will then be, 
thanks to dynamic renormalization techniques,
to analyze  for the percolation
model percolation on slabs in Theorem \ref{prop:clusters_rentrant-sortant_infinis}, 
and, through a succession of lemmas, to
establish in Proposition \ref{lem:ap2}
subexponential estimates for  the length of the shortest
path between two points $x$ and $y$ given that $x\to y$.
This will imply (see Remark \ref{rk:csq-ap2}) 
uniqueness of the infinite cluster
of sites connected to $+\infty$. 
Proposition \ref{lem:ap2} contains the crucial properties
we will need on the percolation model to derive 
our main result,  the shape theorem, that
we now state.   
\begin{theorem}\label{th:shape}
Assume $\lambda>\lambda_c$,  and the initial configuration of the epidemic model
$(\eta_t)_{t\ge 0}$ to be given by \eqref{eq:IC}. 
Then there exists a convex subset  $D\subset \R^d$  such that, for 
all $\eps>0$ we have,   for $t$ large enough 
\begin{equation}\label{eq:shape}
\Bigl((1-\eps)tD \cap C_o^{\rm out} \Bigr)\subset  \Bigl( \Xi_t \cup \Upsilon_t\Bigr)
\subset\Bigl( (1+\eps)tD \cap C_o^{\rm out} \Bigr)\mbox{ a.s.} 
 \end{equation}
and if $E(T_o^d)<\infty$ we also have
\begin{equation}\label{eq:couronne}
 \Upsilon_t  \subset\Bigl( (1+\eps)tD \setminus (1-\eps)tD \Bigr) \mbox{ a.s. for}\ t \mbox{ large enough.}
\end{equation} 
\end{theorem}
In other words, the epidemic's progression follows linearly the boundary of a convex set.
 Note that a moment assumption on $T_o$ is only required to localize
the infected individuals, and not for the first part of the theorem, for which there is
no assumption on the distribution of $T_o$. It is also remarkable that the fact that
$T_o$ could be either very small or very large with respect to the exponential variables $e(x,y)$
does not play any role.  \\ \\
 We prove  Theorem \ref{th:shape} in Section \ref{sec:Nicolas}. 
For this, we follow some of the fundamental steps of \cite{MR0978353}, but since in 
dimensions three or higher, circuits are not useful as in dimension 2, 
we had to find other methods of proofs. \\  \\
 By \eqref{eq:xi-zeta-tau},  we have to analyze travel times to prove Theorem \ref{th:shape}. 
 On the percolation model, 
we first construct, in Section \ref{subsec:widetildeC},
 for each site $z\in\Z^d$ a random neighborhood ${\mathcal V}(z)$ in such a way that two 
neighborhoods are always connected by open paths (these neighborhoods 
have to be different from those delimited by circuits of \citealp{MR0978353}). For $z,y\in\Z^d$, 
we show that the travel time $\tau(z,y)$  
 is `comparable'
(in a sense precised in Lemma \ref{lem:comparaison-approximation}) to 
the travel time ${\widehat\tau}(z,y)$ to go from ${\mathcal V}(z)$ to ${\mathcal V}(y)$.
 Then we approximate the travel time between sites by a subadditive process, and
we derive (in Theorem \ref{th:radial-limits} and Section \ref{subsec:extending_mu}) 
a radial limit $\mu(x)$
(for all $x$), which is asymptotically the linear growth speed of the epidemic in direction $x$.
In Theorem \ref{th:widehat_t-serie} we control how ${\widehat\tau}(o,\cdot)$ grows. Finally we 
prove in Theorem \ref{th:At-encadre} an asymptotic shape theorem for 
$\widehat\tau(o,\cdot)$, from which we deduce Theorem \ref{th:shape}.  
\section{Percolation estimates}\label{sec:appliquer_GM}
In this section we collect some results concerning the locally dependent random graph, 
 given by the random variables $(X(x,y),x,y\in \Z^d)$
  introduced in  \eqref{def:open-closed-bonds}. 
  Our goal is to derive subexponential estimates
 in Proposition \ref{lem:ap2}.  
\begin{remark}\label{rk:indepdtvectors} 
Although the r.v.'s $(X(x,y),x,y\in \Z^d)$
are not independent, if we denote by $({\rm e}_1,\ldots,{\rm e}_d)$  
the canonical basis of $\Z^d$, then
the  random vectors 
$\{X(x,x+{\rm e}_1),\ldots,X(x,x+{\rm e}_d),X(x,x-{\rm e}_1),\ldots,X(x,x-{\rm e}_d): x\in \Z^d\}$ 
(in which each component depends on $T_x$) are i.i.d., since two different vectors for $z,y\in\Z^d$
depend respectively on $T_z$ and $T_y$ which are independent. This small dependence
forces us to explain why  and how  some results known for independent percolation remain valid in this context.
 \end{remark} 
\begin{remark}\label{rk:FKG}
The function $X(x,y)$ is increasing in the 
independent random variables $T_x$ 
and $-e(x,y)$. It then follows
%, as in  \cite[Lemma (2.1)]{MR0978353} 
% with the help of \cite[Lemma 4.1 and its Corollary]{MR0115221}, 
that the r.v.'s $(X(x,y): x,y \in \Z^d,  y\sim x)$ satisfy 
the following property:\\
\newline  {\rm (FKG)} 
Let $U$ and $V$ be bounded measurable increasing functions
of the random variables $(X(x_j,y_j): x_j,y_j \in \Z^d,
y_j\sim x_j,  j\in \N)$,   then $E(UV)\ge E(U)E(V)$.
\\
\newline For the proof of this property, we refer to
\cite[Lemma (2.1)]{MR0978353} 
 with the help of \cite[Lemma 4.1 and its Corollary]{MR0115221}
 if $U$ and $V$ depend on a finite number of variables
 $X(x_j,y_j)$, and to \cite[Chapter 2]{MR1707339} to take the limit
 for an infinite number of variables.
 \\
\newline We will use this property
 in the proofs of Theorem \ref{prop:clusters_rentrant-sortant_infinis}, 
 Lemma \ref{lem:APp} and  Lemma \ref{lem:k_x-exp_tail} 
 below for $U,V$ indicator functions involving open paths without loops, 
 thus we will speak of increasing events
rather than increasing functions. 
 \end{remark}
For $n\in\N\setminus\{0\}$, let $B(n)=[-n,n]^d$,  let $\partial B(n)$ 
denote the  \textit{inner vertex boundary} of $B(n)$, that is
\be\label{innervertexboundary}
\partial B(n)= \{x\in\Z^d:x\in B(n), x\sim y \mbox{ for some } y\notin B(n)\};
\ee
 and,
for $x\in\R^d$,  $B_x(n)=x+B(n)$. 
 For $A,R\subset\Z^d$, ``$A\rightarrow R$'' means that 
there exists an open path 
$\Gamma_{x,y}$ from some $x\in A$ to some $y\in R$. 
\begin{theorem}\label{th:outgoing-decay}
(i) Suppose $\lambda <{\lambda}_c^{\rm out}$, 
then there exists  $\beta_{\rm out}>0$ such that for all $n>0$,
$P(o\rightarrow \partial B(n))\leq \exp (-\beta_{\rm out} n).$ 

\noindent
(ii) Suppose $\lambda <{\lambda}_c^{\rm in}$, 
then there exists $\beta_{\rm in}>0$ such that for all $n>0$,
$P(\partial B(n) \rightarrow o)\leq \exp (-\beta_{\rm in} n).$ 
\end{theorem}
Theorem \ref{th:outgoing-decay}\textit{(i)} is a special 
case of \cite[Theorem (3.1)]{MR1624925}, whose proof can 
be adapted to obtain Theorem \ref{th:outgoing-decay}\textit{(ii)}.
It is worth noting that in the context of our paper, by 
Remark \ref{rk:FKG},  \cite[Theorem (3.1)]{MR1624925} can 
be proved using the BK inequality instead of the Reimer inequality
 (see \citealp[Theorems (2.12), (2.19)]{MR1707339}). 
Theorem \ref{th:outgoing-decay} 
yields Proposition \ref{cor:lambda-i=lambda-o}:  \\ 
\begin{proof}{Proposition}{cor:lambda-i=lambda-o}
Suppose $\lambda <{\lambda}_c^{\rm in}$. Then by translation invariance and  
Theorem \ref{th:outgoing-decay}\textit{(ii)}
we have that for any $x\in \partial B(n)$, $P(o\rightarrow x)\leq \exp (-\beta_{\rm in} n)$.
Adding over all points of $\partial B(n)$ we get 
$P(o\rightarrow \partial B(n))\leq K'n^{d-1} \exp  (-\beta_{\rm in} n)$ 
for some constant $K'$,  which implies that 
$\lim_{n\to +\infty} P(o\rightarrow \partial B(n))=0$. Therefore $\lambda\leq \lambda_c^{\rm out}$ and 
$\lambda_c^{\rm in}\leq \lambda_c^{\rm out}$. The other inequality is obtained similarly. 
\end{proof}
\mbox{}\\ \\
{}From now on,  we assume $\lambda>\lambda_c(\Z^d)$ and   define the following events: 
 For   $x,y \in\Z^d,A\subset\Z^d$,
\newline \textit{(i)}  The event $\{ x\rightarrow y \ \mbox {within }\ A \}$ 
consists of all points in our probability space for which 
there  exists an open path 
 $\Gamma_{x,y}=(x_0=x,x_1,\ldots,x_n=y)$ from $x$ to $y$ such that
 $x_j\in A$ for all $j\in\{0,\ldots,n-1\}$. 
 Note that the end point $y$ may not belong to $A$.
\newline \textit{(ii)}  The event $\{x\to y \hbox{ outside } A\}$ 
consists of all points in our probability space for which 
 there exists an open path 
$\Gamma_{x,y}=(x_0=x,x_1,\ldots,x_n=y)$ from $x$ to $y$ such that
none of the $x_j$'s ($j\in\{0,\ldots,n\}$) belongs to $A$. 
\begin{definition}\label{def:within-outside}
For $x\in\Z^d,A\subset\Z^d$ let 
\begin{eqnarray*}\label{eq:C_xA}
C_x^{\rm in}(A) &=&\{y\in A:y\rightarrow x\ \hbox{ within }A \}\qquad \mbox{and}\cr
C_x^{\rm out}(A) &=&\{y\in A:x\rightarrow y\ \hbox{ within }A\}.
\end{eqnarray*}
\end{definition}
Note that  by  this definition $C_x^{\rm in}(A)\subset A$ 
and $C_x^{\rm out}(A)\subset A$.\\ \\
 The rest of this section relies heavily on the techniques
of \cite{MR1068308} or \cite[Chapter 7]{MR1707339}. We assume the reader familiar 
with them.  We postpone to Appendix \ref{sec:appendix} 
the proofs of Theorem \ref{prop:clusters_rentrant-sortant_infinis} and Lemma
\ref{lem:connections} below, which require a thoughtful adaptation 
of \cite[Chapter 7]{MR1707339}
for our context of dependent percolation. 
Nonetheless, it is possible
to go directly to Section \ref{sec:Nicolas}, where these techniques
are no longer used, assuming that Proposition \ref{lem:ap2} holds.\par
\medskip
Next theorem is crucial, it states that there is percolation on slabs. 
\begin{theorem}\label{prop:clusters_rentrant-sortant_infinis}
 Assume $\lambda>\lambda_c$.  
For any $k\in\N\setminus\{0\}$, let  $S_k=\{0,1,\dots,k\}\times\Z^{d-1}$ 
denote the slab of thickness $k$ containing $o$. Then 
for $k$ large enough we have  
$$
\inf_{x\in S_k}P(\vert C_x^{\rm in}(S_k)\vert =+\infty)>0, \quad
\mbox{ and }\quad
\inf_{x\in S_k}P(\vert C_x^{\rm out}(S_k)\vert =+\infty)>0.
$$ 
\end{theorem}
We introduce now some notation  about the shortest
path between two points $x$ and $y$ such that $x\to y$. 
\begin{notation}\label{not:extvertbound-Dxy}
 (a) For $A\subset\Z^d$
 we define the 
{\sl exterior vertex boundary} of $A$ as:
\be\label{exteriorvertexboundary}
\Delta_v A= \{x\in\Z^d:x\notin A, x\sim y \mbox{ for some } y\in A\}.
\ee
(b) If $x\rightarrow y$ let $D(x,y)$ be the smallest number 
of bonds required to build an open path from $x$ to $y$ 
 (hence in this path there is no loop, and the $D(x,y)$ bonds are distinct). 
 If $x\not\rightarrow y$, we put $D(x,y)=+\infty$.\par
 \noindent
(c) For $A\subset\Z^d$, $x\in A,y\in \Delta_v A$, 
``$D(x,y)<m \mbox{ within }\ A$''
 means that there is an open path $\Gamma_{x,y}$ 
 using less than $m$ bonds from 
$x$ to $y$ whose sites are all in $A$ except $y$.%\par
\end{notation}
\medskip
 The end of this section provides some upper bounds for the tail of the 
conditional distribution of $D(x,y)$ given the event $\{x\rightarrow y\}$. 
We derive Proposition \ref{lem:ap2}, required in Section \ref{sec:Nicolas},
thanks to Lemmas \ref{lem:connections}, \ref{lem:reaching faces}, \ref{lem:APp}.
These estimates are not optimal
and better results could be obtained by a thoughtful adaptation of the methods of \cite{MR1404543}. 
Instead of getting exponential decays in $\|x-y\|_1$ (or in $n$) we get 
exponential decays in
$\|x-y\|_1^{1/d}$ (or in $n^{1/d}$). We have adopted this approach because 
those weaker results suffice for our purposes and are simpler to obtain,
thus making our proof much easier to follow: 
it is possible to read our work knowing only
\cite{MR1068308} and not \cite{MR1404543}.  Next lemma is inspired by
\cite[Section 5\textit{(f)} p. 454]{MR1068308}. 
\begin{lemma}\label{lem:connections}
 Assume $\lambda>\lambda_c$.  
There exist $\delta>0$,  $k\in \N\setminus\{0\}$ and ${\rm C}_1={\rm C}_1(k)>0$  such that

\noindent
(i) $\forall n>0,\ x\in B(n+k)\setminus B(n),\ y\in (B(n+k)\setminus B(n))
\cup \Delta_v(B(n+k)\setminus B(n) ) $ we have : 
$$P(x\rightarrow y \ \mbox{ within }\ B(n+k)\setminus B(n))>\delta. $$
(ii) Let  for $(n,m)\in\Z^2$ with $n<m$, and for $\ell\ge 0$,
\begin{eqnarray}
A(n,m,\ell)&=&\{z: -k+n\leq z_1<n, -\infty<z_2\leq \ell+k\}\cup\cr
&& \{z:-k+n\leq z_1\leq m+k, \ell<z_2\leq \ell+k\}\cup \cr
&&\{z:m<z_1\leq m+k, -\infty<z_2\leq \ell+k\}.\label{eq:A_nm}
\end{eqnarray}
$\forall n<m,\ \forall x \in A(n,m,0),\forall y\in A(n,m,0)\cup \Delta_v A(n,m,0)$,  we have:
$$P(D(x,y)<{\rm C}_1(\| x-y\|_1 +(-x_2)^+ +(-y_2)^+)\ \mbox{ within } A(n,m,0))>\delta.$$
\end{lemma}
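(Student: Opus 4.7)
Both parts rest on the supercritical slab-percolation estimate of Proposition \ref{prop:clusters_rentrant-sortant_infinis}, the FKG inequality of Remark \ref{rk:FKG}, and the sprinkling device of Lemma \ref{lem:sprinkling}. The general strategy, adapted from Grimmett--Marstrand and Antal--Pisztora, is as follows: fix the slab thickness $k$ large enough that slab percolation holds already at some parameter $\lambda-\delta_1<\lambda$; then the extra independent Bernoulli noise provided by sprinkling at the final parameter $\lambda$ is used to merge pieces of percolating clusters in neighbouring slabs and to glue specified endpoints to these clusters. FKG then gives positive probability for the intersection of the resulting (increasing) local events, uniformly in $n$.

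For part (i), decompose the shell $B_{n+k}\setminus B_n$ into its $2d$ slab-like faces, each of thickness $k$ in one coordinate direction and of extent at least $2n$ in the $d-1$ transverse directions. Inside each such face the slab-percolation result combined with sprinkling guarantees a dominant cluster, and that any fixed site of the face lies in it with probability at least some $\rho>0$. By FKG, two specified sites $x,y$ of the same face are both in the dominant cluster, hence connected within the face, with probability at least $\rho^{2}$. When $x$ and $y$ lie in different faces, one handles at most $2d$ face-to-face transitions: in each corner of the shell two adjacent half-slabs share a cubical overlap of side $k$, and a further application of FKG plus sprinkling gives that the dominant clusters of the two adjacent faces meet in this overlap with positive probability. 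The product of these finitely many uniformly positive probabilities gives the bound $\delta$ claimed in (i), uniformly in $n$.

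For part (ii), observe that $A_{m,n}$ consists of a horizontal slab of thickness $k$ in the $z_2$ direction (the middle strip $0<z_2\le k$), together with two vertical appendages extending down to $z_2=-\infty$ at its two ends. The plan is to build the open path in three stages: vertically inside $x$'s appendage from $x$ up to the middle strip (length at most a constant times $|x_2|+k$); horizontally through the middle strip from the $z_1$-coordinate of $x$ to that of $y$ (length at most a constant times $\|x-y\|_1$); then vertically inside $y$'s appendage down to $y$ (length at most a constant times $|y_2|+k$). Each stage is obtained by an Antal--Pisztora--type argument inside the relevant slab: renormalize the slab on a coarse grid, declare a block \emph{good} if its boundary traces all belong to the slab's dominant cluster and are interconnected by short open paths, use sprinkling to make the good-block process dominate a supercritical site-percolation field, and then follow a chain of good blocks along the straight segment joining the endpoints to produce an open path of length at most a constant times the $\ell^{1}$-displacement. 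The three stage-events are increasing, so FKG combines them into a single event of probability at least some $\delta>0$ on which the concatenated path has length at most ${\rm C}_1(\|x-y\|_1+|x_2|+|y_2|)$ and stays inside $A_{m,n}$.

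The main obstacle is the chemical-distance estimate used in part (ii). In genuinely independent supercritical Bernoulli percolation Antal--Pisztora \cite{AP} obtain exponential tails for $D(x,y)/\|x-y\|_1$, but here the edge variables are only locally dependent (through the $T_x$) and we are restricted to a slab. As the paper explicitly remarks just before the lemma, a qualitative positive-probability linear bound (rather than an exponential tail) is all that is required, so one can afford a single-scale renormalization and avoid reproducing the full Antal--Pisztora machinery. The delicate point is verifying that the good-block process inherits enough short-range independence to dominate Bernoulli site percolation, which is precisely where the sprinkling lemma (Lemma \ref{lem:sprinkling}) and the FKG inequality of Remark \ref{rk:FKG} have to be combined with some care.
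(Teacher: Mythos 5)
Your overall strategy (single-scale renormalization into blocks of side comparable to $k$, sprinkling via Lemma \ref{lem:sprinkling} to make occupied blocks dominate a supercritical product field, FKG to glue the finitely many increasing events) is the same as the paper's, and your reading of the geometry of $A_{m,n}$ --- two vertical appendages joined by a horizontal strip, whence the extra $|x_2|+|y_2|$ --- matches Remark \ref{rk:ajouter-x2}. Part (i) as you describe it is essentially what the paper intends.

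There is, however, a genuine gap in your part (ii), precisely at the point you flag as delicate: how to get a path of length $O(\|x-y\|_1)$ with probability bounded below \emph{uniformly} in $\|x-y\|_1$. ``Follow a chain of good blocks along the straight segment joining the endpoints'' cannot work as stated: if you require every block on that segment to be good, the probability decays exponentially in $\|x-y\|_1$ and no uniform $\delta$ survives; if you allow detours around bad blocks, you lose all control on the length of the chain unless you reintroduce the Antal--Pisztora machinery you are explicitly trying to avoid. The paper's way out (Remark \ref{rk:sites-renormalises}) is to choose the renormalization density $p$ above the critical value of \emph{two-dimensional oriented} bond percolation rather than above the unoriented critical value used in \cite{GM} and \cite[Chapter 7]{G}. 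Supercritical oriented percolation produces, with probability bounded below by some $\gamma>0$, an oriented chain of occupied r-sites from $u$ to $v$, and orientation automatically forces such a chain to use at most $2\|u-v\|_1$ r-sites --- this is property {\rm (P)}. Since each occupied r-site contributes a bounded number of bonds to the open path, the linear bound $D\le {\rm C}_1\|x-y\|_1$ comes for free on an event of probability at least $\delta$, with no chemical-distance estimate needed at the renormalized level. This choice of the target process for the domination step is the one idea your proposal is missing; the rest (finite-range dependence of the block events, hence domination of a product measure, and the FKG concatenation of the three stages) is as you describe.
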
 
 We again introduce some notation, to decompose in Lemma \ref{lem:reaching faces}
a path from the center of a box to its boundary through hyperplanes. 
\begin{notation}\label{not:HnJnGn}
Let $k$ be given by Lemma \ref{lem:connections} and let $x$ and $y$ be points in $\Z^d$.
For $\ell\in \Z$ let  $H_\ell=\{z\in \Z^d:z_1=\ell\}$  and define the events, for $n\in\N$,
\begin{eqnarray*}
 J_n&=&\{x\rightarrow H_{x_1-1-jk}\ \mbox{within}\ B_x(nk),  
 j=0,\dots, \lfloor n/2\rfloor\}\cap \cr
&&\quad\{ H_{y_1+1+jk}\rightarrow y\ \mbox{within}\ B_y(nk), 
j=0,\dots, \lfloor n/2\rfloor\},\cr
G_n&=&\{x\rightarrow \partial B_x(nk), \ \partial B_y(nk)\rightarrow y\},
\end{eqnarray*}
where, for any $a\in\R$, $\lfloor a\rfloor$ denotes the greatest integer not greater than
$a$.
\end{notation}
\begin{lemma}\label{lem:reaching faces} 
 Assume $\lambda>\lambda_c$.  
Let $k$ be given by Lemma \ref{lem:connections} and let $x,y$ be points in $\Z^d$.
Then, for $n\in \N\setminus\{0\}$ there exists $\beta>0$ such that
\[
P(J_n\vert G_n)\ge 1-\exp(-\beta n).
\]
\end{lemma} 
\begin{proof}{Lemma}{lem:reaching faces}
 By translation invariance we may assume that $x$ is the origin.
We start showing that for some constant $\beta'>0$ and all $n$
 \begin{eqnarray} 
&&
P(o\rightarrow H_{-1-jk}\ \mbox{within}\ B(nk),  
 j=0,\dots, \lfloor n/2\rfloor  {\vert} o\rightarrow \partial B(nk))\cr
&& \ge 1-\exp(-\beta' n).\label{inter}
\end{eqnarray}
For this we first observe that
\begin{eqnarray*}
&&\{o\rightarrow H_{-1-jk}\ \mbox{within}\ B(nk) 
\mbox{ for some  }\lfloor n/2\rfloor \leq j\leq n \} \\
&&\subset \{  o\rightarrow H_{-1-jk}\ \mbox{within}\ B(nk),  
 j=0,\dots, \lfloor n/2\rfloor  \}.
 \end{eqnarray*}
Hence \eqref{inter} follows from
\begin{eqnarray*}\label{inter'}
&&
P(o\rightarrow H_{-1-jk} \mbox{ within } B(nk) 
\mbox{ for some  }\lfloor n/2\rfloor \leq j\leq n  
{\vert} o\rightarrow \partial  B(nk))\cr
&&\ge 1-\exp(-\beta' n),
\end{eqnarray*}
which is a consequence of Lemma \ref{lem:connections}\textit{(i)}.
Since $P(\partial B_y(kn)\rightarrow y)$ is 
bounded below as $n$ goes to infinity, \eqref{inter} implies that
$$P(o\rightarrow H_{-1-jk}\ \mbox{ within}\ B(nk),  
 j=0,\dots, \lfloor n/2\rfloor  {\vert} G_n )$$
converges to $1$ exponentially fast. Similarly one proves that
$$  P(H_{y_1+1+jk}\rightarrow y\ \mbox{within}\ B_y(nk), 
j=0,\dots, \lfloor n/2\rfloor \vert G_n) $$
converges to $1$ exponentially fast, and the lemma follows.
\end{proof}
\mbox{}\\ \\
 In
Lemma \ref{lem:APp} below we prove a chemical distance bound 
that will be used later on to derive in Remark \ref{rk:csq-ap2}, through 
Proposition \ref{lem:ap2}, the uniqueness of the infinite cluster
of sites connected to $+\infty$. The main
technique is to construct an open path in a ring after independent
attempts thanks on the one hand to 
Lemma \ref{lem:reaching faces} whose $J_n$'s enable to
get disjoint slabs, and on the other hand
to Lemma \ref{lem:connections}\textit{(ii)} 
once we find the appropriate ring. 
\begin{lemma}\label{lem:APp}
 Assume $\lambda>\lambda_c$.  
Let $k$ be given by Lemma \ref{lem:connections}, and let $G_n$ be as in Lemma \ref{lem:reaching faces}.
Then, there exist constants 
${\rm C}_2$, ${\rm C}_3$ and $\alpha_2>0$ such that, for all $x,y \in \Z^d$, 
 $n\in \N\setminus\{0\}$,  we have 
$$P(D(x,y)> {\rm C}_2 \|x-y\|_1+{\rm C}_3(nk)^d \vert \ G_n)\leq \exp(-\alpha_2 n).  $$
\end{lemma}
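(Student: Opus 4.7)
The plan is to build an open path from $x$ to $y$ of length at most $C_2\|x-y\|_1 + C_3(nk)^d$ by concatenating three pieces: a \textit{start} inside $B(x,nk)$, a \textit{middle} crossing a thin corridor lying outside both boxes, and an \textit{end} inside $B(y,nk)$. The two box-local pieces will absorb the $C_3(nk)^d$ term, while Lemma \ref{lem:connections}\,\textit{(ii)} will supply the linear $C_2\|x-y\|_1$ bound on the middle piece; independence of many middle trials will produce the $\exp(-\alpha_2 n)$ decay.

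First I would reduce to $J_n$ via Lemma \ref{lem:reaching faces}: since $P(J_n^c\mid G_n)\le\exp(-\beta n)$, it suffices to prove an $\exp(-\alpha' n)$ bound for the same event intersected with $J_n$. On $J_n\cap G_n$, for each $i\in\{0,\ldots,N\}$ with $N:=\lfloor n/2\rfloor$ there exist open paths inside $B(x,nk)$ from $x$ to some random $\tilde{x}_i\in H_{x_1-1-ik}$ and inside $B(y,nk)$ from some $\tilde{y}_i\in H_{y_1+1+ik}$ to $y$. Their lengths are trivially bounded by the edge count $d(2nk+1)^d$ of each box, which yields the $C_3(nk)^d$ contribution. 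Crucially, $J_n\cap G_n$ depends only on bonds with source in $B(x,nk)\cup B(y,nk)$.

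For the middle piece, for each $i$ I would choose a translate $A^{(i)}$ of a corridor of the shape in Lemma \ref{lem:connections}\,\textit{(ii)}, with its left vertical strip placed just outside $B(x,nk)$ alongside $H_{x_1-1-ik}$ and its right vertical strip placed just outside $B(y,nk)$ alongside $H_{y_1+1+ik}$. Offsetting successive corridors by a multiple of $k$ in a coordinate distinct from $e_1$ and $e_2$ makes $A^{(0)},\ldots,A^{(N)}$ pairwise disjoint and disjoint from $B(x,nk)\cup B(y,nk)$. Set
\bd
\mathcal{E}_i = \bigl\{\mbox{some pair of access points on }H_{x_1-1-ik}\mbox{ and }H_{y_1+1+ik}\mbox{ is joined in }A^{(i)}\mbox{ by an open path of length}\le C_1\|x-y\|_1+C_1(nk)\bigr\}.
\ed
By Lemma \ref{lem:connections}\,\textit{(ii)}, $P(\mathcal{E}_i)\ge\delta$; the events $\mathcal{E}_i$ are mutually independent and independent of $J_n\cap G_n$ because their defining bonds have pairwise disjoint source sets, all outside the conditioning region. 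Therefore
\bd
P\bigl(\bigcap_{i=0}^{N}\mathcal{E}_i^c \,\big|\, J_n\cap G_n\bigr)\le(1-\delta)^{N+1}\le\exp(-\alpha' n).
\ed
When some $\mathcal{E}_i$ occurs, concatenating the start, a short $O(nk)$-length detour within the box along $H_{x_1-1-ik}$ from $\tilde{x}_i$ to the chosen corridor access point, the middle path, the symmetric detour near $y$, and the end produces an open path of the required length from $x$ to $y$.

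The main obstacle I expect is the geometric bookkeeping for the corridors. The random hyperplane endpoints $\tilde{x}_i,\tilde{y}_i$ may lie anywhere in $B(x,nk)\cap H_{x_1-1-ik}$ and $B(y,nk)\cap H_{y_1+1+ik}$, while each $A^{(i)}$ must simultaneously (a) be of the form required by Lemma \ref{lem:connections}\,\textit{(ii)}, (b) lie outside $B(x,nk)\cup B(y,nk)$ and be disjoint from the other $A^{(j)}$'s, and (c) have access points close enough to $\tilde{x}_i,\tilde{y}_i$ that the $|x_2|+|y_2|$ correction in that lemma stays $O(nk)$. Reconciling (a)--(c), and verifying that the short hyperplane detours joining $\tilde{x}_i$ and $\tilde{y}_i$ to the corridor boundaries can be realized within the boxes at bounded extra cost, is the technical heart of the argument.
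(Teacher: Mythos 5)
Your overall architecture (reduce to $J_n$, use the box paths to absorb a $(nk)^d$ term, use Lemma \ref{lem:connections}\,\textit{(ii)} on $\lfloor n/2\rfloor$ corridors to get the linear term and the exponential decay) matches the paper's, but you resolve the dependence problem differently, and your resolution has a genuine gap at exactly the point you flag as ``the technical heart.'' The difficulty is structural, not just bookkeeping: the endpoints $\tilde{x}_i\in H_{x_1-1-ik}$ and $\tilde{y}_i\in H_{y_1+1+ik}$ produced by $J_n$ lie \emph{inside} $B(x,nk)$ and $B(y,nk)$, while your corridors $A^{(i)}$ lie entirely \emph{outside} both boxes. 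The ``short $O(nk)$-length detour'' joining $\tilde{x}_i$ to an access point of $A^{(i)}$ must itself be a path of open bonds, and nothing in $J_n$, $G_n$ or $\mathcal{E}_i$ guarantees its existence. If you try to make it exist with probability bounded below (e.g.\ by another application of Lemma \ref{lem:connections}), the bonds involved lie inside the boxes and depend on the variables generating $J_n\cap G_n$, and the detours for different $i$ use overlapping regions; this destroys precisely the mutual independence and the independence from the conditioning event on which your bound $(1-\delta)^{N+1}$ rests. (A secondary issue: the corridors of Lemma \ref{lem:connections}\,\textit{(ii)} are constrained only in the coordinates $z_1,z_2$, hence are infinite in the remaining $d-2$ coordinates, so offsetting them in a third coordinate does not make them disjoint without first truncating them.)

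The paper resolves this tension the opposite way: it accepts the dependence and removes the connection problem. Its corridors $A_0,\dots,A_{\lfloor n/2\rfloor}$ are disjoint nested ``staples'' around $o$ and $y$ that \emph{do} intersect the boxes, chosen so that each hyperplane $H_{-1-ik}$ lies inside the left strip of $A_i$; consequently the box path guaranteed by $J_n$ \emph{enters} $A_i$ at a random point $U_i$ (and similarly $V_i$ on the $y$ side), and no extra open detour is needed. The price is that the corridor events $E_i(u_i,v_i)$ are no longer independent of $J_n$, and the paper pays it with a conditioning argument: it introduces the $\sigma$-algebra $\sigma_i$ generated by the variables in the boxes and in $A_0,\dots,A_{i-1}$, notes that $\mathbf{1}_{F_i(u_i,v_i)}\prod_{j<i}\mathbf{1}_{W_j^c}$ is $\sigma_i$-measurable and that $E_i(u_i,v_i)$ is independent of $\sigma_i$, and uses that $J_n$ and $E_i(u_i,v_i)$ are both increasing (Remark \ref{rk:FKG}) to get $P\bigl(W_i\cap J_n\cap(\cap_{j<i}W_j^c)\bigr)\ge\delta\,P\bigl(J_n\cap(\cap_{j<i}W_j^c)\bigr)$, whence $(1-\delta)^{r}$ by induction. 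To repair your write-up you would either have to import this FKG-plus-conditioning step (at which point the exterior placement of the corridors buys you nothing), or find a way to route the $J_n$-paths all the way out of the boxes, which $J_n$ as defined does not provide.
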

\begin{proof}{Lemma}{lem:APp}
 Again, by translation invariance we may assume that $x$ is the origin 
 and without loss of generality, we also assume that $y_1>0$ and $y_2\ge 0$.  
By Lemma \ref{lem:reaching faces} it suffices to show that 
$$P(D(o,y)> {\rm C}_2 \|y\|_1+{\rm C}_3(nk)^d \vert \ J_n)$$
 decays exponentially in $n$.

%Let $r=\lfloor n/2\rfloor$, and  
 For $0\le j\le \lfloor n/2\rfloor$, let (see \eqref{eq:A_nm})
$A_j=A(-jk,y_1+jk,y_2+jk)$. 
\begin{figure}[htp]\label{fig:dessin_lemme3.4-2}
\centering
\input{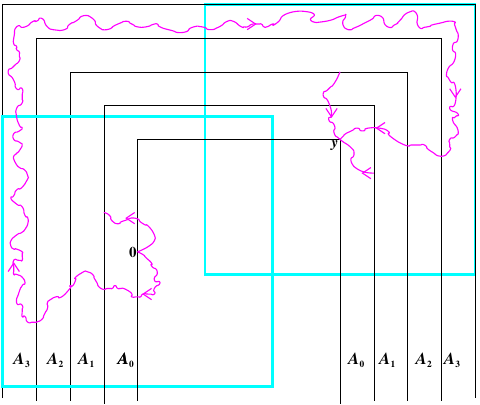_t}
\caption{the event $W_3$}
\end{figure}
Note that the sets  $A_0,\dots, A_{\lfloor n/2\rfloor}$  are disjoint. Figure 1
%\ref{fig:dessin_lemme3.4-2}
 should help the reader to visualize them.  Our aim is to 
 find paths from $o$ to $y$ through independent attempts, which will enable 
to use Lemma \ref{lem:connections}\textit{(ii)} in each set $A_j$. 
This is why  
we have first replaced $G_n$ by $J_n$ to condition with.\par  
On the event $J_n$,  we can reach from the origin 
each of the sets $A_i$ by means of an open path contained in $B(nk)$
and from each of these sets we can reach $y$ by means of an open 
path contained in  $ B_y(nk)$.  Hence, on $J_{n}$ for each 
$j\in \{0,\dots,\lfloor n/2\rfloor\}$  there exist a random point 
$U_j\in B(nk)\cap A_j$ and  an open path from $o$ to 
$U_j$ such that all its sites except $U_j$ are in 
$B(nk)\cap (\cap_{\ell=j}^{\lfloor n/2\rfloor}A_\ell^c)$. 
If there are many possible values of $U_j$  we choose 
the first one in some arbitrary deterministic order. Similarly, 
there is a random point  $V_j\in B_y(nk)\cap \Delta_v A_j$ 
and an open path from $V_j$ to $y$ with all its
sites in  $B_y(nk)\cap (\cap_{\ell=j}^{\lfloor n/2\rfloor}A_\ell^c)$. 
Let $u^j$ and $v^j$ be possible values of $U_j$ and $V_j$ respectively. 
Then let $\rm C_1$ be as in Lemma \ref{lem:connections} and define
\begin{eqnarray*}
F_j(u^j,v^j)&=&\{U_j=u^j,V_j=v^j\}, \cr
E_j(u^j,v^j)&=&\{D(u^j,v^j)< {\rm C_1}(\| u^j-v^j\|_1+\vert u^j_2\vert 
+\vert v^j_2\vert)\ \mbox{within}\ A_j\}\   \mbox{and} \cr
W_j&=&\cup_{u^j,v^j}\left(F_j(u^j,v^j)\cap E_j(u^j,v^j)\right),
\end{eqnarray*}
where the union is over all possible values of $U_j$ and $V_j$.
Now we define a subset of $\Z^d$ 
\begin{equation}
 R_j=\Big(B(nk)\cup B_y(nk)
 \cup( A_0\cup\dots \cup A_{j-1})\Big)\cap \Big( A_j^c\cap \dots \cap A_{n-1}^c\Big),
\end{equation}
 and we denote by $\sigma_j$ the $\sigma$-algebra generated by  $\{T_x,e(x,y):x \in R_j,x\sim y\}$.
Then, noting that $\bold{1}_{F_j(u^j,v^j)}\Pi_{\ell=0}^{j-1}\bold{1}_{W_\ell^c} $ is $\sigma_j$-measurable, 
write for  $j=1,\dots, \lfloor n/2\rfloor$: 
\begin{eqnarray}
&& P\left(W_j\cap J_n \cap(
\cap_{\ell=0}^{j-1}W_\ell^c)\right)=\sum_{u^j,v^j}E\left(\bold{1}_{F_j(u^j,v^j)}\mathbf{1}_{E_j(u^j,v^j)}
\bold{1}_{J_n}(\Pi_{\ell=0}^{j-1}\bold{1}_{W_\ell^c})\right)\nonumber\\
&=&
\sum_{u^j,v^j}E\left(\bold{1}_{F_j(u^j,v^j)}(\Pi_{\ell=0}^{j-1}\bold{1}_{W_\ell^c}) 
E( \bold{1}_{J_n} \bold{1}_{E_j(u^j,v^j)}\vert \sigma_j)\right)\nonumber\\
&\ge&
\sum_{u^j,v^j}P(E_j(u^j,v^j))E \left( \bold{1}_{F_j(u^j,v^j)}(\Pi_{\ell=0}^{j-1}\bold{1}_{W_\ell^c}) 
E( \bold{1}_{J_n}  \vert \sigma_j)\right)\nonumber\end{eqnarray} %\\
\begin{eqnarray}&=&\sum_{u^j,v^j}P(E_j(u^j,v^j)) 
E\left( \bold{1}_{F_j(u^j,v^j)}(\Pi_{\ell=0}^{j-1}\bold{1}_{W_\ell^c}) \bold{1}_{J_n} \right)\nonumber\\
&\ge&
\delta \sum_{u^j,v^j}P\left(F_j(u^j,v^j)\cap J_n \cap(\cap_{\ell=0}^{j-1}W_\ell^c)\right) 
=\delta P\left(J_n\cap(\cap_{\ell=0}^{j-1}W_\ell^c)\right), \label{last-en-plus}
\end{eqnarray}
where the sums are over all possible values of $U_j$ and $V_j$, 
the first inequality follows  from Remark \ref{rk:FKG}
since both 
$J_n$ and  $E_j(u^j,v^j)$ are increasing events, and from the fact that $E_j(u^j,v^j)$ 
is independent of $\sigma_j$;   
 the second inequality follows  from
Lemma 
\ref{lem:connections}\textit{(ii)} and the last equality from the fact that $J_n$ 
is contained in the union of the $F_j(u^j,v^j)$'s which are disjoint. 
We rewrite \eqref{last-en-plus} as
\begin{eqnarray*}
P\left(J_n\cap(\cap _{\ell=0}^{j}W_\ell^c)\right)&\le& 
(1-\delta)P\left(W_j\cap J_n\cap(\cap _{\ell=0}^{j-1}W_\ell^c)\right)\cr
&\le& 
(1-\delta)P\left(J_n\cap(\cap _{\ell=0}^{j-1}W_\ell^c)\right)
\end{eqnarray*}
Now, proceeding by
induction  on $j$ one gets: 
$$P\left(J_n\cap(\cap _{\ell=0}^{\lfloor n/2\rfloor-1}W_\ell^c)\right)
\le (1-\delta)^{\lfloor n/2\rfloor}P(J_n).$$
 Since we can choose ${\rm C}_2$ and ${\rm C}_3$ in such a way that the event 
$\{D(o,y)> {\rm C}_2 \|y\|_1+{\rm C}_3(nk)^d \}$  does not occur if any of 
the $W_i$'s occurs, the lemma follows.
\end{proof}
\mbox{}\\ \\
Next proposition concludes this section.
\begin{proposition}\label{lem:ap2} 
 Assume $\lambda>\lambda_c$.  

(i) Let  
${\rm C}_2$ be as in Lemma \ref{lem:APp}. Then, there exists $\alpha_3>0$ such that
 for all $x,y\in\Z^d,n\in\N$, we have 
$$P(D(x,y)\geq {\rm C}_2\| x-y\|_1+n^d\vert x\rightarrow y)\leq \exp(-\alpha_3 n);$$
(ii) $P(x\rightarrow y \vert \ \vert C_x^{\rm out} \vert =+\infty, \ \vert C_y^{\rm in} \vert =+\infty )=1$.
\end{proposition}  
 \begin{proof}{Proposition}{lem:ap2} 
\textit{(i)} 
Modifying the constant $\alpha_2$, 
the statement of Lemma \ref{lem:APp} above  holds for ${\rm C}_3=1/k^d$. \par
\textit{(ii)}
We have that $\{x\to \infty\hbox{ and } \infty \to y \}=\cap_n G_n$. Hence for all $k$,
\begin{eqnarray*}
P(D(x,y)=+\infty, x\to \infty\hbox{ and } \infty \to y )  
&\leq&
P(D(x,y)=+\infty, G_k) \\
&\leq& P(D(x,y)=+\infty \vert G_k),
\end {eqnarray*}
which converges to 0 when $k$ goes to infinity by Lemma \ref{lem:APp}.
We thus have $P(D(x,y)=+\infty \vert x\to \infty\hbox{ and } \infty \to y )=0$.
\end{proof}
\section{ The shape theorem }\label{sec:Nicolas}
 In the percolation model, 
let $C^\infty$  be the cluster of sites connected to $\infty$:
\begin{equation}\label{eq:widetildeC}
C^\infty= \{x\in\Z^d: x\to \infty \hbox{ and }
\infty\to x\}.
\end{equation}
\begin{remark}\label{rk:csq-ap2}
As a consequence of  Proposition \ref{lem:ap2}\textit{(ii)}, 
$C^\infty$ is a connected set:  if two sites $x,y$ of
$\Z^d$ belong to $C^\infty$, then $x\to y$ and $y\to x$.
\end{remark}  
\subsection{Neighborhoods in $C^\infty$}\label{subsec:widetildeC}
 In this subsection, we construct neighborhoods ${\mathcal V}(\cdot)$ of sites
in $\Z^d$.\par
 We first deal separately with finite clusters, which will have no influence on the
asymptotic shape of the epidemic. We will include them in the neighborhoods
${\mathcal V}(\cdot)$ of sites we construct. 
\begin{definition}\label{def:racines}
For $x\in\Z^d$, let
\[
\begin{cases}
R_x^{\rm out}=\{y\in\Z^d: x\to y \hbox{ outside } C^\infty \}
 &\text{(outgoing root from $x$);}\\
R_x^{\rm in}=\{y\in\Z^d: y\to x \hbox{ outside } C^\infty \}
 &\text{(incoming root to $x$).}
\end{cases}
%\cases{
%    R_x^{\rm out}=\{y\in\Z^d: x\to y \hbox{ outside } C^\infty \} & (outgoing root from $x$);\cr
%    R_x^{\rm in}=\{y\in\Z^d: y\to x \hbox{ outside } C^\infty \} & (incoming root to $x$).\cr
%} 
\]
\end{definition}
 In particular
$x$ belongs to $R_x^{\rm out}$ and $R_x^{\rm in}$ if and only if $x\notin{C^\infty}$.  
Otherwise $R_x^{\rm out}$ and $R_x^{\rm in}$ are empty.
By next lemma, the distribution of the radius of $R_o^{\rm out}\cup R_o^{\rm in}$  decreases exponentially. 
\begin{lemma}\label{lem:exp_decay_R}
There exists $\sigma_1=\sigma_1(\lambda,d)>0$ such that, for all $n\in\N$,
$$P\left((R_o^{\rm out}\cup R_o^{\rm in})\cap \partial B(n)\neq\emptyset\right)\le \exp(- \sigma_1 n).$$
\end{lemma}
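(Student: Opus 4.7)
The proof reduces, via a midpoint argument, to an exponential decay of finite-cluster connection probabilities, which I would then establish by renormalization. First, by a union bound,
$$P\bigl((R_o^o \cup R_o^i) \cap \partial B_n \ne \emptyset\bigr) \le P(R_o^o \cap \partial B_n \ne \emptyset) + P(R_o^i \cap \partial B_n \ne \emptyset),$$
and the two summands admit identical treatments thanks to the incoming/outgoing duality already visible in Section \ref{sec:appliquer_GM}, so I focus on $P(R_o^o \cap \partial B_n \ne \emptyset)$. Suppose this event holds, witnessed by an open path $o = y_0, y_1, \ldots, y_m = y \in \partial B_n$ with every $y_j \notin \widetilde{C}$. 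Since each step changes $\|y_j\|_\infty$ by at most $1$, some $y^* = y_{j^*}$ satisfies $\|y^*\|_\infty = \lceil n/2 \rceil$; and because $y^* \notin \widetilde{C}$, either $|C_{y^*}^o| < \infty$ (in which case $C_{y^*}^o \supseteq \{y_{j^*},\ldots,y_m\}$ reaches $\ell^\infty$-distance at least $n/2 - 1$ from $y^*$) or $|C_{y^*}^i| < \infty$ (in which case $C_{y^*}^i \supseteq \{y_0,\ldots,y_{j^*}\}$ does the same). Summing over $y^* \in \partial B_{\lceil n/2 \rceil}$ and using translation invariance yields
$$P(R_o^o \cap \partial B_n \ne \emptyset) \le K n^{d-1} \Bigl( P(|C_o^o| < \infty, o \to \partial B_{\lfloor n/2 \rfloor}) + P(|C_o^i| < \infty, \partial B_{\lfloor n/2 \rfloor} \to o) \Bigr).$$

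It then remains to prove the finite-cluster decay
$$P(|C_o^o| < \infty, o \to \partial B_n) \le \exp(-c n),$$
and its incoming analog, which is the oriented locally-dependent counterpart of the classical exponential decay of finite-cluster connection probabilities in supercritical percolation. I would establish it by renormalization along the lines of Section \ref{sec:appliquer_GM}: partition $\Z^d$ into $r$-sites of side $O(k)$ as in Remark \ref{rk:sites-renormalises} and call an $r$-site \emph{good} when a suitable local crossing event holds, guaranteeing that it contains a rich open cluster robustly connected (in both orientations) to the analogous clusters in each of its $2d$ neighboring $r$-sites. By Proposition \ref{prop:clusters_rentrant-sortant_infinis} combined with the sprinkling Lemma \ref{lem:sprinkling}, for $k$ large the indicators of good $r$-sites stochastically dominate a Bernoulli site percolation of density $p$ arbitrarily close to $1$; and the definition is arranged so that, using Lemma \ref{lem:ap2}(ii), the crossing cluster of every good $r$-site is contained in $\widetilde{C}$ and moreover any open path meeting the $r$-site also meets $\widetilde{C}$. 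On $\{|C_o^o| < \infty,\, o \to \partial B_n\}$ the finite cluster $C_o^o$ is therefore confined to \emph{bad} $r$-sites (touching a good one would connect $C_o^o$ to $\widetilde{C}$ and make $|C_o^o| = \infty$), producing a connected chain of $\Omega(n/k)$ bad $r$-sites from the $r$-site of $o$ out to distance $\Omega(n)$. Standard exponential decay of bad clusters in a highly supercritical site percolation then closes the bound.

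The main obstacle is the precise formulation of the \emph{good $r$-site} event: it must be local enough to retain stochastic domination by an independent Bernoulli field, yet strong enough to force every open path traversing a good $r$-site to meet $\widetilde{C}$ in both the outgoing and the incoming orientations simultaneously. This is the same delicate combination of the Grimmett--Marstrand slab percolation machinery with the sprinkling technique that already underlies the results invoked in Section \ref{sec:appliquer_GM}, adapted here to the oriented locally-dependent setting, with Lemma \ref{lem:ap2}(ii) providing the uniqueness-of-infinite-cluster input needed to identify good blocks with pieces of $\widetilde{C}$.
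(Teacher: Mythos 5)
Your proposal is correct and follows essentially the same route as the paper: the same intermediate-sphere argument locates a point $y^*$ of the witnessing path at distance about $n/2$ from the origin whose finite in- or out-cluster must then have radius of order $n$, reducing the lemma to the exponential decay of the radius of finite clusters, which is exactly the paper's estimate \eqref{eq:analogue_G-thm8.21}. The only difference is presentational: the paper obtains that estimate by citing an adaptation of Grimmett's Theorems (8.18) and (8.21), whereas you sketch the underlying renormalization argument directly.
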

\begin{proof}{Lemma}{lem:exp_decay_R}
For $n\in\N\setminus\{0\}$, $R_o^{\rm out}\cap \partial B(2n)\neq\emptyset$ means that 
there exists an open path  $o\to \partial B(2n)$ outside 
$C^\infty$. This implies that there exists $x\in \partial B(n)$ 
satisfying $o\to x\to \partial B(2n)$ outside $C^\infty$. Similarly, 
$R_o^{\rm in}\cap \partial B(2n)\neq\emptyset$ implies that there exists 
$x\in \partial B(n)$ satisfying $\partial B(2n)\to x\to o$ outside 
$C^\infty$. Then for such a point, 
either the cluster $C_x^{\rm out}$ or the cluster $C_x^{\rm in}$ is 
finite, and has a radius larger than or equal to $n$. 
 Relying on Proposition \ref{p1},\textit{b)} in Appendix \ref{sec:appendix}, we can
follow  the proof of \cite[Theorems (8.18), (8.21)]{MR1707339} 
to get the existence 
of $\sigma_0=\sigma_0(\lambda,d)>0$ such that:
\begin{equation}\label{eq:analogue_G-thm8.21}
\begin{cases}
P(C_x^{\rm out} \cap \partial B_x(n)\neq\emptyset, |C_x^{\rm out}| <+\infty) \le  \exp(- \sigma_0 n);\\
P(C_x^{\rm in} \cap \partial B_x(n)\neq\emptyset, |C_x^{\rm in}| <+\infty) \le  \exp(- \sigma_0 n).
\end{cases}
%\cases{
%P(C_x^{\rm out} \cap \partial B_x(n)\neq\emptyset, |C_x^{\rm out}| <+\infty) \le  \exp(- \sigma_0 n); \cr
%P(C_x^{\rm in} \cap \partial B_x(n)\neq\emptyset, |C_x^{\rm in}| <+\infty) \le  \exp(- \sigma_0 n).
%}
\end{equation}
Hence
\begin{eqnarray*}
P\left((R_o^{\rm out}\cup R_o^{\rm in})\cap \partial B(2n)\neq\emptyset\right) 
&\le& P\left(R_o^{\rm out}\cap \partial B(2n)\neq\emptyset\right)\cr
&&
+P\left(R_o^{\rm in}\cap \partial B(2n)\neq\emptyset\right)\cr
&\le& 2\sum_{x\in\partial B(n)} P(|C_x^{\rm out}| <+\infty, x\to \partial B_x(n))\cr
&&
+2\sum_{x\in\partial B(n)} P(|C_x^{\rm in}| <+\infty,  \partial B_x(n)\to x)\cr
& \le & 4|\partial B(n)| \exp(- \sigma_0 n)
\end{eqnarray*}
which induces the result.
\end{proof}
\mbox{}\\ \\
 To define the neighborhood ${\mathcal V}(x)$ on $C^\infty$ of a site $x$,
we introduce the smallest box whose interior contains $R_x^{\rm out}$ and $R_x^{\rm in}$, 
which contains elements of $C^\infty$, and is such that two elements 
of $C^\infty$ in this box are connected by an open path which does not
exit from a little larger box. For this last condition, which will enable to bound the 
 travel  time through ${\mathcal V}(x)$, 
we use the parameter ${\rm C}_2$ obtained in Lemma \ref{lem:APp}.
\begin{definition}\label{def:k_x}
Let  ${\rm C}'={\rm C_2}d+2$.
Let $\kappa(x)$ be the smallest $l\in\N\setminus\{0\}$
such that 
\[
\begin{cases}
(i)\,\,\,\,\, \partial B_x(l) \cap \left(R_x^{\rm out}\cup R_x^{\rm in}\right)=\emptyset;\\
(ii)\,\,\,  B_x(l) \cap C^\infty \not= \emptyset;\\
(iii)\, \forall\, (y,z) \in (B_x(l) \cap C^\infty)^2,\,y\to z \hbox{ within } B_x({\rm C}'l).
\end{cases}
%\cases{
%    (i)\,\,\,\,\, \partial B_x(l) \cap \left(R_x^{\rm out}\cup R_x^{\rm in}\right)=\emptyset;\cr
%  (ii)\,\,\,  B_x(l) \cap C^\infty \not= \emptyset;\cr
%  (iii)\, \forall\, (y,z) \in (B_x(l) \cap C^\infty)^2,\,y\to z \hbox{ within } B_x({\rm C}'l).\cr
%} 
\] 
\end{definition}
\begin{remark}\label{rk:(i)}
By \textit{(i)} above, $R_x^{\rm out}\cup R_x^{\rm in}\subset B_x(\kappa(x))$.
\end{remark}
 In the next lemma, we bound the probability a box of size $n$ does not 
admit properties \textit{(i)--(iii)} above, that is, we prove that 
the random variable $\kappa(x)$  has  a sub-exponential tail. 
\begin{lemma}\label{lem:k_x-exp_tail}
There exists a constant $\sigma=\sigma(\lambda,d)>0$ such that, for any $n\in\N$,
\[
P(\kappa(x)\ge n)\le \exp(-\sigma n^{1/d}).
\]
\end{lemma}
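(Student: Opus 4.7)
The plan is to decompose $\{\kappa(x)\ge n\}$ according to which of the conditions (i), (ii), (iii) in Definition \ref{def:k_x} fails at the specific level $l=n-1$: the definition of $\kappa(x)$ forces that at $l=n-1$ at least one of them must fail, so
\begin{displaymath}
P(\kappa(x)\ge n)\le P_{(i)}+P_{(ii)}+P_{(iii)},
\end{displaymath}
where $P_{(j)}$ denotes the probability that condition $(j)$ fails at level $n-1$. I bound each term separately; the sub-exponential rate in the statement will come from $P_{(iii)}$.

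Condition (i) is immediate from Lemma \ref{lem:exp_decay_R} and translation invariance: $P_{(i)}\le\exp(-\sigma_1(n-1))$, well below the target. For condition (ii), I use Proposition \ref{prop:clusters_rentrant-sortant_infinis}: for $k$ large, $\inf_{x\in S_k}P_\lambda(|C_x^i(S_k)|=\infty)\ge p>0$ together with the analogous bound for the outgoing cluster; by FKG (Remark \ref{rk:FKG}) both events occur simultaneously with probability $\ge p^2$, and when they do the point belongs to $\widetilde{C}$. Placing translates $o_j=x+(3kj){\rm e}_d$ for $j=0,\ldots,\lfloor(n-1)/(3k)\rfloor$ inside pairwise disjoint translated slabs, the corresponding events are measurable with respect to disjoint families of $T$'s and $e$'s, hence independent, which yields
\begin{displaymath}
P_{(ii)}\le P(\forall j,\ o_j\notin\widetilde{C})\le(1-p^2)^{\lfloor(n-1)/(3k)\rfloor}\le\exp(-c_2 n),
\end{displaymath}
again dominated by the target rate.

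The main work is in $P_{(iii)}$: the probability that some pair $y,z\in B(x,n-1)\cap\widetilde{C}$ fails to be joined by an open path lying inside $B(x,{\rm C}'(n-1))$. Since $y,z\in\widetilde{C}$, Remark \ref{rk:csq-ap2} gives $y\to z$, hence a minimum-length open path $y=y_0,y_1,\ldots,y_{D(y,z)}=z$ exists. The key observation is a midpoint localization: each step shifts $L^\infty$ by at most one, so with $l=n-1$,
\begin{displaymath}
\|y_i-x\|_\infty\le\|y-x\|_\infty+\min(i,D(y,z)-i)\le l+D(y,z)/2,
\end{displaymath}
using whichever of $y,z$ is nearer to $y_i$ along the path. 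Thus if $D(y,z)\le 2({\rm C}'-1)l=2({\rm C}_2 d+1)l$, the shortest path lies entirely inside $B(x,{\rm C}'l)$. Since $D(y,z)\le{\rm C}_2\|y-z\|_1+m^d\le 2d{\rm C}_2 l+m^d$ for $y,z\in B(x,l)$, this holds whenever $m^d\le 2l$. Choosing $m=\lfloor(2l)^{1/d}\rfloor$, Lemma \ref{lem:ap2}(i) gives
\begin{displaymath}
P\bigl(y\to z,\ D(y,z)>{\rm C}_2\|y-z\|_1+m^d\bigr)\le\exp(-\alpha_3 m),
\end{displaymath}
and a union bound over the at most $(2l+1)^{2d}$ pairs produces $P_{(iii)}\le(2l+1)^{2d}\exp(-\alpha_3(2l)^{1/d})\le\exp(-c_3 n^{1/d})$, which determines $\sigma$.

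The main obstacle is the exact calibration between the constant ${\rm C}'={\rm C}_2 d+2$ fixed in Definition \ref{def:k_x} and the sub-exponential bound of Lemma \ref{lem:ap2}(i): the midpoint-localization trick reduces the path-length inflation from $2d{\rm C}_2$ to only ${\rm C}_2 d+1$, leaving precisely $2l$ of slack to absorb the $m^d$-correction with $m\sim l^{1/d}$; this conversion is exactly what produces the sub-exponential rate $n^{1/d}$ in the conclusion. A secondary technical point is the independence argument for (ii), where one must exploit that distinct translated slabs (separated by a gap of width $2k$) use disjoint sets of the underlying variables $T_x$ and $e(x,y)$.
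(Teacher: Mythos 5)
Your proof is correct and follows essentially the same route as the paper's: you split $\{\kappa(x)\ge n\}$ according to which of the three conditions of Definition \ref{def:k_x} fails (at level $n-1$ rather than the paper's level $n$, an immaterial shift), bound (i) by Lemma \ref{lem:exp_decay_R}, bound (ii) by the slab percolation estimate plus FKG and independence across disjoint slabs, and bound (iii) by converting ``no connecting path inside $B(x,{\rm C}'l)$'' into $D(y,z)\ge 2({\rm C}'-1)l\ge {\rm C}_2\|y-z\|_1+2l$ and invoking Lemma \ref{lem:ap2}(i), which is exactly the source of the $n^{1/d}$ rate in the paper as well. The midpoint-localization phrasing and the explicit choice $m=\lfloor(2l)^{1/d}\rfloor$ are just a more detailed rendering of the paper's contrapositive argument.
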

\begin{proof}{Lemma}{lem:k_x-exp_tail}
We show that the probability that any of the 3 conditions
in Definition \ref{def:k_x} is not achieved for $n$  decreases
exponentially in $n^{1/d}$: \par
\noindent
\textit{(i)} By  translation invariance,  we have by  Lemma \ref{lem:exp_decay_R},
\be\label{eq:non_i}
P\left( \partial B_x(n)\cap\left(R_x^{\rm out}\cup R_x^{\rm in}\right)
\not=\emptyset\right) \le  \exp(-\sigma_1 n).
\ee
\noindent
\textit{(ii)}
There exist $k\in\N$,  
  $\sigma_2=\sigma_2(\lambda,d)>0$ such that
for any $n\in\N$, 
\be\label{eq:non_ii}
P(B_x(n) \cap C^\infty=\emptyset) \le  \exp(-\sigma_2\lfloor n/(k+1)\rfloor).
\ee
 Indeed, let $k=k(\lambda,d)$ be large enough for the conclusions 
 of Theorem \ref{prop:clusters_rentrant-sortant_infinis} to hold on the slab $S_k$.
Then we have
\begin{eqnarray*}
& P(B_x(n) \cap C^\infty=\emptyset)
\le P(\forall\,z\in\{ x+ j{\rm e}_1,0\le j\le n\},z\notin C^\infty)\cr
&= P(\forall\,z\in\{ x+ j{\rm e}_1,0\le j\le n\},C_z^{\rm in} 
\hbox{ or } C_z^{\rm out} \hbox{ is finite})
\end{eqnarray*}
We denote by $S_k(l)=\{l(k+1),\cdots,(l+1)(k+1)-1\}\times\Z^{d-1}$ 
for $l\ge 0$ the slab of thickness $k$ to which $z$ belongs. 
If $C_z^{\rm in}$ (or $C_z^{\rm out}$) is finite, so is $C_z^{\rm in}(S_k(l))$ 
(or $C_z^{\rm out}(S_k(l))$). 
Because
$\{ \vert C_z^{\rm in}(S_k(l))\vert =+\infty \}$
and $\{ \vert C_z^{\rm out}(S_k(l))\vert =+\infty \}$ are increasing events
  it follows from  Theorem \ref{prop:clusters_rentrant-sortant_infinis} and
 the FKG inequality   (see 
 Remark \ref{rk:FKG}) that
\begin{eqnarray} 
&\inf_{u\in S_k(l)}P(\vert C_u^{\rm in}(S_k(l))\vert =\vert C_u^{\rm out}(S_k(l))\vert =+\infty)\cr
&\ge \inf_{u\in S_k(l)}\left(P(\vert C_u^{\rm in}(S_k(l))\vert=+\infty)
P(\vert C_u^{\rm out}(S_k(l))\vert =+\infty)\right)>0.\label{eq:bis-infinf}
\end{eqnarray} 
Since events occurring in two different slabs are independent, we have
\begin{eqnarray*}
&& P(\forall\,z\in\{ x+ j{\rm e}_1,0\le j\le n\},z\notin C^\infty)\cr
 &\le & P(\forall\,l\ge 0,\forall\,z\in\{ x+ j{\rm e}_1,0\le j\le n\}\cap S_k(l), \cr
 &&\qquad C_z^{\rm in}(S_k(l)) \hbox{ or } C_z^{\rm out}(S_k(l)) \hbox{ is finite})\cr
  &\le & \left(P(\forall\,z\in\{ j{\rm e}_1,0\le j\le k\},\right.\cr
 &&\left.\qquad C_z^{\rm in}(S_k(0)) \hbox{ or } C_z^{\rm out}(S_k(0)) 
 \hbox{ is finite}\right)^{\lfloor n/(k+1)\rfloor}\cr
  &\le & \exp(-\sigma_2 \lfloor n/(k+1)\rfloor)
 \end{eqnarray*}
with $\sigma_2=\sigma_2(\lambda,d)>0$, independent of $n$, because, 
for $z_0=\lfloor k/2\rfloor{\rm e}_1$, using \eqref{eq:bis-infinf} we have
\begin{eqnarray*}
& P(\exists\,z\in\{ x+ j{\rm e}_1,0\le j\le k\},
|C_z^{\rm in}(S_k(0))|=|C_z^{\rm out}(S_k(0))|=+\infty)\cr
\ge & P(|C_{z_0}^{\rm in}(S_k(0))| = | C_{z_0}^{\rm out}(S_k(0))|=+\infty)
>0.
\end{eqnarray*}  
\noindent
\textit{(iii)} There exists $ \sigma_3=\sigma_3(\lambda,d)>0$ such that
\begin{eqnarray}\label{eq:sigma_3}
& 
P\left(\exists\,
 (y,z) \in (B_x(n) \cap C^\infty)^2,\,y\not\to z \hbox{ within }(B_x({\rm C}'n)\right)\cr
 & \le \exp (- \sigma_3 n^{1/d}).
\end{eqnarray}
Indeed, if no open path from $y$ to $z$ (both in $B_x(n)\cap C^\infty$) is contained in 
$B_x({\rm C}'n)$, then $D(y,z)\ge 2({\rm C}'-1)n$. Given our choice of ${\rm C}'$ 
this implies
that $D(y,z)\ge {\rm C_2}\|y-z\|_1  +n$.  Therefore \eqref{eq:sigma_3} 
follows from 
Proposition \ref{lem:ap2}\textit{(i)}.
\end{proof}
\mbox{} \\ \\
 We define the  (site) neighborhood in $C^\infty$ of $x$  by
\begin{equation}\label{def:calV-x}
{\mathcal V}(x)=B_x(\kappa(x))\cap C^\infty.
\end{equation} 
\begin{remark}\label{rk:calV-a-2-points} 
(a) By Definition \ref{def:k_x}\textit{(ii)}, ${\mathcal V}(x)\not=\emptyset$. \par 
\noindent
(b) By  Definition \ref{def:k_x}\textit{(iii)},
for all $y,z$ in ${\mathcal V}(x)$, there exists 
at least one  open path from 
$y$ to $z$, denoted by $\Gamma^*_{y,z}$, contained in 
 $B_x({\rm C}'\kappa(x))$. If there are several such paths we 
 choose the first one according to some deterministic 
order.
\end{remark}  
We finally 
define an ``edge'' neighborhood $\overline\Gamma(x)$ of $x$: 
\begin{eqnarray}\label{def:barGamma-x}
\overline\Gamma(x)&=&\{(y',z')\subset B_x(\kappa(x)), 
(y',z')\hbox{ open}\}\cup\cr
&&\qquad\{(y',z')\in\Gamma^*_{y,z},y,z\in {\mathcal V}(x)\}.
\end{eqnarray}
 Those neighborhoods satisfy
\begin{equation}\label{eq:borner_calV-x_et_Gamma-x}
{\mathcal V}(x)\subset B_x(\kappa(x));\qquad\overline\Gamma(x)\subset B_x({\rm C}'\kappa(x)).
\end{equation} 
\subsection{ Travel times and  radial limits}\label{subsec:Radial-limits}
 We now come back to the spatial epidemic model. 
 In this subsection, we estimate  the time needed by the epidemic 
 to cover $C^\infty$, taking advantage of the analysis of paths 
 in the percolation model done in Section \ref{sec:appliquer_GM}. 
We first define an approximation for the passage time of the epidemic, then we 
prove the existence of radial limits for this approximation and for the epidemic.
 We will follow for this the spirit of the construction in \cite{MR0978353}.  \\ \\
 By analogy with \cite{MR0624685,MR0978353}  (although neighborhoods in our context
are defined differently),  we   define, for $x,y\in\Z^d$, 
the travel time from ${\mathcal V}(x)$ to ${\mathcal V}(y)$  and the time spent around $x$
to be  (remember \eqref{eq:passage-time-from-x-to-y})
\begin{eqnarray}\label{def:approx-passage-time}
  \widehat{\tau}(x,y)&=&\dsp{\inf_{x'\in{\mathcal V}(x), y'\in{\mathcal V}(y)} \tau(x',y')}
  ;\\\label{def:2-approx-passage-time}
    u(x)&=& 
    \begin{cases}
\dsp{\sum_{(y',z')\in\overline\Gamma(x)} \tau(y',z')}
 &\text{if  $\overline\Gamma(x)\not=\emptyset$,}\\
0 &\text{otherwise.}
\end{cases}
%    \cases{\dsp{\sum_{(y',z')\in\overline\Gamma(x)} \tau(y',z')}&
%if  $\overline\Gamma(x)\not=\emptyset$, \cr
%0 & otherwise.
%} 
\end{eqnarray}   
 By Remarks \ref{rk:csq-ap2}, \ref{rk:calV-a-2-points}\textit{(a)}, 
$\widehat{\tau}(x,y)$ 
is  finite. 
If ${\mathcal V}(x)\cap{\mathcal V}(y)\not=\emptyset$, then $\widehat{\tau}(x,y)=0$.\\ \\
We now show that if $y\in C_x^{\rm out}\setminus R_x^{\rm out}$,  $\widehat{\tau}(x,y)$ approximates 
$\tau(x,y)$. 
\begin{lemma}\label{lem:comparaison-approximation}
For $x\in\Z^d$, if $y\in C_x^{\rm out}\setminus R_x^{\rm out}$, we have  
\begin{equation}\label{eq:comparaison-approximation}
\widehat{\tau}(x,y) \le \tau(x,y)\le u(x)+\widehat{\tau}(x,y)+u(y).
\end{equation}
\end{lemma}
\begin{proof}{Lemma}{lem:comparaison-approximation}
 Let $\Gamma_{x,y}$ be an open  path 
 from $x$ to $y$ such that $\tau(x,y)=\overline{\tau}(\Gamma_{x,y})$. 
 Since $y\notin R_x^{\rm out}$ this path must intersect $C^\infty$. 
 Let $c_1$ and $c_2$ be the first and last points we 
 encounter in  $C^\infty$ when moving from $x$ to $y$ 
along $\Gamma_{x,y}$.  By  Definition \ref{def:k_x}\textit{(i)}, 
$c_1 \in{\mathcal V}(x)$ and $c_2 \in{\mathcal V}(y)$: indeed (for instance for $c_1$), 
either $x\in{C^\infty}$ and $c_1=x$, or the point $a\in\partial
B_x(\kappa(x))\cap\Gamma_{x,y}$ does not belong to $R_x^{\rm out}$ and $c_1$ 
is the first point on $\Gamma_{x,y}$ between
$x$ and $a$; we might have $c_1=c_2$, if 
${\mathcal V}(x)\cap{\mathcal V}(y)\not=\emptyset$.
We have, denoting by $\vee$  the concatenation of paths,
\[%be\label{eq:gamma-en-3}
\Gamma_{x,y}=\Gamma_{x,c_1}\vee\Gamma_{c_1,c_2}\vee\Gamma_{c_2,y}
\]%ee
 where 
 $\Gamma_{x,c_1}$ (resp. $\Gamma_{c_2,y}$) is an open path from 
$x$ to $c_1$ contained in $B_x(\kappa(x))$ (resp. from $c_2$ to $y$ contained in $B_y(\kappa(y))$)
and $\Gamma_{c_1,c_2}$ is an open path from $c_1$ to $c_2$.
We then obtain the first inequality of \eqref{eq:comparaison-approximation} since: 
$$\widehat{\tau} (x,y)\le \overline{\tau}(\Gamma_{c_1,c_2})\le \overline{\tau}(\Gamma_{x,y})=\tau(x,y).$$
To prove the second inequality of \eqref{eq:comparaison-approximation}, 
let $\Gamma_{d_1,d_2}$ be an open path from 
$d_1 \in{\mathcal V}(x)$ to $d_2\in{\mathcal V}(y)$ 
 such that $\overline{\tau}(\Gamma_{d_1,d_2})=\widehat{\tau}(x,y)$.
Since the  open paths $\Gamma_{x,c_1}$ from $x$ to $c_1$ and $\Gamma^*_{c_1,d_1}$ 
(which exists by Remark \ref{rk:calV-a-2-points}\textit{(b)}) from $c_1$ to $d_1$ have edges in 
$\overline{\Gamma}(x)$ (see \eqref{def:barGamma-x}), the  
open path $\Gamma_{x,d_1}=\Gamma_{x,c_1}\vee\Gamma^*_{c_1,d_1}$  from $x$ to $d_1$ satisfies
$\overline{\tau}(\Gamma_{x,d_1})\le u(x)$. Similarly, there is an open path $\Gamma_{d_2,y}$ 
from $d_2$ to $y$ such that $\overline{\tau}(\Gamma_{d_2,y})\le u(y)$.
We conclude with 
$$\tau(x,y)\le \overline{\tau}(\Gamma_{x,d_1})+\overline{\tau}(\Gamma_{d_1,d_2})
+\overline{\tau}(\Gamma_{d_2,y})\le u(x)+\widehat{\tau}(x,y)+u(y).$$
\end{proof}
\mbox{}\\ \\
 We now prove that $\widehat{\tau}(.,.)$ is almost 
subadditive, which will enable us later on in 
Theorem \ref{th:radial-limits} to appeal 
to Kingman's Theorem. 
\begin{lemma}\label{lem:sous-additif} 
For all $x,y,z\in\Z^d$, we have  the \rm{subadditivity property}
\begin{equation}\label{eq:sous-additif}
\widehat{\tau}(x,z)\le\widehat{\tau}(x,y)+u(y)+\widehat{\tau}(y,z).
\end{equation}
\end{lemma}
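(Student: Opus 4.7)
The plan is to build, for any $\varepsilon>0$, an open path from some $x_1\in{\mathcal V}(x)$ to some $z_1\in{\mathcal V}(z)$ whose passage time is bounded by $\widehat{\tau}(x,y)+u(y)+\widehat{\tau}(y,z)+2\varepsilon$, and then let $\varepsilon\to 0$. This is the standard three-piece concatenation argument used to derive subadditivity for approximate first-passage times, with the middle piece living entirely inside the neighborhood of $y$.

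First, I would select approximate minimizers for the two $\widehat\tau$-infima on the right-hand side. Choose $x_1\in{\mathcal V}(x)$, $y_1\in{\mathcal V}(y)$ and an open path $\Gamma_1$ from $x_1$ to $y_1$ with $\overline{\tau}(\Gamma_1)\le\widehat{\tau}(x,y)+\varepsilon$; similarly, choose $y_2\in{\mathcal V}(y)$, $z_1\in{\mathcal V}(z)$ and an open path $\Gamma_3$ from $y_2$ to $z_1$ with $\overline{\tau}(\Gamma_3)\le\widehat{\tau}(y,z)+\varepsilon$. All the quantities at play are finite, since by construction ${\mathcal V}(\cdot)\subset\widetilde C$ and any two points of $\widetilde C$ are connected by open paths (Remark \ref{rk:csq-ap2}); the outer infima over $x'\in{\mathcal V}(x)$ and $y'\in{\mathcal V}(y)$ are over finite sets, and the inner infima over open paths are approximated to within $\varepsilon$.

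The crucial step is to bridge $y_1$ and $y_2$ inside the neighborhood of $y$. Since both belong to ${\mathcal V}(y)=B(y,\kappa(y))\cap\widetilde C$, condition \textit{(iii)} of Definition \ref{def:k_x} produces the designated open path $\Gamma^*_{y_1,y_2}$ from $y_1$ to $y_2$ contained in $B(y,{\rm C}'\kappa(y))$; by the very definition \eqref{def:barGamma-x} of $\overline\Gamma(y)$, every edge of $\Gamma^*_{y_1,y_2}$ belongs to $\overline\Gamma(y)$, so its passage time is dominated by $u(y)$. Concatenating, $\Gamma:=\Gamma_1\vee\Gamma^*_{y_1,y_2}\vee\Gamma_3$ is an open path from $x_1$ to $z_1$ and
$$\tau(x_1,z_1)\le\overline{\tau}(\Gamma)\le\widehat{\tau}(x,y)+u(y)+\widehat{\tau}(y,z)+2\varepsilon.$$
Since $x_1\in{\mathcal V}(x)$ and $z_1\in{\mathcal V}(z)$ we have $\widehat{\tau}(x,z)\le\tau(x_1,z_1)$, and \eqref{eq:sous-additif} follows upon sending $\varepsilon\to 0$.

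The only genuine content — already furnished by Section \ref{subsec:widetildeC} — is the existence of the bridging path $\Gamma^*_{y_1,y_2}$ with edges in $\overline\Gamma(y)$. Conditions \textit{(ii)} and \textit{(iii)} of Definition \ref{def:k_x} were engineered precisely to make this concatenation work, which is why there is no real obstacle here; everything else is routine gluing of paths together with the standard $\varepsilon$-argument on the infima.
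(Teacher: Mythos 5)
Your proof is correct and follows essentially the same route as the paper's: concatenate a near-optimal path from ${\mathcal V}(x)$ to ${\mathcal V}(y)$, the designated bridge $\Gamma^*$ inside ${\mathcal V}(y)$ whose passage time is bounded by $u(y)$ via \eqref{def:barGamma-x}, and a near-optimal path from ${\mathcal V}(y)$ to ${\mathcal V}(z)$. The only cosmetic difference is that you work with $\varepsilon$-approximate minimizers and let $\varepsilon\to 0$, whereas the paper picks exact minimizing paths; both are fine.
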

\begin{proof}{Lemma}{lem:sous-additif}
Let $\Gamma_{a,b}$ be an open path from $a\in{\mathcal V}(x)$ to 
$b\in{\mathcal V}(y)$  such that 
 $\widehat{\tau}(x,y)=\overline{\tau}(\Gamma_{a,b})$. 
 Similarly, let $\Gamma_{c,d}$ be an open path  from
$c\in {\mathcal V}(y)$ to $d\in{\mathcal V}(z)$  such that 
 $\widehat{\tau}(y,z)=\overline{\tau}(\Gamma_{c,d})$ (we 
 might have $a=b$, $c=d$ or $b=c$). Since both $b$ and $c$ 
 are in ${\mathcal V}(y)$ there exists an open path 
 $\Gamma^*_{b,c}$ from $b$ to $c$ such that 
 $\overline{\tau}(\Gamma^*_{b,c})\le u(y)$ 
 (see Remark \ref{rk:calV-a-2-points}\textit{(b)} and \eqref{def:barGamma-x}). 
 The lemma then follows since the concatenation of these three paths is
 an open path from a point of ${\mathcal V}(x)$ to a point of ${\mathcal V}(z)$ and 
$$\widehat{\tau}(x,z)\le \overline{\tau}(\Gamma_{a,b}) 
+ \overline{\tau}(\Gamma^*_{b,c})+\overline{\tau}(\Gamma_{c,d})\le \widehat{\tau}(x,y)
+u(y)+\widehat{\tau}(y,z).$$
\end{proof}
\mbox{}\\ \\
 We introduce a new notation,  for the length of the shortest path between two neighborhoods.   
For $x,y\in \Z^d$, let 
\be\label{eq:barD}
\overline{D}(x,y)=
\inf_{x'\in {\mathcal V}(x),y'\in {\mathcal V}(y)} D(x',y').
\ee
Note that unlike $D(x,y)$, $\overline{D}(x,y)$
 is always finite.  Next proposition corresponds 
 to Proposition \ref{lem:ap2}\textit{(i)}
 for $\overline{D}(x,y)$ instead of $D(x,y)$. 
 It will be used in 
 Lemma \ref{lem:regularite-approx-tau} which follows. 
\begin{proposition}\label{lem:bar-D}
There exist constants ${\rm C}_4$ and $\alpha_4>0$ such that
$$P(\overline{D}(x,y)\ge {\rm C_4}\|x-y\|_1 +n)\le \exp(-\alpha_4 n^{1/d}),
\qquad \forall\, x,y \in \Z^d,n\in \N.$$
\end{proposition}
\begin{proof}{Proposition}{lem:bar-D}
 Let  ${\rm C_2}$ be as in Lemma \ref{lem:APp} and Proposition \ref{lem:ap2}. 
Then 
\begin{eqnarray*}
&&P(\overline{D}(x,y)\ge {\rm C_2}\|x-y\|_1 +(2d+1){\rm C_2}n)\cr\le&& P(\kappa(x)>n)+P(\kappa(y)>n)\cr
&+&P(\overline{D}(x,y)\ge {\rm C_2}\|x-y\|_1 +(2d+1){\rm C_2}n,\kappa(x)\le n, \kappa(y)\le n)\cr
\le&& P(\kappa(x)>n)+P(\kappa(y)>n)\cr
&+&\sum_{x'\in B_x(n),y'\in B_y(n)}P(D(x',y')\ge {\rm C_2}\|x-y\|_1 +(2d+1){\rm C_2}n, x'\rightarrow y' )\cr
  \le&&  P(\kappa(x)>n)+P(\kappa(y)>n)\cr
&+&\sum_{x'\in B_x(n),y'\in B_y(n)}P(D(x',y')\ge {\rm C_2}\|x'-y'\|_1 +{\rm C_2}n, x'\rightarrow y').
\end{eqnarray*}  
The result follows from Proposition \ref{lem:ap2}
and  Lemma \ref{lem:k_x-exp_tail}. 
\end{proof}
\mbox{}\\ \\
Of course, the random variables $u(x)$ and $\widehat{\tau}(x,y)$ 
are almost surely finite. 
But we will need  later on repeatedly  
a better control of their size, provided by our next lemma.
\begin{lemma}\label{lem:regularite-approx-tau}
For all $x,y\in\Z^d$, $r\in\N\setminus\{0\}$, 
$u(x)$ and $\widehat{\tau}(x,y)$ have a finite $r$-th moment.
\end{lemma}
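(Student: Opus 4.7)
The plan is to bound each quantity by a sum of $e(u,v)$-values over edges lying in a random box whose size is controlled by $\kappa$ and $\overline{D}$, and then apply Cauchy--Schwarz to combine the polynomial moments of such sums with the sub-exponential tails of Lemmas \ref{lem:k_x-exp_tail} and \ref{lem:bar-D}. First I treat $u(x)$: by \eqref{eq:borner_calV-x_et_Gamma-x}, $\overline\Gamma(x)\subset B(x,{\rm C}'\kappa(x))$, so, comparing with the sum of $e$ on \emph{all} edges inside this box,
\[
u(x)\;\le\; S_{\kappa(x)},\qquad S_n:=\sum_{\substack{u,v\in B(x,{\rm C}'n)\\ u\sim v}}e(u,v).
\]
Since $S_n$ is a sum of $O(n^d)$ independent exponential$(\lambda)$ variables, convexity yields $E[S_n^{2r}]\le K_r n^{2rd}$. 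Partitioning over the values of $\kappa(x)$ and applying Cauchy--Schwarz,
\[
E[u(x)^r]\;\le\;\sum_{n\ge 1}\bigl(E[S_n^{2r}]\bigr)^{1/2}\bigl(P(\kappa(x)\ge n)\bigr)^{1/2}\;\le\;K_r^{1/2}\sum_{n\ge 1}n^{rd}\exp\!\bigl(-\tfrac{\sigma}{2}n^{1/d}\bigr)<\infty.
\]

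For $\widehat\tau(x,y)$, set $Z=\max\{\kappa(x),\kappa(y),\overline{D}(x,y)\}$. On $\{Z\le R\}$ there exist $x'\in\mathcal{V}(x)$, $y'\in\mathcal{V}(y)$ and an open path $\Gamma^\sharp$ from $x'$ to $y'$ using at most $R$ bonds; since $x'\in B(x,R)$ and $\Gamma^\sharp$ has at most $R$ edges, every vertex $v\in\Gamma^\sharp$ satisfies $\|v-x\|_\infty\le\|v-x'\|_1+\|x'-x\|_\infty\le 2R$, so $\Gamma^\sharp\subset B(x,2R)$. Using Remark \ref{rk:csq-ap2} to ensure $\tau(x',y')<\infty$ and bounding $\widehat\tau(x,y)\le\tau(x',y')\le\overline\tau(\Gamma^\sharp)$ by the non-negative sum over all edges of $B(x,2R)$ gives $\widehat\tau(x,y)\le T_R$ on $\{Z\le R\}$, where
\[
T_R:=\sum_{\substack{u,v\in B(x,2R)\\ u\sim v}}e(u,v),\qquad E[T_R^{2r}]=O(R^{2rd}).
\]
Cauchy--Schwarz again yields
\[
E[\widehat\tau(x,y)^r]\;\le\;\sum_{R\ge 0}\bigl(E[T_R^{2r}]\bigr)^{1/2}\bigl(P(Z\ge R)\bigr)^{1/2},
\]
and for $R\ge 2{\rm C}_4\|x-y\|_1$ Lemmas \ref{lem:k_x-exp_tail} and \ref{lem:bar-D} combine to give $P(Z\ge R)\le 3\exp(-cR^{1/d})$ for some $c>0$, which overwhelms the $R^{rd}$ factor; the finitely many remaining terms with $R<2{\rm C}_4\|x-y\|_1$ are clearly finite.

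The delicate step is the second one: we must produce a deterministic bound for $\widehat\tau(x,y)$ that depends only on data inside a \emph{local} box, so that its moments can be estimated via independent exponentials. This is exactly where the event $\{\overline{D}(x,y)\le R\}$ is crucial, together with the elementary geometric observation above that a path of $\le R$ edges starting in $B(x,R)$ stays in $B(x,2R)$. Once this localization is in place, the Cauchy--Schwarz computation is routine.
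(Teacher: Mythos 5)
Your proof is correct and follows essentially the same route as the paper: bound $u(x)$ and $\widehat\tau(x,y)$ by sums of edge passage times over a box of random size controlled by Lemmas \ref{lem:k_x-exp_tail} and \ref{lem:bar-D}, then combine the polynomial moment growth of those sums with the sub-exponential tail of the box size via Cauchy--Schwarz. The paper packages this as an abstract statement about moments of $\sum_{i=1}^N X_i$ with $N$ and the $X_i$'s having all moments finite, whereas you carry out the partition over the box size directly, but the argument is the same.
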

\begin{proof}{Lemma}{lem:regularite-approx-tau}
By Lemma  \ref{lem:k_x-exp_tail}, 
$u(x)$ is bounded above by a sum of passage times 
$e(y,z)$ with $y$ and $z$ in the box $B_x(Y)$, 
where $Y$ is a random variable whose moments are all finite. 
 By Lemmas  \ref{lem:k_x-exp_tail}  and  \ref{lem:bar-D} 
 the same happens to  $\widehat{\tau}(x,y)$
  (if $x'\in {\mathcal V}(x),y'\in {\mathcal V}(y)$ are the sites that achieve
 $\overline{D}(x,y)$, then $\widehat{\tau}(x,y)\le {\tau}(x',y')$).  
 Therefore it suffices to show that if
 $(X_i,i\in \N)$ is a sequence of i.i.d. random variables and 
 $N$ is a random variable taking values in $\N$, then the 
 moments of  $\sum_{i=1}^N X_i$ 
 are all finite if it is the case for both the $X_i$'s and $N$.
  To prove this write:
 \begin{eqnarray*}
E(\vert \sum_{i=1}^N X_i\vert^r)&= &\sum_{n=1}^{\infty}E(\vert X_1+\dots+X_n\vert^r {\bf 1}_{\{N=n\}})\cr
&\leq&\sum_{n=1}^{\infty}[E(\vert X_1+\dots+X_n\vert^{2r})P(N=n)]^{1/2} \cr
&\leq&\sum_{n=1}^{\infty}[E(\vert X_1\vert +\dots+\vert X_n\vert)^{2r}P(N=n)]^{1/2}\cr
&\leq&\sum_{n=1}^{\infty} [n^{2r}{\rm C}_{2r}P(N=n)]^ {1/2}
 \end{eqnarray*}
where the  second line  
comes from Cauchy-Schwartz' inequality,
the factor $n^{2r}$ counts the number of terms in the  development of 
$(\vert X_1\vert +\dots+\vert X_n\vert)^{2r}$ and the constant ${\rm C}_{2r}$  
depends on the distribution of the  $X_i$'s.
As $N$ has all its  moments finite $P(N=n)$ decreases faster than  $n^{-2r-4}$ 
and the sum is finite.
\end{proof}
\mbox{}\\ \\ 
We now construct a process $(\vartheta_\cdot)$ which  will be  subadditive in every
direction, and  will have  a.s., by Kingman's Theorem, a radial limit denoted by $\mu$. We will then 
check that $\widehat\tau(o,\cdot)$  also has, in every
direction, the same radial limit, and we will extend this conclusion
to  $\tau(o,\cdot)$  on the set $C_o^{\rm out}$ of sites that have ever been infected. 
Hence we first prove 
\begin{theorem}\label{th:radial-limits}
For all $z\in\Z^d$, there exists $\mu(z)\in\R^+$
such that almost surely
\begin{eqnarray}\label{eq:lim-widehat_tau}
&&\lim_{n\to +\infty}\frac {\widehat{\tau}(o,nz)}n =\mu(z) \qquad\hbox{   and  }\\
\label{eq:radial-limit_2}
&&\lim_{n\to +\infty}\left[\frac{\tau (o,nz)}n - \mu(z)\right]{\bf 1}_{\{nz\in C_o^{\rm out}\}}=0.
\end{eqnarray}
\end{theorem}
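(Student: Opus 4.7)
The plan is to apply Kingman's subadditive ergodic theorem to a process built out of $\widehat{\tau}$ and $u$, then transfer the conclusion to $\tau$ via the sandwich estimate in Lemma \ref{lem:comparaison-approximation}.

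\textbf{Step 1: a subadditive process.} For $z\in\Z^d$ fixed, set
\[
\vartheta_{m,n}(z) = u(mz) + \widehat{\tau}(mz,nz),\qquad 0\le m<n.
\]
Lemma \ref{lem:sous-additif} gives
\[
\widehat{\tau}(mz,\ell z)\le \widehat{\tau}(mz,nz)+u(nz)+\widehat{\tau}(nz,\ell z)
\]
for $m<n<\ell$, so adding $u(mz)$ on both sides yields $\vartheta_{m,\ell}(z)\le \vartheta_{m,n}(z)+\vartheta_{n,\ell}(z)$.

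\textbf{Step 2: stationarity, integrability and ergodicity.} Because the whole percolation environment $(T_x,e(x,y))$ is i.i.d.\ on $\Z^d$, the shift by $z$ is measure-preserving and in fact ergodic (even mixing) on the probability space, and it sends $\vartheta_{m,m+k}(z)$ to $\vartheta_{m+1,m+1+k}(z)$; in particular $\{\vartheta_{m,m+k}(z)\}_{m\ge 0}$ is stationary for each $k$. Lemma \ref{lem:regularite-approx-tau} ensures that each $u(mz)$ and $\widehat{\tau}(mz,nz)$ has moments of every order, so in particular $E[\vartheta_{0,n}(z)]<\infty$. Kingman's subadditive ergodic theorem therefore produces a nonnegative deterministic limit
\[
\mu(z) = \lim_{n\to+\infty}\frac{\vartheta_{0,n}(z)}{n} = \inf_n \frac{E[\vartheta_{0,n}(z)]}{n} \qquad\text{a.s. and in }L^1.
\]
Since $u(o)/n\to 0$ deterministically, this is exactly \eqref{eq:lim-widehat_tau}.

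\textbf{Step 3: transfer to $\tau$.} On the event $\{nz\in C_o^o\}\cap\{nz\notin R_o^o\}$, Lemma \ref{lem:comparaison-approximation} gives
\[
0\le \tau(o,nz)-\widehat{\tau}(o,nz)\le u(o)+u(nz).
\]
By Lemma \ref{lem:regularite-approx-tau} the variable $u(o)$ has finite moments of every order, hence by translation invariance $P(u(nz)>\eps n)\le \eps^{-r}n^{-r}E[u(o)^r]$ for every $r$; taking $r=2$ and summing over $n$, Borel--Cantelli gives $u(nz)/n\to 0$ a.s. Combined with Lemma \ref{lem:exp_decay_R} and Borel--Cantelli (which shows $R_o^o$ is a.s.\ bounded, so $nz\notin R_o^o$ for all large $n$), we conclude that
\[
\left[\frac{\tau(o,nz)}{n}-\frac{\widehat{\tau}(o,nz)}{n}\right]\mathbf{1}_{\{nz\in C_o^o\}} \longrightarrow 0 \qquad \text{a.s.},
\]
and \eqref{eq:radial-limit_2} follows from \eqref{eq:lim-widehat_tau}.

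\textbf{Main obstacle.} The only delicate point is the $L^1$ bound needed both for Kingman's theorem and for the $u(nz)/n\to 0$ argument. Everything boils down to Lemma \ref{lem:regularite-approx-tau}, which in turn relies on the sub-exponential tail of $\kappa(x)$ (Lemma \ref{lem:k_x-exp_tail}) and the tail estimate for $\overline D(x,y)$ (Lemma \ref{lem:bar-D}); without these the subadditive process is not integrable. Checking the sandwich is only meaningful on $\{nz\notin R_o^o\}$, but the a.s.\ boundedness of the root $R_o^o$ disposes of this issue for large $n$.
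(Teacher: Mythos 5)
Your proposal is correct and follows essentially the same route as the paper: Kingman's subadditive ergodic theorem applied to a $u$-corrected version of $\widehat{\tau}$ (the paper uses $\vartheta_z(m,n)=\widehat{\tau}(mz,nz)+u(nz)$, you attach $u$ at the starting point, which changes nothing), with integrability supplied by Lemma \ref{lem:regularite-approx-tau}, followed by the transfer to $\tau$ via the sandwich of Lemma \ref{lem:comparaison-approximation} and the a.s.\ finiteness of $R_o^o$.
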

\begin{proof}{Theorem}{th:radial-limits} 
\textit{(i)} 
For all $z\in\Z^d$, $(m,n)\in\N^2$, let
\begin{equation}\label{eq:vartheta_theta}
\vartheta_z(m,n)=\widehat{\tau}(mz,nz)+u(nz).
\end{equation}
The process $(\vartheta_z(m,n))_{(m,n)\in\N^2}$ satisfies the 
hypotheses of Kingman' subadditive ergodic theorem 
(see \citealp[Theorem VI.2.6]{MR2108619}) by \eqref{eq:sous-additif}. 
Hence  (noticing also that $\vartheta_z(0,n)=\vartheta_{nz}(0,1)$) there exists $\mu(z)\in\R^+$ such that
\begin{eqnarray}\label{eq:lim-vartheta_theta}
\lim_{n\to +\infty}\frac 1n \vartheta_z(0,n)
&=&\lim_{n\to +\infty} E\left(\frac{\vartheta_z(0,n)}{n}\right)
=\lim_{n\to +\infty} E\left(\frac{\vartheta_{nz}(0,1)}{n}\right)\cr
&=&\inf_{n\in\N} E\left(\frac{\vartheta_z(0,n)}{n}\right)
=\inf_{n\in\N} E\left(\frac{\vartheta_{nz}(0,1)}{n}\right)
=\mu(z)\,
\end{eqnarray} 
 a.s. and in $L^1$.  Since the random variables  $(u(z):z\in \Z^d)$ are identically distributed,
it follows from Lemma \ref{lem:regularite-approx-tau} and Chebychev's inequality that \newline
$\sum_{n=0}^\infty P(u(nz)>n\eps)<+\infty$
 for all $\eps>0$, so that by Borel-Cantelli's Lemma
\begin{equation}\label{eq:lim-u}
\lim_{n\to +\infty}\frac {u(nz)}n =0,\,\mbox{ a.s.} 
\end{equation}
 Thus by  \eqref{eq:vartheta_theta}, \eqref{eq:lim-vartheta_theta}, \eqref{eq:lim-u}  
 we have \eqref{eq:lim-widehat_tau} for all $z\in\Z^d$.\\ \\ 
\textit{(ii)} Since $ R_o^{\rm out}$ is a.s. finite,  if $nz\in C_o^{\rm out}$, then   
$nz\in C_o^{\rm out}\setminus R_o^{\rm out}$ for $n$ large enough. Hence,  from 
 Lemma \ref{lem:comparaison-approximation}, for $n$ large enough we have
\[
\left|\dsp{\frac{\tau(o,nz)}n}-\mu(z)\right|{\bf 1}_{\{nz\in C_o^{\rm out}\setminus R_o^{\rm out}\}}
 \le \dsp{\frac{u(o) + u(nz)}n}+\left|\dsp{\frac{\widehat{\tau}(o,nz)}n}-\mu(z)\right|
\]
and we conclude  that \eqref{eq:radial-limit_2} is satisfied  by \eqref{eq:lim-u} and \eqref{eq:lim-widehat_tau}. 
\end{proof}
 \subsection{Extending $\mu$}\label{subsec:extending_mu}
 We have proved the existence of a linear propagation speed 
in every direction of $\Z^d$.
Now, to derive an asymptotic shape result, in particular 
for the approximating  travel  times
$(\widehat\tau(x,y),x,y\in\Z^d)$, we need to 
 extend $\mu$ from $\Z^d$ to 
 a Lipschitz, convex and homogeneous function on $\R^d$. 
The asymptotic shape of the epidemic will be given
by the convex set $D$ defined   in \eqref{def:D} below. 
As a first step, we prove properties
of $\mu$ on $\Z^d$. 
\begin{lemma}\label{ajoute1}
 The function $\mu$ satisfies the following properties for all $x,y\in \Z^d$, $k \in \N$:
 \newline
(i)  $\displaystyle{\mu(x)=\lim_{n\to+\infty} E\left(\frac{\widehat \tau (o,nx)}{n}\right)}$,\newline
(ii) $\mu(x+y)\leq \mu(x)+\mu(y)$,\newline
(iii) $\mu(x)=\mu(-x)$,\newline
 (iv) $\mu({\rm e}_i)=\mu({\rm e}_\ell),\,\forall i,\ell\in \{1,\dots,d\}$,\newline
 (v) $\mu(kx)=k\mu(x)$,\newline
(vi) $\mu(x)\leq \mu({\rm e}_1)\|x\|_1$.
\end{lemma}
\begin{proof}{Lemma}{ajoute1}
Since $\vartheta_x(0,n)=\widehat{\tau}(o,nx)+u(nx)$, part \textit{(i)} 
follows from \eqref{eq:lim-vartheta_theta} and  \eqref{eq:lim-u}.
 To prove part \textit{(ii)} write:
\begin{eqnarray*}
&&\mu(x+y)=\lim_{n\to+\infty} E\left(\frac{\widehat \tau (o,n(x+y))}{n}\right)\\
&\leq& \lim_{n\to+\infty} E\left(\frac{\widehat \tau (o,nx)}{n}\right)
+ \lim_{n\to+\infty} E\left(\frac{\widehat \tau (nx,n(x+y))}{n}\right)
+\lim_{n\to+\infty} E\left(\frac{u(nx)}{n}\right)\\
&=& \lim_{n\to+\infty} E\left(\frac{\widehat \tau (o,nx)}{n}\right)
+ \lim_{n\to+\infty} E\left(\frac{\widehat \tau (nx,n(x+y))}{n}\right)\\
&=&\mu(x)+\mu(y),
\end{eqnarray*}
where the first equality follows from part \textit{(i)},  
the inequality from  \eqref{eq:sous-additif},  the second equality  from \eqref{eq:lim-u} 
 and the third one  from part \textit{(i)} and translation invariance of 
 $\widehat \tau$. Parts \textit{(iii)}--\textit{(iv)} follow immediately from 
 part \textit{(i)} and the corresponding properties of $\widehat \tau(o,x)$.
To prove part \textit{(v)} 
write:
$$\mu(kx)
=\lim_{n\to +\infty} E\left(\frac{\vartheta_{nkx}(0,1)}{n}\right)
=k\lim_{n\to +\infty} E\left(\frac{\vartheta_{nkx}(0,1)}{nk}\right)
=k\mu(x),$$
where the first and third equalities follow from \eqref{eq:lim-vartheta_theta}. 
Finally, part \textit{(vi)} follows from parts \textit{(ii)--(iv)}.
\end{proof}
\mbox{}\\ \\
 Next corollary extends Lemma \ref{ajoute1}\textit{(iii)}--\textit{(iv)}.
\begin{corollary}\label{cor:per}
 For any permutation
$\sigma$ of $\{1,\cdots,d\}$, any 
$y=(y_1, y_2,\cdots,y_d)\in\Z^d$ and any choice of the signs $\pm$,
\[ 
\mu(\pm y_{\sigma(1)}, \pm y_{\sigma(2)},\cdots,\pm y_{\sigma(d)})=\mu(y_1, y_2,\cdots,y_d).
\]
\end{corollary}

\begin{proof}{Corollary}{cor:per}
Clearly $\widehat \tau (o,(y_1,y_2,\dots,y_d))$ has the same distribution as 
$\widehat \tau(o,(\pm y_{\sigma(1)}, \pm y_{\sigma(2)},\cdots,\pm y_{\sigma(d)}))$ 
for any choice of the signs and any permutation $\sigma$, hence the corollary follows
 from  Lemma \ref{ajoute1}\textit{(i)}.
\end{proof}
\begin{lemma}\label{ajoute2}
Let $\gamma^*=\mu({\rm e}_1)$. Then $\gamma^*$ is a Lipschitz constant for $\mu$.
For all $u,v\in \Z^d$ we have
$$\vert \mu(u)-\mu(v)\vert \leq \gamma^* \|u-v\|_1. $$
\end{lemma}
\begin{proof}{Lemma}{ajoute2}
 Let $y=u-v,\,x=v$.  We have  
$$\mu(u)-\mu(v)=\mu(x+y)-\mu(x)\leq \mu(y)=\mu(u-v)\leq \mu({\rm e}_1)\|u-v\|_1,$$
where the inequalities follow from Lemma \ref{ajoute1}\textit{(ii)} and \textit{(vi)}.
Similarly, taking $x=u,\,y=v-x$ gives
$$\mu(v)-\mu(u)\leq \mu({\rm e}_1)\|v-u\|_1,$$
and the lemma follows.  
\end{proof}
\mbox{}\\ \\
\noindent
 In a second step, we extend $\mu$ to $\R^d$ and we introduce the set $D$.
\begin{proposition}\label{lem:definir_phi}
There exists an extension of $\mu$ to $\R^d$, which is 
Lipschitz  with Lipschitz constant $\gamma^*$ given by Lemma  \ref{ajoute2}, 
convex and homogeneous 
on $\R^d$.  Moreover,
$\mu(x) = 0$ if and only if $x=o$ and the set
\be\label{def:D} 
D=\{x\in\R^d:\mu(x)\le 1\}
\ee
is convex, bounded and contains an open ball centered at $o$.
\end{proposition}
\begin{proof}{Proposition}{lem:definir_phi}
We start by extending $\mu$ to $\Q^d$. For $x\in\Q^d\setminus\{o\}$ let 
\beq\label{eq:Nx}
N_x&=&\min\{k\ge 1,k\in \N :kx\in\Z^d\}\quad \hbox{  and}\\     
\label{def:bis-mu-x}\mu(x)&=&\frac{\mu(N_x x)}{N_x}.\eeq
We now prove that this extension is homogeneous: let $\alpha\in \Q$ be  positive and 
let $x\in \Q^d,\,x\neq o$. Then, there exist
$k_1,k_2\in \N$ multiples of $N_x$ and $N_{\alpha x}$ respectively, such that  
$k_1x,k_2\alpha x\in \Z^d$ and $k_1x=k_2\alpha x$.
Write
\begin{eqnarray*}
\mu(\alpha x)
=\frac{\mu(N_{\alpha x}\alpha x )}{N_{\alpha x}}=\frac{\mu(k_2 \alpha x)}{k_2}
=\frac{\mu(k_1 x)}{k_2}=\frac{k_1 }{k_2}\frac{\mu(k_1 x)}{k_1}=\alpha \frac{\mu(N_x x)}{N_x}=\alpha \mu(x),
\end{eqnarray*}
using \eqref{def:bis-mu-x} for the  first equality, Lemma \ref{ajoute1}\textit{(v)} 
for the  second and  fifth ones.\par
\noindent
  To prove that $\mu$ is Lipschitz on $\Q^d$, let $x,y\in \Q^d\setminus\{o\}$. Then,
 \begin{eqnarray*}
 \left\vert \mu(x)-\mu(y)\right\vert&=&\left\vert \frac{\mu(N_x x)}{N_x}-\frac{\mu(N_y y)}{N_y}\right\vert
=\left\vert \frac{\mu(N_yN_x x)}{N_yN_x}-\frac{\mu(N_xN_y y)}{N_xN_y}\right\vert\\
&=& \frac{\vert \mu(N_xN_y x)-\mu(N_xN_y y)\vert}{N_xN_y}\\
&\leq & \frac{\gamma^*  \|N_xN_y x-N_xN_y y\|_1}{N_xN_y}=\gamma^*  \| x- y\|_1,
\end{eqnarray*}
using Lemma \ref{ajoute1}\textit{(v)} for the second equality and Lemma \ref{ajoute2} for the inequality.\\ \\
To prove that $\mu$ is convex on  $\Q^d$, 
take $x,y \in \Q^d$ and $\alpha \in \Q \cap (0,1)$.
Then let $k_1,k_2$ be elements in $\N$ such that 
$k_1\alpha \in \N$, $k_2 x \in \Z^d$, $k_2 y \in \Z^d$ and write: 
\begin{eqnarray*}
&&\mu (\alpha x+(1-\alpha)y)=\lim_{n \to+\infty}E\Big( \frac{\widehat \tau(o,n\alpha x+n(1-\alpha)y)}{n}\Big)\cr
&=&\lim_{n \to+\infty} E\Big(\frac {\widehat \tau(o,nk_1\alpha k_2x+nk_1(1-\alpha)k_2y)}{nk_1k_2}\Big)\cr
&\leq&\lim_{n \to+\infty} E\Big(\frac {\widehat \tau(o,nk_1\alpha k_2x)+\widehat \tau (o,nk_1(1-\alpha)k_2y)+
u(nk_1\alpha k_2x)}{nk_1k_2}\Big)\cr
&=&\lim_{n \to+\infty} E\Big(\frac {\widehat \tau(o,nk_1\alpha k_2x)+\widehat \tau (o,nk_1(1-\alpha)k_2y)}{nk_1k_2}\Big)\cr
&=&\frac{\mu(k_1k_2\alpha x)+\mu(k_1k_2(1-\alpha)y)}{k_1k_2}\cr
&=&\alpha \mu(x)+(1-\alpha)\mu(y),
\end{eqnarray*}
where the first equality follows from Lemma  \ref{ajoute1}\textit{(i)},  the inequality from  Lemma \ref{lem:sous-additif}, the third equality
 from \eqref{eq:lim-u}, 
 the fourth 
from Lemma \ref{ajoute1}\textit{(i)} and the last one from the homogeneity of $\mu$ on $\Q^d$.\\ \\
Because $\mu$ is homogeneous, Lipschitz and convex on $\Q^d$, we
can extend $\mu$ by continuity to $\R^d$.\\ \\
To prove that $\mu(x)>0$ if $x\neq o$ we argue by contradiction: assume $\mu(x)=0$ and without loss of generality that
$x=(x_1,\dots,x_d)$ with $x_1\neq 0$. First note that since $\mu$ is Lipschitz and homogeneous, 
the conclusion of Corollary \ref{cor:per} also holds for any $(x_1,\dots,x_d)\in \R^d$,  then write
\begin{eqnarray*}
\mu(2x_1,0,\cdots,0)&=&\mu(2x_1,0,\cdots,0)- \mu(x_1,x_2,\cdots,x_d)\\
&\le&\mu(x_1,-x_2,\cdots,-x_d)=0,
\end{eqnarray*}
using Lemma \ref{ajoute1}\textit{(ii)} for the inequality, and 
Corollary \ref{cor:per} with the assumption $\mu(x) = 0$ for the last equality. 
Then since  $\mu$ is homogeneous we get $\mu({\rm e}_1)=0$. 
However, considering a standard first passage percolation model 
with passage times $e(z,y)$ and adding a `tilde' to 
quantities associated to this model, we have $\widetilde\tau(o,z)\leq \tau(o,z)$ a.s. for all  $z\in \Z^d$. 
Since by \cite[Theorem (2.18)]{MR0876084},
$$\lim_{n\to+\infty} \widetilde \tau (o,n{\rm e}_1)=\widetilde \mu ({\rm e}_1),$$ 
it follows from 
\eqref{eq:radial-limit_2}   that $\widetilde\mu({\rm e}_1)\leq \mu({\rm e}_1)=0$.
 But  from  \cite[Theorems (1.7) and (1.15)]{MR0876084}
we get $\widetilde\mu({\rm e}_1)>0$, thus reaching a contradiction.\\ \\
The convexity of $\mu$ implies that $D$ is convex. We prove by contradiction that
$D$ contains an open ball centered at $o$: otherwise, there exists a sequence $(x_n)_{n\in\N}$
such that $x_n\notin D,\,\lim_{n\to+\infty}x_n=0$; therefore on the one hand $\mu(x_n)>1$, and on the other
hand $\lim_{n\to+\infty}\mu(x_n)=0$ because $\mu(o)=0$ and $\mu$ is continuous, hence a contradiction.\\ \\
Finally we argue again by contradiction to prove that the set $D$ is bounded: 
otherwise  there would exist  a sequence $(y_n)_{n\in\N}$ with $y_n\in D$ and $\|y_n\|_1>n$.
Then  $x_n=y_n/\|y_n\|_1$ satisfies $\|x_n\|_1=1$, and, 
since  $\mu$ is homogeneous, 
 $\lim_{n\to+\infty}\mu(x_n)=0$. By compactness  $(x_n)_{n\in\N}$
 has a converging subsequence to some $x$ such that $\mu(x)=0$ with $\|x\|_1=1$; 
 since we have already proved there is no such $x$ 
we get a contradiction.
\end{proof}
\subsection{Behavior of $\widehat \tau$}\label{subsec:behavior-widehat_tau}
 Our next result establishes how $\widehat \tau(o,z)$ grows for $z\in\Z^d$.  
\begin{theorem}\label{th:widehat_t-serie}
There exist $K=K(\lambda,d)>0$ and $\alpha>0$ such that 
\begin{eqnarray*}
P(\widehat\tau(o,z)>K\|z\|_\infty)&\leq& \exp(-\alpha(\|z\|_\infty^{1/d}),\ \ \forall \ z\in \Z^d,\cr
P(\widehat\tau(o,z)>K(\|z\|_\infty+n))&\leq& \exp(-\alpha n^{1/d}),\ \ \forall \ z\in \Z^d,n\in \N,\cr
\sum_{z\in\Z^d} P({\widehat\tau}(o,z)>K\|z\|_\infty)&<&+\infty.
\end{eqnarray*} 
\end{theorem}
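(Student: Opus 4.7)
The three estimates are linked. The second has the same structure as the first (tail parameter $n$ in place of $\|z\|_\infty$), and the third follows from the first by shell-counting: $|\partial B_k|=O(k^{d-1})$, so $\sum_k k^{d-1}\exp(-\alpha k^{1/d})<\infty$. The strategy for the first inequality is to realise $\widehat\tau(o,z)$ by the passage time along a shortest open path from ${\mathcal V}(o)$ to ${\mathcal V}(z)$, whose length is controlled by Lemma \ref{lem:bar-D}, and then to bound that passage time via a sum-of-exponentials Chernoff estimate.

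Set $n_0=\|z\|_\infty$ and $M=\lfloor{\rm C}_4\|z\|_1+n_0\rfloor$. I would decompose
\[
P\bigl(\widehat\tau(o,z)>Kn_0\bigr)\le P(\kappa(o)>n_0)+P(\kappa(z)>n_0)+P(\overline D(o,z)>M)+R,
\]
where $R$ is the probability of $\{\widehat\tau(o,z)>Kn_0\}$ intersected with $\{\kappa(o)\le n_0,\,\kappa(z)\le n_0,\,\overline D(o,z)\le M\}$. By Lemmas \ref{lem:k_x-exp_tail} and \ref{lem:bar-D} the first three terms are each bounded by $\exp(-c\,n_0^{1/d})$. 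For $R$, note that $\widehat\tau(o,z)>Kn_0$ forces \emph{every} open path from ${\mathcal V}(o)$ to ${\mathcal V}(z)$ to satisfy $\overline\tau(\Gamma)>Kn_0$; combined with the other restrictions there is in particular some open self-avoiding such $\Gamma$ of length $m\le M$ with endpoints in $B(o,n_0)\times B(z,n_0)$. A union bound gives
\[
R\le\sum_{x'\in B(o,n_0)}\sum_{y'\in B(z,n_0)}\sum_{m=1}^{M}\sum_{\Gamma:\,x'\to y',\,|\Gamma|=m}P\bigl(\Gamma\hbox{ open},\ \overline\tau(\Gamma)>Kn_0\bigr).
\]

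Since $\{\overline\tau(\Gamma)>Kn_0\}$ depends only on the $e(\cdot,\cdot)$'s along $\Gamma$, which are i.i.d.\ Exp$(\lambda)$ along a self-avoiding $\Gamma$, one simply drops the ``open'' event and uses the Chernoff bound $P(\sum_{i=1}^m Y_i>t)\le 2^m\exp(-\lambda t/2)$ for i.i.d.\ Exp$(\lambda)$ variables $Y_i$. Bounding the number of length-$m$ self-avoiding paths from a fixed vertex by $(2d)^m$ and $|B(o,n_0)|\le(2n_0+1)^d$, one obtains $R\le(2n_0+1)^{2d}\,M\,(4d)^M\exp(-\lambda Kn_0/2)$. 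Since $M\le({\rm C}_4 d+1)n_0$, choosing $K>2({\rm C}_4 d+1)\log(4d)/\lambda$ makes the resulting exponent linear in $n_0$ with \emph{negative} coefficient, and that dominates $-n_0^{1/d}$; this gives the first estimate. The second is obtained by the same argument with $K(\|z\|_\infty+n)$ in place of $Kn_0$ and $M=\lfloor{\rm C}_4\|z\|_1+n_0+n\rfloor$, so that the $\overline D$ tail and the Chernoff bound both supply the required $\exp(-\alpha n^{1/d})$. The only mildly delicate point is the stochastic comparison in the penultimate step: conditioning on $\Gamma$ being open actually tilts the $e(x_i,x_{i+1})$'s to be \emph{smaller}, but since we are only after an upper bound we may drop the conditioning and work with the unconditional Gamma tail, for which self-avoidance of a length-minimising path is precisely what is needed.
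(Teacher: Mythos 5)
Your proof is correct and follows essentially the same route as the paper's: truncate on the $\kappa$-tails, control the chemical distance between the two neighborhoods (you via Lemma \ref{lem:bar-D}, the paper via a union bound over points of the two boxes together with Lemma \ref{lem:ap2}), and conclude with a union bound over self-avoiding open paths plus a Chernoff bound for sums of i.i.d.\ exponentials --- which is exactly the step the paper compresses into the phrase ``large deviation results for exponential variables''. The only adjustment needed is that for the second estimate the $\kappa$-truncation level (and hence the radius of the boxes in the union bound) must also be taken at $\|z\|_\infty+n$ rather than at $\|z\|_\infty$, as the paper does, so that every error term decays like $\exp(-\alpha n^{1/d})$ even when $n\gg\|z\|_\infty$.
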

\begin{proof}{Theorem}{th:widehat_t-serie}
Let $K\ge 0, z\in\Z^d$ and let 
$\mathcal B=B(o,(\|z\|_\infty+n)/4)\times B(z,(\|z\|_\infty+n)/4)$. Then write:
\begin{eqnarray}\label{eq:calcul1-widehat_t-serie}
&&P({\widehat\tau}(o,z)>K(\|z\|_\infty+n))\cr
&\le & P(4\kappa(z)>\|z\|_\infty+n)+   P(4\kappa(o)>\|z\|_\infty+n)    
 + P(A)
\end{eqnarray}
where
\begin{eqnarray}\nonumber
A&=&\{{\widehat\tau}(o,z)>K(\|z\|_\infty +n), 4\kappa(z) 
\le \|z\|_\infty +n, 4\kappa(o) \le \|z\|_\infty +n\}\\\label{eq:calcul2-widehat_t-serie}
 &\subset& \displaystyle{\cup_{(x,y)\in \mathcal B}\{ x\to y, \tau(x,y)>K(\|z\|_\infty+n)\}}.
 \end{eqnarray}
Note that if $(x,y)\in \mathcal B$ we have 
\begin{eqnarray}\label{eq:x-y and z}
\|z\|_\infty-n&\le& 2\|x-y\|_\infty\le 3\|z\|_\infty+n\quad\mbox{ and}\cr
3(\|z\|_\infty+n)&=& 3\|z\|_\infty+n+2n\ge 2(\|x-y\|_\infty+n).
\end{eqnarray}
{}From \eqref{eq:calcul2-widehat_t-serie}, \eqref{eq:x-y and z}, 
for ${\rm C}_2$ given in Proposition \ref{lem:ap2}, we get:
\begin{eqnarray}\label{eq:calcul5-widehat_t-serie}
&&
P(A) \le
\sum_{x\in B(o,(\|z\|_\infty+n)/4)}\ \ \sum_{y\in B(z,(\|z\|_\infty+n)/4)} \cr
&&\Big(P( 3\tau(x,y)> 2K(\|x-y\|_\infty +n), D(x,y)<({\rm C}_2+1)(\|x-y\|_1+n))\cr
&&\quad+P(x\to y, D(x,y)\geq ({\rm C}_2+1)(\|x-y\|_1+n ))\Big).
\end{eqnarray}
It now follows from Proposition \ref{lem:ap2}\textit{(i)}  that 
we have, for some $\alpha_5>0$,
\begin{eqnarray}\label{eq:calcul6-widehat_t-serie}
 &&P(x\to y, D(x,y)\geq ({\rm C}_2+1)(\|x-y\|_1+n ))\cr
 &\le& \exp(-\alpha_5(\|x-y\|_1+n)^{1/d})\cr
 &\le& \exp(-\alpha_5(\|x-y\|_\infty+n)^{1/d}).
\end{eqnarray} 
Then, taking $K$ large enough, by large deviation results for exponential
variables,  we also have, for some $\alpha_6>0$,
\begin{eqnarray}\label{eq:calcul7-widehat_t-serie}
&& P( 3\tau(x,y)>2K(\|x-y\|_\infty +n), D(x,y)<({\rm C}_2+1)(\|x-y\|_1+n)) \cr
&\le& P( 3\tau(x,y)>2K(\|x-y\|_\infty +n), D(x,y)<({\rm C}_2+1)d(\|x-y\|_\infty+n))\cr
&\le& \exp(-\alpha_6(\|x-y\|_\infty+n)).
\end{eqnarray}
Hence, from  \eqref{eq:x-y and z}--\eqref{eq:calcul7-widehat_t-serie}, for some constants $R$
and $\alpha_7>0$ we have:
\begin{eqnarray*}\label{eq:calcul9-widehat_t-serie}
&&P(A) \le R (\|z\|_\infty+n)^{2d}\exp(-\alpha_7 (\|z\|_\infty+n)^{1/d}),
\end{eqnarray*} 
which gives, by modifying the constants, 
\begin{eqnarray}\label{eq:calcul10-widehat_t-serie}
&&P(A) \le R' \exp(-\alpha_8 (\|z\|_\infty+n)^{1/d}).
\end{eqnarray} 
The theorem's statements now follow from \eqref{eq:calcul9-widehat_t-serie}, 
\eqref{eq:calcul1-widehat_t-serie} and Lemma \ref{lem:k_x-exp_tail}.
\end{proof}
\subsection{Asymptotic shape for $\widehat \tau$}\label{subsec:shape-widehat_t}
 Next theorem is the last necessary step to prove the shape theorem. 
\begin{theorem}\label{th:At-encadre}
Let $\eps>0$, and
${\widehat A}_t=\{z\in\Z^d:{\widehat\tau}(o,z)\le t\}.$
Then, a.s. for $t$ large enough, for $D$ defined in \eqref{def:D},
\begin{equation}\label{eq:At-encadre}
(1-\eps)tD\cap\Z^d\subset{\widehat A}_t\subset(1+\eps)tD\cap\Z^d.
\end{equation}
\end{theorem}
In the sequel $K$ and $\alpha$ are fixed constants satisfying the conclusions of 
Theorem \ref{th:widehat_t-serie}, $\gamma^*$ is the Lipschitz constant 
of $\mu$ (see Lemma \ref{ajoute2}) and 
$N_x$ was defined in \eqref{eq:Nx} for any $x\in \Q^d\setminus \{o\}$.
 To prove Theorem \ref{th:At-encadre} we need the two following lemmas.  
\begin{lemma}\label{lem:covering B_0}
Let $\rho>0$ and let $\delta \le \rho/(2K)$. Then,  for all $x\in \Q^d\setminus \{o\}$, 
\begin{eqnarray}\label{eq:f1}
\sum_{k>0}P(\sup_{z\in B_{kN_xx}(\delta kN_x)\cap\Z^d}\widehat \tau(kN_xx,z)\ge kN_x\rho)&<&\infty,\\
\label{eq:f2}
\sum_{k>0}P(\sup_{z\in B_{kN_xx}(\delta kN_x)\cap\Z^d}\widehat \tau(z,kN_xx)\ge kN_x\rho)&<&\infty.
\end{eqnarray}
\end{lemma}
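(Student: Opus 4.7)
The plan is to combine translation invariance of the environment with the sub-exponential tail bound of Theorem \ref{th:widehat_t-serie}, choosing the free parameter there to be proportional to $k$ so that the resulting decay dominates the polynomial-sized union bound over the ball $B(kN_xx,\delta kN_x)\cap\Z^d$.

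Fix $x\in\Q^d\setminus\{o\}$. For each $z\in B(kN_xx,\delta kN_x)$ set $w=z-kN_xx$, so that $\|w\|_\infty\le\delta kN_x$. By stationarity of the underlying i.i.d.\ environment $(T_x,e(x,y))$, the random variable $\widehat\tau(kN_xx,z)$ has the same distribution as $\widehat\tau(o,w)$. The hypothesis $\delta\le\rho/(2K)$ gives $\|w\|_\infty\le\rho kN_x/(2K)$, so taking for instance $n_k=\lfloor\rho kN_x/(4K)\rfloor$ yields $K(\|w\|_\infty+n_k)<\rho kN_x$ uniformly in such $w$. Hence
\[
\{\widehat\tau(o,w)\ge\rho kN_x\}\subset\{\widehat\tau(o,w)>K(\|w\|_\infty+n_k)\},
\]
and the second inequality in Theorem \ref{th:widehat_t-serie} bounds the right-hand probability by $\exp(-\alpha n_k^{1/d})$.

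A union bound over the $(2\delta kN_x+1)^d$ lattice points in the ball gives, for constants $C_x,\alpha_x>0$ depending on $x,\rho,K,\delta,N_x$,
\[
P\Bigl(\sup_{z\in B(kN_xx,\delta kN_x)\cap\Z^d}\widehat\tau(kN_xx,z)\ge\rho kN_x\Bigr)\le C_x\,k^d\exp(-\alpha_x k^{1/d}).
\]
The series $\sum_{k>0}k^d\exp(-\alpha_x k^{1/d})$ is convergent (via the substitution $u=k^{1/d}$ it is comparable to $\int_1^\infty u^{d^2+d-1}\exp(-\alpha_x u)\,du$), which yields \eqref{eq:f1}. For \eqref{eq:f2} the same argument applies verbatim: $\widehat\tau(z,kN_xx)$ has the distribution of $\widehat\tau(o,kN_xx-z)$, and $\|kN_xx-z\|_\infty$ still lies in $[0,\delta kN_x]$.

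No step of this argument looks genuinely delicate; the only point to track is that the sub-exponential rate $\exp(-\alpha k^{1/d})$ of Theorem \ref{th:widehat_t-serie} still beats the polynomial volume $(kN_x)^d$, which holds because $k^{1/d}/\log k\to\infty$. The choice $\delta\le\rho/(2K)$ is precisely what guarantees that, after subtracting the unavoidable ``typical'' contribution $K\|w\|_\infty$ from the threshold $\rho kN_x$, a comparable amount of order $k$ survives to be fed into the tail bound.
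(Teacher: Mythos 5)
Your proof is correct and follows essentially the same route as the paper: reduce to the origin by translation invariance, use the hypothesis $\delta\le\rho/(2K)$ to absorb the term $K\|z\|_\infty$ into half of the threshold $\rho kN_x$, apply the second tail bound of Theorem \ref{th:widehat_t-serie} with $n$ of order $k$, and conclude by a union bound over the polynomially many lattice points, which the stretched-exponential decay dominates. Your handling of the constant $K$ in front of $n$ is in fact slightly more careful than the paper's.
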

\begin{proof}{Lemma}{lem:covering B_0}
We derive only  \eqref{eq:f1}, since the proof of \eqref{eq:f2} is analogous.
By translation invariance 
$$P(\sup_{z\in B_{kN_xx}(\delta kN_x)\cap \Z^d}\widehat \tau(kN_xx,z)\ge kN_x\rho)
=P(\sup_{z\in B(\delta kN_x)\cap \Z^d}\widehat \tau(o,z)\ge kN_x\rho).$$
Hence it suffices to show that
$$\sum_{k>0} P(\sup_{z\in B(\delta kN_x)\cap \Z^d}\widehat \tau(o,z)\ge kN_x\rho)<\infty.$$
 Let  $k>0,z\in B(\delta kN_x)\cap\Z^d$. By Theorem \ref{th:widehat_t-serie} 
 we have: 
\begin{eqnarray*}
P(\widehat \tau (o,z)\ge kN_x\rho)&\le& P(\widehat \tau (o,z)\ge K\|z\|_\infty +kN_x\rho/2)\cr
&\le& \exp\Big(-\alpha \Big\lfloor \frac{kN_x\rho}{2K}\Big\rfloor^{1/d}\Big).
\end{eqnarray*}
 Therefore, for some constant ${\rm C}$,
\begin{eqnarray*}
\sum_{k>0}P(\sup_{z\in B(\delta kN_x)\cap\Z^d}\widehat \tau(o,z)\ge kN_x\rho)
&\le& \sum_{k>0} {\rm C} (\delta kN_x)^d \exp\Big(-\alpha \Big\lfloor \frac{kN_x\rho}{2K}\Big\rfloor^{1/d}\Big)\cr
&<&\infty.
\end{eqnarray*}
\end{proof}
\mbox{}\\ \\
For $x\in\Q^d\setminus\{o\}$, $\delta>0$, we define
the cone  associated to $x$ of amplitude $\delta$ as 
\begin{equation}\label{eq:cone_de_x}
C_x(\delta)=\Z^d\cap \Big(\cup_{t\ge 0}B_{tx}(\delta t)\Big).
\end{equation} 
\begin{lemma}\label{lem:included-cone}
 Let $x\in \Q^d\setminus \{o\}$. Then for any $0<\delta' <\delta$ 
 the set $C_x(\delta')\setminus \cup_{k\ge 0} B_{kN_x x}(\delta k N_x)$ is finite.
\end{lemma}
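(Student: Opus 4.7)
The plan is to verify that the balls $B(kN_xx,\delta kN_x)$ for integer $k\ge 0$ cover all but a bounded region of the cone $C(x,\delta')$, the extra slack $\delta-\delta'$ in the radii being enough to fill in the gaps between consecutive centers $kN_xx$ and $(k+1)N_xx$ once $k$ is large.

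More concretely, pick any $z\in C(x,\delta')\cap \Z^d$; by definition there exists $t\ge 0$ such that $\|z-tx\|_\infty\le \delta' t$. From the triangle inequality one gets
\[
\|z\|_\infty\le(\|x\|_\infty+\delta')\,t,
\]
so $t$ is forced to be large when $\|z\|_\infty$ is large. Let $k$ be the nonnegative integer closest to $t/N_x$, so that $|t-kN_x|\le N_x/2$. Then
\[
\|z-kN_xx\|_\infty
\le \|z-tx\|_\infty+|t-kN_x|\,\|x\|_\infty
\le \delta' t+\tfrac{N_x}{2}\|x\|_\infty
\le \delta' kN_x+\tfrac{N_x}{2}\bigl(\delta'+\|x\|_\infty\bigr),
\]
using $t\le kN_x+N_x/2$ in the last step.

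To conclude $z\in B(kN_xx,\delta kN_x)$, it suffices that
\[
\tfrac{N_x}{2}\bigl(\delta'+\|x\|_\infty\bigr)\le (\delta-\delta')\,kN_x,
\]
i.e.\ $k\ge k_0:=\lceil(\delta'+\|x\|_\infty)/(2(\delta-\delta'))\rceil$. Setting $R=(k_0+1)N_x(\|x\|_\infty+\delta')$, any $z\in C(x,\delta')$ with $\|z\|_\infty>R$ forces $t>(k_0+1)N_x$, hence $k\ge k_0$, and therefore $z\in B(kN_xx,\delta kN_x)$ for this particular $k$. Consequently $C(x,\delta')\setminus\bigcup_{k\ge 0}B(kN_xx,\delta kN_x)$ is contained in the finite set $B(o,R)\cap \Z^d$, proving the lemma.

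There is no real obstacle here: the computation is purely geometric and the only ingredient is the triangle inequality together with the fact that $N_x$ is a fixed positive integer depending only on $x$, so the slack $\delta-\delta'$ eventually dominates the discretisation error $\tfrac{N_x}{2}\|x\|_\infty$. The only point to be a bit careful about is choosing the rounding of $t/N_x$ so that $k\ge 0$ (which is automatic once $t\ge N_x/2$, covered by taking $\|z\|_\infty$ large).
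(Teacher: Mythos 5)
Your proof is correct, and since the paper explicitly leaves this lemma to the reader as elementary, your argument (round $t$ to the nearest multiple of $N_x$ and absorb the discretisation error $\tfrac{N_x}{2}(\delta'+\|x\|_\infty)$ into the slack $(\delta-\delta')kN_x$ for $k$ large) is precisely the intended one. No gaps.
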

 \begin{proof}{Lemma}{lem:included-cone}
Let
 $$t_0=\frac{N_x\|x\|_1}{\delta-\delta'}.$$ 
 Since 
$\Z^d\cap \Big(\cup_{t\ge 0}^{t_0}B_{tx}(\delta' t)\Big)$ is finite, it suffices to show that
$$\cup_{t\ge t_0}^{\infty}B_{tx}(\delta' t)
\subset \cup_{k\ge 0} B_{kN_x x}(\delta k N_x).$$
To prove this, pick
$z\in B_{t_1x}(\delta' t_1)$ for some $t_1\ge t_0$. Let $k_0=\inf\{i\in \N: iN_x\ge t_1\}$.
Hence $0\le k_0N_x-t_1<N_x$, and
\begin{eqnarray*}
\|z-k_0N_xx\|_1&\le&\|z-t_1x\|_1+|t_1-k_0N_x|\|x\|_1\\
&<& \|z-t_1x\|_1+N_x\|x\|_1
\le \delta't_1 +N_x\|x\|_1\\
&=&\delta't_1+(\delta-\delta')t_0\leq \delta t_1\leq \delta k_0 N_x.
\end{eqnarray*}
Therefore $z\in  B_{k_0 N_x x}(\delta k_0 N_x)$
and the lemma is proved. 
\end{proof} 
\mbox{}\\ \\
 In the next proof we use that since the 
 Lipschitz constant of  $\mu$ is $\gamma^*$ 
 for the norm $\|.\|_1$
(by Proposition \ref{lem:definir_phi}), 
it is $\gamma=\gamma^*d$ for the norm $\|.\|_\infty$.\par
\medskip
\begin{proof}{Theorem}{th:At-encadre}
Fix $\eps\in (0,1)$ and let $\rho,\delta $ and $\iota$ be three small 
positive  parameters  such that $\delta \le \rho/(2K)$, whose values  
will be determined later. 
The set ${\mathcal Y}=\{x\in \Q^d:1-2\iota <\mu(x)<1-\iota\}$ is a ring between two
balls with the same center but with  a different radius, because 
by Proposition \ref{lem:definir_phi},
 $\mu$ 
is homogeneous and  positive except that $\mu(o)=0$. Hence the (compact) closure of ${\mathcal Y}$,
which is recovered
by balls of the same radius centered on the rational points of ${\mathcal Y}$,
is in fact covered by a finite number of such balls. 
Thus there exists a finite subset $Y$ of ${\mathcal Y}$
 such that
$\Z^d \subset \cup_{x\in Y}C_x(\delta/2)$ (if the balls recover the ring, 
the cones associated to them recover the whole space). Hence, to prove the first 
inclusion of \eqref{eq:At-encadre} it suffices to show that for any 
$x\in Y$ and any sequences $(t_n)_{n>0}$ and 
$(z_n)_{n>0}$ such that $t_n \uparrow \infty$ in
$\R^+$, 
 $z_n\in C_x(\delta/2)\cap\Z^d$  with $\|z_n\|_\infty\ge n$ and 
 $\mu(z_n)\le (1-\eps) t_n$, we have $\widehat \tau (o,z_n)\le t_n$ 
 a.s. for $n$ sufficiently large. So, let $(t_n)_{n>0}$ and 
$(z_n)_{n>0}$ be such sequences. Using Lemma \ref{lem:included-cone}
 (taking a subsequence if necessary) 
let $k_n\in \N $ be such that $z_n\in B_{k_nN_xx}( \delta k_nN_x)$, hence 
$k_n\ge {\rm C}n$ for some constant ${\rm C}$.  Since 
 $\mu$ is Lipschitz,  write  
$$k_n N_x (1-2\iota)\le \mu(k_nN_xx)\le \mu(z_n)
+\gamma \delta k_n N_x\le (1-\eps)t_n+\gamma \delta k_n N_x, $$
so that
\be\label{eq:this inequality}
k_nN_x\le \Big(\frac{1-\eps}{1-2\iota -\gamma \delta}\Big)t_n.
\ee
It now follows from \eqref{eq:this inequality} and the subadditivity property 
\eqref{eq:sous-additif} of $\widehat \tau$ that:
$$\frac{\widehat \tau(o,z_n)}{t_n}\le  
\Big(\frac{1-\eps}{1-2\iota 
-\gamma \delta}\Big)\Big(\frac{\widehat \tau(o,k_nN_xx)}{k_nN_x}
+\frac{u(k_nN_xx)}{k_nN_x}
+\frac{\widehat \tau(k_nN_xx,z_n)}{k_nN_x}\Big).$$
Therefore, by Theorem \ref{th:radial-limits}, 
Lemma \ref{lem:regularite-approx-tau} 
(the variables $u(.)$ are identically distributed, and 
$k_n\ge {\rm C}n$), Lemmas \ref{lem:definir_phi} and  
\ref{lem:covering B_0} we obtain:
$$\limsup_{n\to +\infty}  \frac{\widehat \tau(o,z_n)}{t_n}\le 
\Big(\frac{1-\eps}{1-2\iota -\gamma \delta}\Big)\Big(\mu(x)
+\rho \Big)\qquad \mbox{a.s.}$$
Since $x\in Y$ this implies: 
$$\limsup_{n\to +\infty}  \frac{\widehat \tau(o,z_n)}{t_n}
\le \Big(\frac{1-\eps}{1-2\iota -\gamma \delta}\Big)\Big(1-\iota+\rho \Big)\qquad \mbox{a.s.}$$ 
 Taking $\iota$, $\rho$ and $\delta$ small enough, the right hand 
side is strictly less than $1$ which proves that $\widehat \tau (o,z_n)\le t_n$ 
a.s. for $n$ sufficiently large.

Similarly, to prove the second inclusion of \eqref{eq:At-encadre} it suffices 
to show that for any $x\in Y$ and any sequences $t_n \uparrow \infty$ in
$\R^+$ and 
 $z_n$  in $C_x(\delta/2)\cap\Z^d$  such that $\mu(z_n)\ge (1+\eps) t_n$ we have 
 $\widehat \tau (o,z_n)> t_n$ a.s. for $n$ sufficiently large. As before,  taking subsequences if necessary, 
 we let 
$(t_n)_{n>0}$ and 
$(z_n)_{n>0}$ be such sequences, and $k_n\in \N $ be such that 
$z_n\in B_{k_nN_xx}( \delta k_nN_x)$. Proceeding then as for the first inclusion, we get:
$$k_nN_x(1-\iota)\ge \mu(k_nN_xx)\ge \mu(z_n)-\gamma \delta k_nN_x
\ge (1+\eps)t_n -\gamma \delta k_nN_x,$$
$$k_nN_x\ge \Big( \frac{1+\eps}{1-\iota +\gamma \delta}\Big)t_n,$$
$$\frac{\widehat \tau(o,z_n)}{t_n}\ge \Big(\frac{1+\eps}{1-\iota 
+\gamma \delta}\Big) \Big( \frac{\widehat \tau (o,k_nN_xx)}{k_nN_x}
-\frac{u(z_n)}{k_nN_x}-\frac{\widehat \tau (z_n,k_nN_xx)}{k_nN_x}\Big),$$
and 
\begin{eqnarray*}
\liminf_{n\to +\infty} \frac{\widehat \tau(o,z_n)}{t_n}
&\ge& \Big(\frac{1+\eps}{1-\iota +\gamma \delta}\Big) \Big(\mu(x) -\rho\Big)\qquad \mbox{a.s.}\cr
&\ge&\Big(\frac{1+\eps}{1-\iota +\gamma \delta}\Big) \Big(1-2\iota-\rho\Big)\qquad \mbox{a.s.}
\end{eqnarray*}
 Now, taking $\iota$, $\rho$ and $\delta$ small enough, 
the right hand side is strictly bigger than $1$ and the 
second inclusion of \eqref{eq:At-encadre} is proved.
\end{proof}
\subsection{  Asymptotic shape for the epidemic  }\label{subsec:shape-thm}
We can now prove our main result,  the shape theorem. 
\begin{proof}{Theorem}{th:shape}
  Let $\eps>0$ be given. \par
\noindent
\textit{(i)} We first show that  
the infection grows at least linearly as $t$ goes to
infinity, that is, 
\begin{equation}\label{eq:ne-stagne-pas1}
P\Big(\big(\Upsilon_t\cup \Xi_t\big) \supset \big((1-\eps)tD\cap C_o^{\rm out} \big)\mbox{  for all $t$ large enough} \Big)=1.
\end{equation}
 Since $R_o^{\rm out}$ is finite a.s. this will follow from:
\begin{equation}\label{eq:ne-stagne-pas2}
P\Big(\big(\Upsilon_t\cup \Xi_t \big)\supset \big((1-\eps)tD\cap (C_o^{\rm out} \setminus R_o^{\rm out})\big)\mbox{  for all $t$ large enough} \Big)=1.
\end{equation}
By Theorem \ref{th:At-encadre}, if $0<a<b$ then $atD\cap \Z^d \subset \widehat A_{bt}$ a.s. for $t$ large enough. Hence, for  $t$ large enough
 $z\in (1-\eps)tD\cap (C_o^{\rm out} \setminus R_o^{\rm out})$ implies  
\be\label{eq:tau_z-borne}
 {\widehat\tau}(o,z)\le (1-\eps/2)t\mbox{ a.s.}
\ee
 and by Lemma \ref{lem:comparaison-approximation},
${\tau}(o,z)\le(1-\eps/2)t+u(o)+u(z)$. 
Since  $u(o)< +\infty$ a.s. we have $u(o)<(\eps/4)t$ a.s. for $t$ large enough. Hence,
 by \eqref{eq:xi-zeta-tau}, 
\eqref{eq:ne-stagne-pas2} will follow if we show that 
 $\sup_{z \in tD}u(z)\leq (\eps/4) t\,$ a.s.  for $t$ large enough, 
which is implied by
$\sup_{z \in (n+1)D}u(z)\leq (\eps/4)n\,$ a.s. for $n=\lfloor t \rfloor$. 
By Proposition \ref{lem:definir_phi}, $D$ is bounded, hence the number of points in  $(n+1)D$ with
coordinates in $\Z$ is less than ${\rm C_5}(n+1)^d$ for some constant ${\rm C_5}$. 
Then  write
\begin{eqnarray*}
P\left(\sup_{z \in (n+1)D}u(z)\geq \frac{\eps n}4\right)&\leq&
{\rm C_5}(n+1)^d P\left(u(o)\geq \frac{\eps n}4\right)\cr&\leq&
{\rm C_5}(n+1)^d \frac{4^{d+2}}{(\eps n)^{d+2}}E(u(o)^{d+2}).
\end{eqnarray*}
Thus, by Lemma \ref{lem:regularite-approx-tau},
$\sum_{n\in\N} P(\sup_{z \in (n+1)D}u(z)\geq \eps n/4)<\infty$, 
and \eqref{eq:ne-stagne-pas2} follows from 
Borel-Cantelli's Lemma.\\ \\
\textit{(ii)} Next we show that 
\begin{equation}\label{eq:ne-deborde-pas}
P\Big(\big(\Upsilon_t\cup \Xi_t\big) \subset\big( (1+\eps)tD\cap C_o^{\rm out}\big) \mbox{  for all $t$ large enough} \Big)=1.
\end{equation}
If $z$ belongs to $\Xi_t$ or $\Upsilon_t$, then 
by  \eqref{eq:xi-zeta-tau} and  Lemma \ref{lem:comparaison-approximation}, 
${\widehat\tau}(o,z)\le t$ for $z\in C_o^{\rm out}\setminus R_o^{\rm out}$, which implies 
$z\in(1+\eps)tD$ for $t$ large enough by Theorem \ref{th:At-encadre}. Since $R_o^{\rm out}$ is finite \eqref{eq:ne-deborde-pas} follows.\par
 Putting together \eqref{eq:ne-stagne-pas1} and \eqref{eq:ne-deborde-pas} yields \eqref{eq:shape}.\\ \\
\textit{(iii)} Finally, assuming $E(\vert T_z\vert^d)<\infty$, we show that
\begin{equation}\label{eq:brule-rapidement}
P(\Upsilon_t\cap(1-\eps)tD=\emptyset \mbox{  for $t$ large enough})=1.
\end{equation}
Let $z\in(1-\eps)tD\cap C_o^{\rm out}$, then, by 
\eqref{eq:xi-zeta-tau}, \eqref{eq:ne-stagne-pas1} and the same reasoning as for \eqref{eq:tau_z-borne}, 
we have $\tau(o,z)\le (1-\eps/2)t$  if $t$ is large enough.
Hence, \eqref{eq:brule-rapidement} will follow if we show that 
$T_z\ge (\eps/2)\tau(o,z)$ occurs only for a finite number of $z$'s.
Indeed otherwise $T_z\le (\eps/2)(1-\eps/2)t$ so that $\tau(o,z)+T_z<t$ if $t$ is large enough: it means that
the infection has reached
site $z$ and the time of infection from $z$ is over
before time $t$, hence $z$ has recovered by time $t$, 
that is $z\in\Xi_t,z\notin\Upsilon_t$. \par
   But 
for $\delta=(2(1+\eps)\sup_{x\in D}\|x\|_\infty)^{-1}$ (by Proposition \ref{lem:definir_phi}, $D$ is bounded),
 we have $\tau(o,z) \ge \delta \|z\|_\infty$ except for a finite number of $z$'s.
 Because if $z$ satisfies $\tau(o,z) < \delta \|z\|_\infty$, then by 
\eqref{eq:xi-zeta-tau} and \eqref{eq:ne-deborde-pas}, for $\delta \|z\|_\infty$ larger than some $t_0$,
we have
$z\in\big(\Upsilon_{\delta \|z\|_\infty}\cup \Xi_{\delta \|z\|_\infty}\big)
\subset (1+\eps)\delta \|z\|_\infty D$,
hence the contradiction $\|z\|_\infty\le \|z\|_\infty/2$. \par
Therefore, it suffices to show that for any $\delta'>0$ the event 
$\{T_z\ge \delta'\|z\|_\infty\}$ can only occur for a finite number of $z$'s. 
This will follow from Borel-Cantelli's Lemma
once we prove that $\sum_{z\in\Z^d} P(T_z\ge \delta' \|z\|_\infty)<\infty$. 
To do so we write, since the $T_z$'s are identically distributed:
$$\sum_{z\in\Z^d} P(T_z\ge \delta' \|z\|_\infty)
=\sum_{n\in\N}\sum_{z:\|z\|_\infty=n}P(T_z\ge \delta' n)
\le c\sum_{n\in\N}  n^{d-1}P(T_o\ge \delta' n)$$
for some constant $c$, and this last series converges because  
$T_o$ has a finite moment of order $d$. 
 Putting together \eqref{eq:shape} and \eqref{eq:brule-rapidement} yields \eqref{eq:couronne}.  
\end{proof}
\begin{appendix}\section{}\label{sec:appendix}
In this appendix we prove  Theorem \ref{prop:clusters_rentrant-sortant_infinis}, 
Lemma \ref{lem:connections} and \eqref{eq:analogue_G-thm8.21} in the proof
of Lemma \ref{lem:exp_decay_R}. 
 These proofs rely on dynamic renormalisation techniques introduced in \cite{MR1124831}. 
In applying these techniques we follow \cite[Chapter 7]{MR1707339} and \cite{MR1068308}, 
but we introduce some modifications. 
 In particular by considering some larger boxes we avoid using the sprinkling technique 
  more than once on any given bond. Because of this we only need to consider two different
   values of the infection parameter   and we do not need to introduce 
   the updating functions of \cite{MR1707339}.
   To simplify the notation we
write the proofs for $d=3$, but their generalizations to higher dimensions presents no problems.\\ \\
We introduce parameters whose values will be settled in Lemma \ref{lsuppl} below.
We fix $\lambda'>\lambda_c$ and 
adopt the terminology of \cite[Chapter 7]{MR1707339}. 
Nonetheless, we might change names of constants if this creates confusions
with the rest of our paper.
In the sequel $n,m$ and $N$ are positive integers such that 
\begin{equation}\label{mnN}
2m<n\qquad\hbox{  and  }\qquad N=n+m+1.
\end{equation}
 We consider our percolation model on  the slab $\Z^2 \times [-3N,3N]$. 
 Recall that we denote by $({\rm e}_1,{\rm e}_2,{\rm e}_3)$ the canonical basis of $\Z^3$.
For  $x=(x_1,x_2,x_3)\in \Z^3$ and $k\in \N$   such that 
$-3N+k\leq x_3\leq 3N-k$  we recall that $B(k)=[-k,k]^3$ and $B_x(k)=x+[-k,k]^3$.  
We divide the face $F(n)=\{x: x\in\partial B(n), x_1=n\}$ of $\partial B(n)$
in 4 quadrants:
\begin{eqnarray*}
T^{+,+}(n)&=&\{x: x\in\partial B(n), x_1=n, x_2\geq 0, x_3\geq 0\},\\
T^{+,-}(n)&=&\{x: x\in\partial B(n), x_1=n, x_2\geq 0, x_3\leq 0\},\\
T^{-,+}(n)&=&\{x: x\in\partial B(n), x_1=n, x_2\leq 0, x_3\geq 0\},\\
T^{-,-}(n)&=&\{x: x\in\partial B(n), x_1=n, x_2\leq 0, x_3\leq 0\}.
\end{eqnarray*}
and, for any choice of $(i,j)\in \{+,-\}^2$, we define a box 
 of thickness $2m+1$ composed of translates
of the corresponding quadrant, by 
\begin{equation}\label{eq:def-Tijmn}
 T^{i,j}(m,n)=\cup_{\ell=1}^{2m+1}\{\ell {\rm e}_1+T^{i,j}(n)\}.
 \end{equation}
For any of the above sets a subindex as $y$ means we translate it by $y$. 
\begin{definition} A {\rm seed} is a translate of $B(m)$ such that 
all its edges are $\lambda'$-open.
\end{definition}
We will be looking for oriented open paths starting 
in a seed inside $B(2N)$, and: either \textit{(a)}  contained in the union of boxes 
$B(3N)\cup B_{6N{\rm e}_1}(3N)$ and reaching a seed 
inside $B_{6N{\rm e}_1}(2N)\cap B_{8N{\rm e}_1}(2N)$;
 or \textit{(b)} contained in the union of boxes $B(3N)\cup B_{6N{\rm e}_2}(3N)$  
and reaching a seed inside $B_{6N{\rm e}_2}(2N)\cap B_{8N{\rm e}_2}(2N)$.
We will construct those paths in Lemma \ref{l3} below.  \\ \\
An important tool in  \cite[Chapter 7]{MR1707339}
is the \textit{sprinkling technique}, which enables some bonds, that would be closed 
otherwise, to be independently
open with a probability larger than some $\eps'>0$. We therefore need to find 
a way to proceed similarly, in spite of the fact that we work with
dependent percolation.   
For this,   it is convenient to define the processes for 
different values of  the rate of propagation $\widetilde\lambda$ 
on our common probability space and compare them. 
 Let  $(e_1(x,y),x,y\in\Z^3)$  be a collection of independent 
 exponential r.v.'s %as before, but now 
with parameter  $1$. Then let $e_{\widetilde\lambda}(x,y)={\widetilde\lambda}^{-1}e_1(x,y)$,
and  
\begin{equation}\label{def:sprinkling_open-closed-bonds}
X_{\widetilde\lambda}(x,y)=
\begin{cases}
1 &\text{if $e_{\widetilde\lambda}(x,y)<T_x$;}\\
0 &\text{otherwise.}
\end{cases}
\end{equation}
 We recall that the event  $\{T_x>e_{\widetilde\lambda}(x,y) \}$ occurs if and only if 
the oriented bond $(x,y)$ is $\widetilde\lambda$-open.  \\ \\ 
 The following lemma implies  that given  $\lambda>\delta_1>0$, 
 there exists $\iota>0$ such that for any
$\widetilde\lambda$ such that $\widetilde\lambda+\delta_1<\lambda$ the random field 
$\{X_{\widetilde\lambda +\delta_1}(u,v):u,v \in \Z^3\}$ is 
stochastically above the random field
$\{\max \{X_{\widetilde\lambda }(u,v),Y(u,v)\}:u,v \in \Z^3\}$ 
where the random variables $Y(u,v)$ are i.i.d. Bernoulli 
with parameter  $\iota$  and are independent of the random 
variables $X_{\widetilde\lambda}(u,v)$.
This lemma justifies the use of the sprinkling technique
 needed to prove  Lemmas \ref{l2} and \ref{l3} below. 
\begin{lemma}\label{lem:sprinkling} 
 Assume  $\lambda>\delta_1>0$. 
There exists  $\iota>0$  such that for any $\widetilde\lambda>0$ such that
$\widetilde\lambda+\delta_1<\lambda$, and any $x,y\in \Z^3,$ with $y\sim x$, 
\begin{eqnarray}\nonumber
P(X_{\widetilde\lambda+\delta_1 } (x,y)=1&\vert&   X_{\widetilde\lambda+\delta_1}(u,v),\,
u,v\in \Z^3,\, u\sim v, (u,v)\not=(x,y);\\\nonumber
&&X_{\widetilde\lambda}(u,v),\, u,v\in \Z^3,\, u\sim v)>\iota\ \ \ a.s.    
\end{eqnarray}
\end{lemma} 
\begin{proof}{Lemma}{lem:sprinkling}
Recall that $X_{\widetilde\lambda+\delta_1}(x,y)$ is independent of the random variables 
$X_{\widetilde\lambda+\delta_1}(u,v),X_{\widetilde\lambda}(u,v),u\neq x,v\sim u$, 
hence it suffices to show that
\begin{eqnarray}\label{(3)bis}
P(X_{\widetilde\lambda+\delta_1}(x,y)=1&\vert& X_{\widetilde\lambda+\delta_1}(x,z),\ z\sim x, 
 z\neq y;\nonumber\\ && X_{\widetilde\lambda}(x,z),z\sim x)>
    \iota \ \ \ a.s. 
    \end{eqnarray}
  Since we are now conditioning on a finite number of random variables taking only values $0$ and $1$, 
  \eqref{(3)bis} will follow from:
\begin{eqnarray}
P(X_{\widetilde\lambda+\delta_1}(x,y)=1&\vert& X_{\widetilde\lambda+\delta_1}(x,z)=a_z, 
z\sim x,  z\neq y;\nonumber\\ &&
 X_{\widetilde\lambda}(x,z)=b_z,z\sim x)> \iota\label{ex-lem3.6}
\end{eqnarray}
  for all choices $a_z$ and $b_z$ in $\{0,1\}$ with $b_z\leq a_z$.\\ \\
To prove \eqref{ex-lem3.6}, we denote by ${\mathcal N}_{x}$
the union of all partitions of the set 
$\{z\in \Z^3:\,z\sim x\}$ of neighbors of $x$ into three disjoint sets called
${\mathcal N}_{x}^1,{\mathcal N}_{x}^0,{\mathcal N}_{x}^{0,1}$ such that
$y\in{\mathcal N}_{x}^1\cup{\mathcal N}_{x}^{0,1}$. Using the inequality $P(A\vert B)\geq P(A\cap B)$ 
for two events $A,B$, and taking an arbitrary partition in ${\mathcal N}_{x}$ gives
  \begin{eqnarray*}
&&P(X_{\widetilde\lambda+\delta_1}(x,y)=1\,\vert\, X_{\widetilde\lambda+\delta_1}(x,u)=0,\, 
\forall u\in{\mathcal N}_{x}^0,\\
&&\quad X_{\widetilde\lambda}(x,v)=1,\, 
\forall v\in{\mathcal N}_{x}^1 ;  X_{\widetilde\lambda}(x,w)=0, X_{\widetilde\lambda+\delta_1}(x,w)=1,\,
\forall w\in{\mathcal N}_{x}^{0,1})\\
&&\ge
P(X_{\widetilde\lambda+\delta_1}(x,y)=1,\, X_{\widetilde\lambda+\delta_1}(x,u)=0,\, 
\forall u\in{\mathcal N}_{x}^0,\\
&&\quad X_{\widetilde\lambda}(x,v)=1,\, 
\forall v\in{\mathcal N}_{x}^1 ;  X_{\widetilde\lambda}(x,w)=0, X_{\widetilde\lambda+\delta_1}(x,w)=1,\,
\forall w\in{\mathcal N}_{x}^{0,1}).
 \end{eqnarray*}
Let $a>0$ be such that $P(T_{x} \in [a, a +\gamma ])>0$ 
for all $\gamma >0$,  and  let $\delta_2\in (0,a)$  be such that  $b$ defined by 
$$b:=\frac{(a+\delta_2)\widetilde\lambda}{\widetilde\lambda+\delta_1}$$ satisfies $b< a$. 
On the event
\begin{eqnarray}\nonumber
&&\{T_{x} \in [a,a+\delta_2/2),  e_{\widetilde\lambda} ({x},w) \in [a+\delta_2/2, a+\delta_2),\, 
\forall w\in{\mathcal N}_{x}^{0,1},\\\nonumber
&&\,e_{\widetilde\lambda} ({x},v) \in [a-\delta_2/2, a),\, 
\forall v\in{\mathcal N}_{x}^1,\\\label{eq:onthisevent}
&&e_{\widetilde\lambda+\delta_1} ({x},u) \in [a+\delta_2/2, a+\delta_2),\, 
\forall u\in{\mathcal N}_{x}^0\},
\end{eqnarray} 
for all sites $v$ such that $v\in{\mathcal N}_{x}^1$ we have  $e_{\widetilde\lambda}(x,v)<T_{x}$
hence $X_{\lambda'}(x,v)=1$; for all sites $u$ such that $u\in{\mathcal N}_{x}^0$ 
we have  $T_{x}<e_{\lambda'+\delta_1}(x,u)$
hence $X_{\widetilde\lambda+\delta_1}(x,u)=0$; and
for all sites $w$ such that $w\in{\mathcal N}_{x}^{0,1}$ we have  $T_{x}< e_{\widetilde\lambda}(x,w)$ and
\[e_{\widetilde\lambda+\delta}(x,w)= 
\frac{\widetilde\lambda}{\widetilde\lambda+\delta_1} e_{\widetilde\lambda}(x,w) 
 \leq \frac{\widetilde\lambda}{\widetilde\lambda+\delta_1}(a+\delta_2)=b<a \leq T_{x}\]
hence $X_{\widetilde\lambda}(u,w)=0$ and $X_{\widetilde\lambda+\delta_1}(u,w)= 1$. \par
Since the probability $p({\mathcal N}_{x}^0,{\mathcal N}_{x}^1,{\mathcal N}_{x}^{0,1})$ 
of the event \eqref{eq:onthisevent} is strictly positive,
 we conclude the proof of \eqref{ex-lem3.6} by taking
$$\iota=\inf_{{\mathcal N}_{x}}p({\mathcal N}_{x}^0,{\mathcal N}_{x}^1,{\mathcal N}_{x}^{0,1}).$$
\end{proof}
\mbox{}\\ \\
It is in view of this sprinkling procedure that we chose a propagation rate
$\lambda'>\lambda_c$. 
As in \cite[Section 7.2]{MR1707339}, we go on with two key geometrical lemmas. 
The first one, Lemma \ref{l1}, corresponds to \cite[Lemma 7.9]{MR1707339}, 
with a very similar proof that we omit consequently. The second one,
Lemma \ref{l2}, corresponds to \cite[Lemma 7.17]{MR1707339}, that it
generalizes in view of its applications  for Theorem 
\ref{prop:clusters_rentrant-sortant_infinis} and Lemma \ref{lem:connections}.  
\begin{lemma}\label{l1} 
If $\lambda_c<\lambda'$ and $\eta>0$, then there exist integers 
$m=m(\lambda',\eta)$ and $n=n(\lambda',\eta)$
 satisfying \eqref{mnN} and such that 
\begin{eqnarray*}
&&P\big(\mbox{there exists a }\lambda' \mbox{-open path  in }B(n)\cup T^{i,j}(m,n)\mbox{ from }B(m)\\
&&\mbox{to a seed  contained in } T^{i,j}(m,n)\big)>1-\eta,
\end{eqnarray*}
for any choice of $(i,j)\in\{+,-\}^2$.
\end{lemma}
 \begin{notation}\label{sigma-fields} 
Given a subset  $V$ of $\Z^3$,  $\delta>0$ and $x\in V$,  
we let $\sigma(x,V,\lambda',\delta)$ be the $\sigma$-algebra 
generated by the indicator functions of
the following collection of events:
\begin{equation}\label{eq:coll-evt}
\{T_y>e_{\lambda'}(y,z): y\in V, z\sim y \}\cup 
\{T_y>e_{\lambda'+\delta}(y,z):y\in V\cap B_x(n)^c,z\sim y  \}.
\end{equation}
Note that when $V\cap B_x(n)^c=\emptyset$, $\sigma(x,V,\lambda',\delta)$ 
is simply the $\sigma$-algebra generated by the indicator functions of 
$\{T_y>e_{\lambda'}(y,z): y\in V, z\sim y \}$,
which we will denote by  $\sigma(V,\lambda')$.  %$\sigma(x,V,\lambda')$. 
\end{notation} 
 For $A$ a subset of $\Z^3$, recall from  \eqref{exteriorvertexboundary} that 
$\Delta_v A$ denotes the exterior vertex boundary of $A$.  
\begin{lemma}\label{l2}
 If $\lambda_c< \lambda'$ and $\epsilon,\delta >0$, there exists $m=m(\lambda',\epsilon,\delta)$
and $n=n(\lambda',\epsilon,\delta)$  satisfying \eqref{mnN} 
 and with the following property: 
\newline For any  choice 
of $(i,j)\in\{+,-\}^2$, any  $x\in \Z^3$,  any set $L\subset\Z^3$ 
such that 
\begin{equation}\label{eq:Lsuchthat}
B_x(m)\subset L\subset\Z^3\setminus T^{i,j}_x(m,n) 
\end{equation}
and for any $\sigma(x,L,\lambda',\delta)$-measurable  
  event $H$ of strictly positive probability we have:
\begin{equation}\label{eq:cl-l2}
P(G^{i,j}\vert H)\geq 1-\epsilon ,
\end{equation}
where
\begin{eqnarray*}
G^{i,j}&=&\big\{\mbox{there exists a path contained in } B_x(n)\cup T^{i,j}_x(m,n) \mbox{ going from  }L\\
&& \mbox{ to a seed contained in  }  T^{i,j}_x(m,n)
\mbox{ and  such that}\\ 
&&\mbox{its first edge }  (u,v) \mbox{ with }u\in L,v\in\Delta_v L  
\mbox{ is }(\lambda'+\delta) \mbox{-open} \\
&&
\mbox{and all}\mbox{ its other edges are } \lambda'\mbox{-open}\big\}.
\end{eqnarray*}
\end{lemma}
Note that since a $\lambda'$-open edge is also $(\lambda'+\delta)$-open, all the involved edges
 in the path  in  $G^{i,j}$ are $(\lambda'+\delta)$-open,  but 
 we do not know if the first edge of this path is
$\lambda'$-open. 
\begin{figure}[htp]\label{fig:dessin_lemme1.5}
\centering
\input{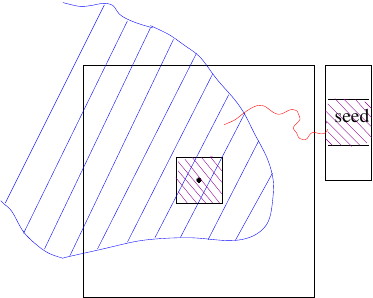_t}
\caption{In the open path, the first edge going out of $L$ is $(\lambda'+\delta)$-open,
and all other edges are $\lambda'$-open.}
\end{figure} 
\begin{proof}{Lemma}{l2} 
 Given $A$ a subset of $\Z^3$ and a subset $C$ of $\Delta_v A$, let 
\begin{equation}\label{def:AbuildrelC}
\{A 
\buildrel{\lambda'+\delta}\over{\Rightarrow}C\}
\end{equation}
 be the event that at least one of the bonds going from $A$ to $C$ is $(\lambda'+\delta)$-open.
Note that this is a stronger condition than to have a $(\lambda'+\delta)$-open path 
from $A$ to $C$. \par
Let $\alpha$ be the probability that any given bond is $\lambda'$-open.\par
Since the model is invariant under translations and 90 degree rotations, 
it suffices to show the lemma when $x$ is the origin and  $(i,j)=(+,+)$.  
We will  hence  drop $x,i$ and $j$ from the notation.
Let 
\begin{eqnarray}
&&V(L)=\big\{ z\in \Delta_v(L\cap B(n)): 
\mbox{ there exists a }\lambda'\mbox{-open path contained in }\nonumber\\
&&B(n)\cup T(m,n)\setminus L\mbox{ going from }z \mbox{ to a seed contained in }T(m,n)\big\}. 
\label{def:V(L)}
 \end{eqnarray}
Here it is understood that there always is a $\lambda'$-open path from a point to itself. 
Therefore any $ z\in \Delta_v(L\cap B(n)) $ contained in a seed in $T(m,n)$ is in $V(L)$. Note also 
that since the path is contained in  $B(n)\cup T(m,n)$,  $V(L)$ is a subset of $B(n)\cup T(m,n)$.
Now write 
\begin{equation}\label{def-G}
G=\cup_{K}\big(\{L %\overset
\buildrel{\lambda'+\delta}\over{\Rightarrow}K\}\cap\{V(L)=K\}\big)
\end{equation}
where the union is over all possible values of $V(L)$.\par
Our next step is to show that if $m$ and $n$ are properly chosen, the set $V(L)$ is 
large with probability close to $1$.
Let $k$ be a positive integer.  Note that  if $V(L)$ has at most $k$ elements, 
by the FKG inequality (see Remark \ref{rk:FKG}), the probability that all the bonds entering 
$V(L)$ are $\lambda'$-closed is at least $(1-\alpha)^{6k}$  (recall that $\alpha$ is 
the probability that any given bond is $\lambda'$-open, and that we are working in 
$\Z^3$). All the $\lambda'$-open paths  going from $L$ to a seed contained in  
$T(m,n)$ have to pass through a point in $V(L)$. Hence if all the bonds entering 
$V(L)$ are $\lambda'$-closed, such a path does not exist. Thus we have
\begin{eqnarray}
&& P\big(\mbox{there exists a }\lambda'\mbox{-open path contained in }B(n)\cup T(m,n)\setminus L
\nonumber\\
&&\quad\mbox{ going from } L \mbox{ to a seed contained in }T(m,n)\,\vert\, 
\,\vert V(L)\vert\leq k\big)\nonumber\\
&&\leq 1-(1-\alpha)^{6k}.\label{(1)}
 \end{eqnarray}
But according to Lemma \ref{l1} there exist $m$ and $n$ such that $2m<n$ and
\begin{eqnarray*}
&& P\big(\mbox{there exists a }\lambda'\mbox{-open path contained in } B(n)\cup T(m,n) \\ %\setminus L\\
&&\quad\mbox{ going from }B(m) \mbox{ to a seed contained in }T(m,n)\big) 
 \end{eqnarray*}
is as close to   $1$ as we wish.
 This implies that 
\begin{eqnarray}
&& P\big(\mbox{there exists a }\lambda'\mbox{-open path contained (except its initial point) in}\cr
&&\quad B(n)\cup T(m,n)\setminus L\mbox{ going from }L\cr&&\quad
\mbox{ to a seed contained in }T(m,n)\big) 
\label{csq-lem-1.1}
 \end{eqnarray}
 is as close to $1$ as we wish  uniformly in  $L$'s such that  
\begin{equation}\label{such-sets-L}
B(m)\subset L \subset \Z^3\setminus T(m,n).
\end{equation}
 The probability \eqref{csq-lem-1.1} is equal to 
\begin{eqnarray}
&&
P\big(\mbox{there exists a }\lambda'\mbox{-open path contained (except its initial point) in }\nonumber\\
&&\quad B(n)\cup T(m,n)\setminus L \mbox{ going from }L\mbox{ to a seed contained in }\nonumber\\
&&\quad T(m,n)\,\vert\, \vert V(L)\vert \leq k) P(\vert V(L)\vert \leq k\big)\nonumber\\
&&+
P\big(\mbox{there exists a }\lambda'\mbox{-open path contained (except its initial point) in }\nonumber\\
&&\quad B(n)\cup T(m,n)\setminus L \mbox{ going from }L\mbox{ to a seed contained in }\nonumber\\
&&\quad  T(m,n)\, \vert\, \vert V(L)\vert > k\big)
P( \vert V(L)\vert > k)\nonumber\\
&&\leq
\big(1-(1-\alpha)^{6k}\big) P( \vert V(L)\vert \leq k)+P(\vert V(L)\vert > k)\nonumber\\
&&=1-(1-\alpha)^{6k} P( \vert V(L)\vert \leq k),\label{detail(2)}
 \end{eqnarray}
where the inequality comes from \eqref{(1)}. For this upper bound to be close to 1, 
$P( \vert V(L)\vert \leq k)$ has to be small.  
 Hence, it follows from \eqref{(1)}--\eqref{detail(2)} that 
 for any $\epsilon_0>0$ and any $k\in \N$,
  we can choose $m$ and $n$ with $2m<n$ in such a way that 
\begin{equation}\label{(2)}
P(\vert V(L)\vert \leq k)\leq \epsilon_0
\end{equation}
  for all sets $L$  satisfying \eqref{such-sets-L}. \par 
  Let $K$ be a subset of $ \Delta_v(L\cap B(n)) $. 
  We will now provide a lower bound to $P(L
  \buildrel{\lambda'+\delta}\over{ \Rightarrow}K\, \vert\, H)$ 
  which depends on the cardinality of $K$ but is independent of $H$. 
  Suppose $K$ has at least $6r$ elements.
   Each point $u\in K$ has a neighbor $v\in  L\cap B(n) $, that we associate to $u$.
  But since each point of $\Z^3$ has 6 nearest neighbors,
  $v$ could be a neighbor of up to 6 points of $K$, to which it could have been associated. 
 Then, there exist distinct $x_1,\dots,x_r \in  L\cap B(n) $ and distinct 
 $y_1,\dots,y_r \in  K $ 
 such that $x_i\sim y_i$ for $i=1,\dots,r$. \par
 Since $x_i\in L\cap B(n)$ and $H$ is $\sigma(o,L,\lambda',\delta)$-measurable,  
by Lemma \ref{lem:sprinkling}, for some $\iota >0$ we have
\begin{equation}\label{(3)}
P(X_{\lambda'+\delta}(x_i,y_i)=0\,\vert\, X_{\lambda'+\delta}(x_j,y_j)=0,j=1,\dots i-1;H)<1-\iota 
\end{equation}
for $i=1,\dots,r$. 
It now follows from \eqref{(3)} and an inductive argument that
\[ 
P(X_{\lambda'+\delta}(x_i,y_i)=0,\, i=1,\dots,r\,\vert\, H)<(1-\iota)^r.
\] 
  Hence for all $K\subset \Delta_v(L\cap B(n))$ such that $\vert K\vert \geq 6r$ we have: 
\begin{equation}\label{(4)}
P(L
\buildrel{\lambda'+\delta}\over{ \Rightarrow}K \,\vert\, H)\geq 1-(1-\iota)^r.
 \end{equation}
 Now write:
\begin{eqnarray*}
P(G, V(L)=K\,\vert\, H)&=&P(L
\buildrel{\lambda'+\delta}\over{ \Rightarrow}K,V(L)=K\,\vert\, H)\\
&=&P(L
\buildrel{\lambda'+\delta}\over{ \Rightarrow}K\,\vert\, V(L)=K, H)P(V(L)=K\,\vert\, H). 
 \end{eqnarray*}
 But the event $\{V(L)=K\}$ is measurable with respect to the $\sigma$-algebra 
 generated by the random variables  $(T_x,e_{\lambda'}(x,y):x\notin L)$  while both 
 $\{L%\overset
 \buildrel{\lambda'+\delta}\over{ \Rightarrow}K \}$ and $H$ are measurable with 
 respect to the $\sigma$-algebra generated by the random variables
  $(T_x,e_{\lambda'}(x,y),e_{\lambda'+\delta}(x,y):x\in L)$.  
 Therefore  $\{V(L)=K\}$ is independent of the pair of events 
 $H,\{L%\overset
 \buildrel{\lambda'+\delta}\over{ \Rightarrow}K \}$, so that
\[
P(G, V(L)=K\,\vert\, H)=P(L%\overset
\buildrel{\lambda'+\delta}\over{ \Rightarrow}K\,\vert\,  H)P(V(L)=K)
\]
   Then summing up over all sets $K$ such that $\vert K\vert\geq 6r$, 
   it follows from  \eqref{def-G} and \eqref{(4)} that
\[
P(G\vert H)\geq (1-(1-\iota)^r) P(\vert V(L)\vert\geq 6r).
\]
  To complete the proof of Lemma \ref{l2} first pick $r$ such that
  $(1-\iota)^r<{\epsilon}/{2}$
and then use \eqref{(2)} to pick $m$ and $n$ such that 
$P(\vert V(L)\vert\geq 6r)\geq 1-{\epsilon}/{2}$.
\end{proof}
\mbox{}\\ 

\begin{notation}\label{def:barTx}
 For a given 
$x\in\Z^3$  and $i=1,2,3$, $H_x^i$ will denote the hyperplane perpendicular to 
 ${\rm e}_i$  passing through $x$.
Before stating the next lemma where $x\in B(2N-m)$, %for $x=(x_1,x_2,x_3)\in \Z^ 3$  
we define $\overline T_x(m,n)$ to be the thickened box
built from the quadrant opposite to the one $x$ belongs to in the face
$H_x^1\cap B(2N-m)$, that is
\begin{eqnarray*}
T^{+,+}_x(m,n) &\mbox{ if }& x_2\leq 0 \mbox { and }x_3\leq 0,\\
T^{+,-}_x(m,n) &\mbox{ if }& x_2\leq 0 \mbox { and }x_3> 0,\\
T^{-,+}_x(m,n) &\mbox{ if }& x_2> 0 \mbox { and }x_3\leq 0,\\
T^{-,-}_x(m,n) &\mbox{ if }& x_2> 0 \mbox { and }x_3> 0.
\end{eqnarray*}
\end{notation}
Thanks to  Lemma \ref{l2},  in the following lemma we construct
open paths starting 
in a seed inside $B(2N)$, and reaching  either  a seed 
inside $B_{6N{\rm e}_1}(2N)\cap B_{8N{\rm e}_1}(2N)$ or
 inside $B_{6N{\rm e}_2}(2N)\cap B_{8N{\rm e}_2}(2N)$.
For $i\in\{1,2\}$, the successive seeds in these open paths will have centers belonging  to the
 hyperplanes $H^i_{x+N{\rm e}_i},  H^i_{x+2N{\rm e}_i},
 H^i_{x+3N{\rm e}_i}$, $H^i_{x+4N{\rm e}_i},\ldots$; 
 these successive seeds will be respectively contained in  
 $B_{N{\rm e}_i}(2N)$, $B_{2N{\rm e}_i}(2N),
 B_{3N{\rm e}_i}(2N), B_{4N{\rm e}_i}(2N),\ldots$, 
%\newline
 and we 
 will stop as soon as we will get a seed in $B_{8N{\rm e}_i}(2N)$.
This construction will use a \textit{steering procedure} in which, at each stage,
the choice of a seed in $\overline T_.(m,n)$ compensates from an earlier deviation. 
\begin{lemma}\label{l3} Given $\lambda'>\lambda_c$, for any $\epsilon,\delta >0$ 
there exist  $n=n(\lambda',\epsilon,\delta),m=m(\lambda',\epsilon,\delta)$ and $N$  
satisfying \eqref{mnN},  such that for any $x\in B(2N-m)$,
\begin{eqnarray*}
P(C_x^i) \geq 1-8\epsilon, \qquad \mbox{ for }  i\in\{1,2\}
\end{eqnarray*}
where 
\begin{eqnarray*}
C_x^i&=&\big\{ \mbox{there exists a seed }B_{y}(m) \mbox{ contained in }B_{8N{\rm e}_i}(2N),
\mbox{ with }y_i\le 8N \\
&&%y\mbox{ to the left of } H_{8N{\rm e}_i}^i
\mbox{ and a path contained in }B(3N)  \cup B_{6N{\rm e}_i}(3N)\\
&&\mbox{ from }
B_x(m) \mbox{ to } B_{y}(m) \mbox { whose edges are all }(\lambda'+\delta)\mbox{-open and }\\
&&\mbox{those which are to the right of (resp. above) 
 the hyperplane }\\
&& H^i_{y-N{\rm e}_i} \mbox{ when $i=1$ (resp. $i=2$) are }\lambda'\mbox{-open}\big\}
\end{eqnarray*} 
\end{lemma}
\begin{figure}[htp]\label{fig:dessin_lemme1.7}
\centering
\input{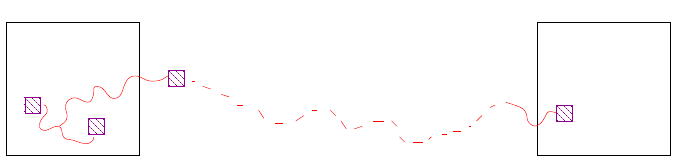_t}
\caption{Event $C_x^1$}
\end{figure}
\begin{proof}{Lemma}{l3}
We consider $C_x^1$. Since the model is invariant under $90$ degree rotations, 
the proof will also be valid for $C_x^2$.
 Let $V_1$ be the set of vertices of all paths starting at $B_x(m)$ 
 and  contained in $B_x(n)\cup \overline T_x(m,n)$
whose first edge is $(\lambda'+\delta)$-open and all the other edges are $\lambda'$-open.  
 Let
$$A^1_x=\{V_1\mbox{ contains a seed in } \overline T_x(m,n)\}.$$
Note that since $x\in B(2N-m)$  a path contained in $B_x(n)\cup \overline T_x(m,n)$ 
is also contained in $B(3N)$. Note also that 
the center of a seed in $\overline T_x(m,n) $ belongs to
 $H^1_{x+N{\rm e}_1}$, and that by our definition of $\overline T_x(m,n)$ (in Notation \ref{def:barTx}) 
 this seed is contained in $B_{N{\rm e}_1}(2N)$.
Thanks to  Lemma \ref{l2} with $L=B_x(m)$ and
$H$ the whole probability space (that is without conditioning), 
there exist $n,m$ such that 
\begin{equation}\label{PA1}
P(A^1_x)>1-\epsilon.
\end{equation}
 If $A^1_x$ occurs, of all the seeds in $\overline T_x(m,n)\cap V_1$ 
 we choose one according to some arbitrary deterministic order. 
 We now define a random variable  $Z_1$ as follows: on $A^1_x$, $Z_1$ 
 is the center of the chosen seed and on $(A^1_x)^c$,
 $Z_1=\Delta$  where $\Delta$ is an extra point we add to $\Z^3$. 
 Note that on $A^1_x$, $Z_1$ takes values in the hyperplane 
 $H^1_{x+N{\rm e}_1}$.  The random variable $Z_1$ is a function of $V_1$ 
 which we denote by $F_1$. We now wish to give a lower bound to the 
 conditional probability given $\{Z_1=z_1\}$ with 
 $z_1\neq \Delta $  that there is a path contained in
  $B_{z_1}(n)\cup \overline T_{z_1}(m,n)$ from 
 $V_1$ to a seed in $\overline T_{z_1}(m,n)$ and having the following properties:\par
\textit{(i)} all its bonds are $(\lambda'+\delta)$-open; \par
\textit{(ii)} all its bonds %the bonds of that path 
which are to the right of $H^1_{x+(N+m){\rm e}_1}$ are $\lambda'$-open.
\par\noindent
 Therefore to obtain the lower bound we let $L_1$ be a value of $V_1$ containing  
 a seed in $\overline T_x(m,n)$ and  consider the event:
\begin{eqnarray*}
A^1(x,L_1)&=&\big\{\mbox{there exist }v_1\in L_1\cap B_{F_1(L_1)}(n) \mbox{ and %there exists
 a path}\\
&&  \mbox{from   }v_1 \mbox{ to a seed in } \overline T_{F_1(L_1)}(m,n),
  \mbox{contained in } \\
&&  B_{F_1(L_1)}(n)\cup    \overline T_{F_1(L_1)}(m,n) \mbox{  whose edges are  all }(\lambda'+\delta) \mbox{-open}\\
&&   \mbox{and those to the right of }
H^ 1_{x+(N+m){\rm e}_1} \mbox{ are } \lambda'\mbox{-open}\big\}.
\end{eqnarray*}
The event $\{V_1=L_1\}$ is  $\sigma(L_1,\lambda')$-measurable (recall Notation \ref{sigma-fields}),
hence it follows from Lemma \ref{l2} that
\begin{equation}\label{PA1cond}
P(A^1(x,L_1)\vert V_1=L_1)\geq 1-\epsilon.
\end{equation}
Let $V_2$ be
the set of vertices of all the paths %contained 
 with the following properties: \par
\textit{(i)} they start from $B_x(m)$; \par
\textit{(ii)} they are contained in $B(3N)\cup B_{N{\rm e}_1}(3N)$ and 
lie entirely to the left of $H^1_{x+(2N+m){\rm e}_1}$; \par
\textit{(iii)} all their edges are $(\lambda'+\delta)$-open and  those 
to the right of $H^ 1_{x+(N+m){\rm e}_1}$ are $ \lambda' $-open. \par
We also define the event
$$A^2_x=\{V_2 \mbox{ contains a seed centered in }  H^1_{x+2N{\rm e}_1} \cap B_{2N{\rm e}_1}(2N-m)\}.$$
Noting that $A^2_x$ contains $A^1(x,L_1) \cap \{V_1=L_1\}$  for any $L_1$ containing a seed 
in $\overline T_x(m,n)$, summing over all such $L_1$'s we get by \eqref{PA1} and \eqref{PA1cond} 
\begin{eqnarray} 
P(A^2_x)&\geq& \sum_{L_1}  P(A^1(x,L_1)\vert V_1=L_1)P(V_1=L_1)\cr 
&\geq& \sum_{L_1} (1-\epsilon)P(V_1=L_1)=(1-\epsilon)P(A^1_x)\cr 
\label{eq:PA2-l3}
&\geq&(1-\epsilon)^2 \geq 1-2\epsilon.
 \end{eqnarray} 
Now we define a random variable $Z_2$ as follows: on the event $A^2_x$ among the seeds contained in 
$V_2$ and centered in  $H^1_{x+2N{\rm e}_1} \cap B_{2N{\rm e}_1}(2N-m)$
we choose one according to some arbitrary deterministic order and we let $Z_2 $ be its center. 
On  $(A^2_x)^c$  we let
$Z_2=\Delta $. Thus, $Z_2$ is a function $F_2$ of $V_2$. As before we let $L_2$ be a possible value of $V_2$ containing a seed 
centered in  $H^1_{x+2N{\rm e}_1} \cap B_{2N{\rm e}_1}(2N-m)$ and consider the event: 
\begin{eqnarray*}
A^2(x,L_2)&=&\big\{ \mbox{there exist }v_2\in L_2\cap B_{F_2(L_2)}(n) 
\mbox{ and a path from }v_2 \mbox{ to a} \\
&&\mbox{seed in } \overline T_{F_2(L_2)}(m,n),\mbox{ contained in }   B_{F_2(L_2)}(n)\cup    \overline T_{F_2(L_2)}(m,n)\\
&& \mbox{whose edges are  all }(\lambda'+\delta) \mbox{-open  and those to the right of }\\
&&H^ 1_{x+(2N+m){\rm e}_1} \mbox{ are } \lambda'\mbox{-open}\big\}. 
\end{eqnarray*}
The event  $\{V_2=L_2\}$ is $\sigma(F_2(L_2),L_2,\lambda',\delta)$-measurable, hence 
  it follows from Lemma \ref{l2} that
  $$P(A^2(x,L_2)\vert V_2=L_2)\geq 1-\epsilon. $$
We now let $V_3$ be the set of vertices belonging to all the paths with the following properties: \par
\textit{(i)} they start from $B_x(m)$; \par
\textit{(ii)} they are contained in $B(3N)\cup B_{N{\rm e}_1}(3N)$ and lie entirely to the left of $H^1_{x+(3N+m){\rm e}_1}$; \par
\textit{(iii)} all their edges are $(\lambda'+\delta)$-open and  those to the right of $H^ 1_{x+(2N+m){\rm e}_1}$ are $ \lambda' $-open. \par
 We also define the event:
$$A^3_x=\{V_3 \mbox{ contains a seed centered in }  H^1_{x+3N{\rm e}_1} \cap B_{3N{\rm e}_1}(2N-m)\}.$$
  Since $ A^3_x$ contains $A^2(x,L_2) \cap \{V_2=L_2\}$ we can argue  as before and get:
   $$P(A^3_x)\geq 1-3\epsilon .$$
The argument is then repeated until we reach a seed in $B_{8N{\rm e}_1}(2N)$. 
The total number of steps needed is at most $8$. 
Since at each step the probability is reduced by $\epsilon$, the lemma is proved.
\end{proof} 
\mbox{}\\ \\
Then define
\begin{equation}\label{eq:Cx}
C_x=C^1_x\cap C^2_x.
\end{equation}
{}From Lemma \ref{l3} we get:
\begin{corollary}\label{c1} 
Given $\lambda'>\lambda_c$, for any $\epsilon,\delta >0$ there exist $n,m,N$ 
satisfying \eqref{mnN} and such that for any 
$x\in B(2N-m)$ we have
$$P(C_x)\geq 1-16\epsilon.$$
\end{corollary}
 Next lemma fixes  the values of all the parameters introduced up to now.
\begin{lemma}\label{lsuppl} 
Assume $\lambda >\lambda_c$. Then, there exist constants
$m, N , K$ and $\iota>0$ such that for all $k$, 
\begin{eqnarray*}
&&P(\mbox{there exists a }\lambda\mbox{-open  path contained in }\\
&&[-3N,(3+8k)N]\times [-3N,(3+8k)N]\times [-3N,3N] \\
&&\mbox{from }B(m)\mbox{ to a seed in }  B_{8Nk{\rm e}_1+8Nk{\rm e}_2}(2N)\\
&&\mbox{whose number of edges is at most }2Kk) \geq \iota.
\end{eqnarray*} 
\end{lemma}
\begin{proof}{Lemma}{lsuppl}
We first fix $\epsilon>0$ small enough for the two dimensional oriented site percolation of parameter
 $1-16\epsilon$ to be supercritical. Then we take
$\lambda'> \lambda_c$ and $\delta >0$ such that $\lambda'+\delta<\lambda$. Finally
for those values of $\epsilon$, $\delta$ and $\lambda'$ we fix $n$, $m$ and  $N=n+m+1$
satisfying \eqref{mnN} and such that the conclusion of Corollary \ref{c1} is valid. \\ \\ 
We create a two dimensional oriented site percolation on $(\Z_+)^2$ associated to the percolation model
we already have. We will refer to this model as the ``renormalized model'', 
while the percolation model we already had on $\Z^3$  will be referred to 
as the ``original model''. On the renormalized model
all the paths are oriented upwards and towards the right; moreover, two subsequent sites 
of a path are at euclidean distance $1$. We now explain the way in which these models 
are associated. In the renormalized model 
site $(0,0)$ is always considered open, site $(0,1)$  is open (closed) if $C^1_0$ occurs 
(does not occur) in the original model. Similarly, $(1,0)$ is open (closed) if $C^2_0$ occurs 
(does not occur) in the original model.
Note that although the states of these last two sites $(0,1)$ and $(1,0)$ are dependent, 
by Corollary \ref{c1} they are both open with probability at least $1-16\epsilon$. 
We then proceed recursively 
as follows.\\ \\ 
At the $n$-th step we will look at the points in $\{(x,y)\in \Z^2_+: x+y=n-1\}$
which have been reached in the renormalized model from
 $(0,0)$ following open paths and order them according to their second coordinates. 
 We start from the point having the lowest second coordinate.
 Assume it is $(x_1, n-1-x_1)$. This point was reached from either 
 $(x_1-1, n-1-x_1)$ or $(x_1, n-2-x_1)$. In the first case, in the original model 
 a seed is reached in the left portion of  $B_{8Nx_1{\rm e}_1+8N(n-1-x_1){\rm e}_2}(2N)$  
(remember the description given before the statement of Lemma \ref{l3}). 
 Let  $z_1$ be the center of this seed. If  $C^1_{z_1}$ occurs 
 (does not occur) in the original model  we say that site $(x_1+1, n-1-x_1)$ in the renormalized model 
 is open (closed). And if  $C^2_{z_1}$ occurs 
(does not occur) in the original model we say that site $(x_1, n-x_1)$ is open (closed). Note that since 
 $z_1$ is in the left portion of $B_{8Nx_1{\rm e}_1+8N(n-1-x_1){\rm e}_2}(2N)$,  when we  
attempt to move upwards, the first seed we are seeking is centered  to the right of  $z_1$ due 
to our steering procedure, thus avoiding regions where we have already used $(\lambda'+\delta)$-open 
edges.
In the second case, the seed  reached in the original model (we again denote its center by $z_1$)
is in the lowest portion of $B_{8Nx_1{\rm e}_1+8N(n-1-x_1){\rm e}_2}(2N)$  and when we want 
 to establish if $C^1_{z_1}$ occurs we will be looking for paths reaching a seed whose center 
 is above $z_1$.  
  We then move to the second point in 
 $\{(x,y)\in \Z^2_+: x+y=n-1\}$ which has been reached in the renormalized model from
 $(0,0)$ following open paths. Let $(x_2,n-1-x_2)$ be that point and let $z_2$ be the 
 center of the seed located inside $B_{8Nx_2{\rm e}_1+8N(n-1-x_2){\rm e}_2}(2N)$ 
 which was reached in the original model following open paths starting at $B(m)$. 
 Two different cases arise: either $x_2=x_1-1$ or $x_2<x_1-1$. In the first case the point 
$(x_2+1,n-1-x_2)=(x_1,n-x_1)$ 
 has already been declared open or closed and remains in that state. 
 Then, we declare $(x_2,n-x_2)$ open  (closed)  if $C^2_{z_2}$ occurs (does not occur) in the original model.  
 In the second case (when $x_2<x_1-1$) we declare $(x_2+1,n-1-x_2)$ open (closed) if  
 $C^1_{z_2}$  occurs (does not occur) in the original model  and we declare $(x_2,n-x_2)$ 
 open (closed) if  $C^2_{z_2}$ occurs (does not occur) in the original model. 
 Then we go on.\\ \\ 
We now note that for all $n$ each  site examined in the set  %\newline  
$\{(x,y):x+y=n\}$ has probability bigger than $1-8\epsilon$ %$1-16\epsilon$
of being open and that such sites are dependent at most by pairs.  
This implies, as explained in the following lines,  that 
the open cluster of the origin is stochastically above the open cluster 
of an independent oriented site percolation model of parameter $1-16\epsilon$. \\ \\
For this, we again proceed by induction on $n$. We denote by $a_1,a_2,\ldots,a_k$ the points in the open
cluster of the origin that belong to $\{(x,y)\in \Z^2_+: x+y=n\}$. We assume that they are ordered according 
to their second coordinates.
Point $a_1$ has two neighbors $b_1,b_2$ on $\{(x,y)\in \Z^2_+: x+y=n+1\}$.
They are both open with probability at least $1-16 \epsilon$, which is stochastically larger
than if they were both independently open with probability $1-16 \epsilon$.
In other words, if a random vector $(Y_1,Y_2)$ with coordinates taking values in $\{0,1\}$
is such that $ P(Y_1=Y_2=1)\geq 1-16 \epsilon$, then the vector $(Y_1,Y_2)$ is stochastically larger
than the vector $(X_1,X_2)$ where $X_1$ and $X_2$ are independent Bernoulli r.v.'s of
parameter $1-16 \epsilon$. Going on, if
$a_2 =a_1+(-1,1)$, then we just have to consider the point $b_3=a_2+(0,1)$, because $a_2+(1,0)$ has already been
examined. This point $b_3$ will be open with probability at least $1-8\epsilon$ independently of what
happened with $b_1$ and $b_2$. Otherwise if $a_2$ is more distant from $a_1$ we have to examine $b_3= a_2+(1,0)$
and $b_4=a_2+(1,0)$: they will both be open with probability at least $1-16 \epsilon$ independently of what
happened with $b_1$ and $b_2$, and so on.
In the end, to each examined point on $\{(x,y)\in \Z^2_+: x+y=n+1\}$ 
is attached a r.v. with value 1 if it is open and 0 if it is closed. 
The r.v.'s thus obtained are stochastically larger
than a sequence of independent Bernoulli r.v.'s of
parameter $ 1-16 \epsilon$.
\\ \\ 
Thus, for  our choice of $\epsilon$ 
the renormalized model is supercritical and there exists a constant 
 $\iota >0$  such that $P((0,0)\rightarrow (k,k))\geq \iota$ for all $k\in \N$. 
Note also that the existence of an open oriented path from $(0,0)$ 
to $(k,k)$  (which has length $2k$) in the renormalized model  
implies the existence of a $(\lambda'+\delta)$-open path in the original model from $B(m)$ to some seed
in $B_{8Nk{\rm e}_1+8Nk{\rm e}_2}(2N)$ 
whose number of edges is bounded above by $2Kk$ where $K$ is some constant 
that depends on $N$ but not on $k$.  Indeed suppose that the point following $(0,0)$ in the path of
 the renormalized model is $(1,0)$. This means that there exists an open path in the original model 
 from a seed in $B(2N)$ to a seed in
$B_{8Ne_1}(2N)$.
This last path is not oriented, but being contained in
$B(3N) \cup B_{6N{\rm e}_1}(3N)$, it uses only edges in this set. The total number of
edges in the latter is a function of $N$ which does not depend on $k$, that we denote 
by $K(N)$. Hence the derived open path in the original model from a seed in $B(2N)$ 
to a seed in  $B_{8Nk{\rm e}_1+8Nk{\rm e}_2}(2N)$ has a  number of edges bounded by $2K(N)k$. 
\end{proof} 
\mbox{}\\ \\
For our next result we define the boxes:
$$\overline{B}_{i,j}=B_{(3+8i)N{\rm e}_1+(3+8j)N{\rm e}_2}(2N)$$
where $i$ and $j$ are non-negative integers.
\begin{corollary}\label{C1}
 Assume $\lambda >\lambda_c$. 
Let $N$ be as in the conclusion of Lemma \ref{lsuppl}. 
Then, there exist $\iota'>0$ and $K'\in \N$ such that: 
For any $k\in \N$ and any $0\leq i_1,i_2,j_1,j_2\leq k$  we have 
\begin{eqnarray*}
&&P(\mbox{there exists a }\lambda\mbox{-open  path contained in }\\
&&[0,(6+8k)N]\times [0,(6+8k)N]\times [-3N,3N] \mbox{ from }\overline{B}_{i_1,i_2}\mbox{ to  } \\
&& \overline{B}_{j_1,j_2} \mbox{  whose number of edges is at most } 2K'(\vert i_1-j_1\vert
 +\vert i_2-j_2\vert)) \geq \iota'.
\end{eqnarray*} 
\end{corollary}
\begin{proof}{Corollary}{C1} 
We wish to join  $\overline{B}_{i_1,i_2}$ to $\overline{B}_{j_1,j_2}$.  
Lemma \ref{lsuppl} enables to go from a box to another one along a  diagonal 
direction  issued from that box. Hence applying Lemma \ref{lsuppl} we get
\begin{eqnarray*}
&&P(\mbox{there exists a }\lambda\mbox{-open  path contained in }\\
&&[0,(6+8k)N]\times [0,(6+8k)N]\times [-3N,3N] \mbox{ from }\overline{B}_{i_1,i_2} \mbox{ to  } \\
&& \overline{B}_{i_1+r,i_2+r} \mbox{  whose number of edges is at most }   2K 
r) \geq \iota 
\end{eqnarray*}
for all $r \in \N$ such that $i_1+r, i_2+r \leq k$. Since the percolation model 
is invariant under 90 degree rotations, the same inequality holds if instead of  
adding $(r,r)$ to $(i_1,i_2)$ we add $(r,-r)$,$(-r,r)$ or $(-r,-r)$.  That is, instead of
going in one direction of one diagonal issued from $(i_1,i_2)$, we may take this diagonal 
in the other direction, or one direction of the other diagonal issued from $(i_1,i_2)$. 
This depends on the relative positions of $(i_1,i_2)$ and $(j_1,j_2)$ within the square
$[0,k]\times[0,k]$ to which they both belong. More precisely, from  $(i_1,i_2)$ and from $(j_1,j_2)$
 is issued a diagonal, and those two diagonals intersect within $[0,k]\times[0,k]$.
If this intersection point has integer coordinates, it can be written
$(i_1+r_1,i_2+r_2)$ as well as  $(j_1+\ell_1,j_2+\ell_2)$, with 
 $r_1=r_2$ or $r_1=-r_2$ (depending on which diagonal issued from
$(i_1,i_2)$ was used), and with  $\ell_1=\ell_2$ or $\ell_1=-\ell_2$ similarly.
If this intersection point does not have integer coordinates,
on each of the involved diagonals there is one point with integer coordinates,
with those two points at distance 1,
of the form $(i_1+r_1,i_2+r_2)$ and $(j_1+\ell_1, j_2+\ell_2)$, always with 
$r_1=r_2$ or $r_1=-r_2$, and $\ell_1=\ell_2$ or $\ell_1=-\ell_2$.
To summarize,  there exist integers $r_1,r_2,\ell_1,\ell_2$ with the following properties
\begin{enumerate}
\item $r_2=r_1$ or $r_2=-r_1$ and $\ell_2=\ell_1$ or $\ell_2=-\ell_1$;
\item $0\leq i_1+r_1,i_2+r_2,j_1+\ell_1,j_2+\ell_2\leq k$;
\item either %$(i_1+r_1,i_2+r_2)=(j_1+\ell_1,j_2+\ell_2)$
$\vert i_1+r_1-(j_1+\ell_1)\vert+\vert i_2+r_2-(j_2+\ell_2)\vert=0$ \newline or
 $\vert i_1+r_1-(j_1+\ell_1)\vert+\vert i_2+r_2-(j_2+\ell_2)\vert=1$;
\item $\vert r_1\vert +\vert \ell_1\vert \leq \vert i_1-j_1\vert +\vert i_2-j_2\vert$.
\end{enumerate}
\begin{figure}[htp]\label{fig:dessin_corollary1.10}
\centering
\input{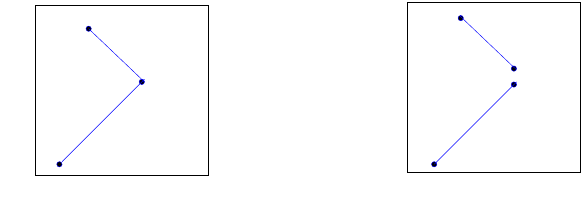_t}
\caption{Two possible cases}
\end{figure}
The corollary now follows from Lemma \ref{lsuppl}, the FKG inequality  (see Remark \ref{rk:FKG}) 
and the fact that the distance from a point in 
$\overline{B}_{i_1+r_1,i_2+r_2}$ to a point in $\overline{B}_{j_1+\ell_1,j_2+\ell_2}$ 
is bounded above by  $20N$. 
\end{proof}
\mbox{}\\ \\
\begin{proposition}\label{p1}
Suppose $\lambda >\lambda_c$. Then there exist constants $C, N$ and $\delta_1 >0$ such that

a) for all $M\geq 6N$,  $x,y\in [0,M]\times [0,M]\times [-3N,3N]$,
\begin{eqnarray*}
&&P\big(\mbox{there exists an open path from } x\mbox{ to }y\mbox{ contained in }\\
&&[0,M]\times [0,M]\times[-3N,3N]\mbox { with at most }C\|x-y\|_1 \mbox{ edges}\big)\geq \delta_1
\end{eqnarray*}
b) The original model is supercritical in a slab on thickness $k=6N$.
\end{proposition}
\begin{proof}{Proposition}{p1}
 It   follows from Lemma \ref{lsuppl} that the probability of having an open path of length $n$ 
 starting in $B(m)$ and contained
in the slab $ \Z \times \Z\times [-3N,3N] $ does not converge to $0$ as $n$ goes to infinity. 
This proves part\textit{ b)}. To prove part \textit{a)} consider the boxes
$B_{(3N+8Ni){\rm e}_1+(3N+8Nj){\rm e}_2}$ with $0\leq i,j\leq (\frac{M}{N}-6)\frac{1}{8}$. 
 Then, note that  for any point in $[0,M]\times [0,M]\times [-3N,3N]$ there is  such 
a box at distance at most $12N$. The result now follows from this, the FKG inequality 
(see Remark \ref{rk:FKG}) and Corollary \ref{C1}.
\end{proof}
\mbox{}\\ \\
We have now all the ingredients for the  proofs of 
Theorem \ref{prop:clusters_rentrant-sortant_infinis}, Lemma \ref{lem:connections},
and \eqref{eq:analogue_G-thm8.21} of Lemma \ref{lem:exp_decay_R}.\\ \\
\noindent
 \textit{Proof of Theorem \ref{prop:clusters_rentrant-sortant_infinis}}. 
Let $x$ and $y$ be two points in a slab of thickness $6N$. 
By Proposition \ref{p1}, the probability to have an open path from $x$ to $y$ in the slab
is larger than $\delta_1$. Therefore the probability for the outgoing cluster from $x$ in the slab to be
 infinite, as well as the probability for the incoming cluster
to $y$ in the slab  to be infinite,  is at least $\delta_1$.

Note that Proposition \ref{p1} gives more precise information, 
since it restricts the involved open paths to a part of the slab, 
and gives an upper bound on the lengths of the paths.
\hfill\mbox{$\square$}
\mbox{}\\ \\
\noindent
 \textit{Proof of Lemma \ref{lem:connections}}. 
For two points $x$ and $y$, the idea to build an open path from $x$ to $y$ is to combine paths in
different slabs using in each one Proposition \ref{p1},\textit{a)}.\\ \\
\textit{(i)} Let $\delta_1,M$ and $C$ be given by Proposition \ref{p1}, and let $k\ge M$. For $n>0$, 
let $x=(x_1,x_2,x_3)\in B_{n+k}\setminus B_n,
\ y=(y_1,y_2,y_3)\in (B_{n+k}\setminus B_n)\cup \Delta_v(B_{n+k}\setminus B_n )$. 
Assume for instance that $x_1<-n$, $n<y_1$, $-n<x_2<n$ and $-n<y_2<n$. 
Let $u,v\in B_{n+k}\setminus B_n$ with $-n<u_1,n<u_2$
and $n<v_1,v_2$.
By Proposition \ref{p1},\textit{a)} there exist with a probability larger than 
$\delta_1$ an open path from $x$ to $u$, as well as  from $u$ to $v$ and
from $v$ to $y$. 
By FKG inequality (see Remark \ref{rk:FKG}) there exists 
therefore with a probability larger than $\delta_1^3$ an open path from $x$ 
 to $y$. Since this particular case gives the maximal distance between $x$ and $y$,
$\delta=\delta_1^3$ enables us to conclude.\\ \\
\textit{(ii)} Let $n<m,\ x \in A(n,m,0),y\in A(n,m,0)\cup \Delta_v A(n,m,0)$. 
We proceed similarly to \textit{(i)}.
Assume for instance that $x_1<n,\,x_2<0$ and $m<y_1,\,y_2<0$. Let $u,v\in A(n,m,0)$ be such that
$u_1<n,\,0<u_2$ and $m<v_1,\,0<v_2$. By Proposition \ref{p1},\textit{a)} 
there exist with a probability larger than $\delta_1$ an open path from $x$ to $u$, 
as well as  from $u$ to $v$ and
from $v$ to $y$. We conclude with $\delta=\delta_1^3$ and $C_1=C$.\\ \\
 Note that  we have to add  $(-x_2)^+ +(-y_2)^+$
in part \textit{(ii)} of the lemma because if 
 $x\in \{z: -k+n\leq z_1<n, -\infty<z_2\leq 0\}$ and 
$y\in \{z:m<z_1\leq m+k, -\infty<z_2\leq 0\}$,  to move from $x$ to $y$ staying 
in  $A(n,m,0)$  we  need  to reach first the set 
$ \{z:-k+n\leq z_1\leq m+k, 0<z_2\leq k\}$ (i.e. to increase the second coordinate 
until it is positive).
\hfill\mbox{$\square$}
\mbox{}\\ \\
\noindent
 \textit{Proof of \eqref{eq:analogue_G-thm8.21} of Lemma \ref{lem:exp_decay_R}}. 
Relying on Proposition \ref{p1},\textit{b)}, we can
follow  the proof of \cite[Theorems (8.18), (8.21)]{MR1707339} to derive 
\eqref{eq:analogue_G-thm8.21}. 
\hfill\mbox{$\square$}
\end{appendix}
\mbox{}\\ \\
\noindent{\bf Acknowledgements.} We thank Geoffrey Grimmett 
for useful discussions.
 We thank referees for helpful comments and suggestions. 
This work was initiated during the semester
``Interacting Particle Systems, Statistical Mechanics and Probability 
Theory'' at CEB, IHP (Paris), whose hospitality is acknowledged. 
Part of this paper was written while  
E.A. was visiting IMPA, Rio de Janeiro and thanks are given for 
the hospitality encountered there. 
\bibliographystyle{alea3}
\bibliography{ACS-for-HAL-V4.bib}
\end{document}